\tikzset{node distance=2em, ch/.style={circle,draw,on chain,inner sep=2pt},chj/.style={ch,join},every path/.style={shorten >=4pt,shorten <=4pt},line width=1pt,baseline=-1ex}
\newtheorem{thm}{Theorem}
\newtheorem{lem}[thm]{Lemma}
\newtheorem{prop}[thm]{Proposition}
\newtheorem{conj}[thm]{Conjecture}
\newtheorem{cor}[thm]{Corollary}
\theoremstyle{remark}
\newtheorem{rem}[thm]{Remark}
\theoremstyle{definition}
\newcommand\myurl[1]{\url{#1}}
\newcommand{\nc}{\newcommand}
\nc{\ssec}{\subsection}
\nc{\on}{\operatorname}
\nc{\sE}{\mathscr{E}}
\nc{\sF}{\mathscr{F}}
\nc{\sL}{\mathscr{L}}
\nc{\sD}{\mathscr{D}}
\nc{\sA}{\mathscr{A}}
\nc{\cC}{\mathcal{C}}
\nc{\cG}{\mathcal{G}}
\nc{\cV}{\mathcal{V}}
\nc{\CB}{\mathcal{B}}
\nc{\cK}{{k(\!(s)\!)}}
\nc {\K}{\mathcal{K}}
\nc{\cE} {\mathcal{E}}
\nc{\Kl}{\mathrm{Kl}}
\nc{\cO}{\mathcal{O}}
\nc{\cF}{\mathcal{F}}
\nc{\cZ}{\mathcal{Z}}
\nc{\bcZ}{\overline{\mathcal{Z}}}
\nc{\bcB}{\overline{\mathcal{B}}}
\nc{\cD}{\mathcal{D}}
\nc{\cDt}{\mathcal{D}^\times}
\nc{\cH}{\mathcal{H}}
\nc{\bZ}{\mathbb{Z}}
\nc{\bQ}{\mathbb{Q}}
\nc{\bR}{\mathbb{R}}
\nc{\bC}{\mathbb{C}}
\nc{\bQl}{\overline{\mathbb{Q}}_\ell}
\nc{\bQlt}{\bQl^\times} 
\nc{\FG}{\mathrm{FG}}
\nc{\dR}{\mathrm{dR}}
\nc{\uG}{\underline{G}}
\nc{\uc}{\underline{c}}
\nc{\uu}{\underline{u}}
\nc{\cU}{\mathcal{U}}
\nc{\rat}{\mathrm{rat}}
\nc{\Hyp}{\mathrm{Hyp}}
\nc{\Lie}{\mathrm{Lie}}
\nc{\ctheta}{\check{\theta}}
\nc{\nil}{\mathrm{nil}}
\nc{\fF}{\mathfrak{F}}
\nc{\fB}{\mathfrak{B}}
\nc{\fZ}{\mathfrak{Z}}
\nc{\fx}{\mathfrak{x}}
\nc{\fy}{\mathfrak{y}}
\nc{\fb}{\mathfrak{b}}
\nc{\fk}{\mathfrak{k}}
\nc{\fI}{\mathfrak{i}}
\nc{\fj}{\mathfrak{j}}
\nc{\fg}{\mathfrak{g}}
\nc{\fu}{\mathfrak{u}}
\nc{\fl}{\mathfrak{l}}
\nc{\fn}{\mathfrak{n}}
\nc{\cP}{\mathcal{P}}
\nc{\ft}{\mathfrak{t}}
\nc{\fz}{\mathfrak{z}}
\nc{\fc}{\mathfrak{c}}
\nc{\cfc}{\check{\mathfrak{c}}}
\nc{\fh}{\mathfrak{h}}
\nc{\fp}{\mathfrak{p}}
\nc{\bone}{\mathbf{1}}
\nc{\tg}{\mathtt{g}}
\nc{\hfg}{\widehat{\fg}}
\nc{\ch}{\check{\fh}}
\nc{\hP}{\hat{P}}
\nc{\hg}{\widehat{\mathfrak{g}}}
\nc{\gO}{\mathfrak{g}[\![t]\!]}
\nc{\Ug}{\widehat{U}(\mathfrak{g})}
\nc{\dl}{/\!\!/}
\nc{\bGm}{\mathbb{G}_m}
\nc{\bGa}{\mathbb{G}_a}
\nc{\bL}{\mathbf{L}}
\nc{\bK}{\mathbf{K}}
\nc{\bJ}{\mathbf{J}}
\nc{\bI}{\mathbf{I}}
\nc{\bV}{\mathbb{V}}
\nc{\bM}{\mathbb{M}}
\nc{\bP}{\mathbb{P}}
\nc{\bA}{\mathbb{A}}
\nc{\bN}{\mathbb{N}}
\nc {\Q}{\mathrm{Q}}
\nc{\diag}{\mathrm{diag}}
\nc{\diff}{\mathrm{diff}}
\nc{\ev}{\mathrm{ev}}
\nc{\Res}{\mathrm{Res}}
\nc{\Fl}{\mathcal{F}\ell}
\nc{\Ad}{\mathrm{Ad}}
\nc{\ad}{\mathrm{ad}}
\nc{\pr}{\mathrm{pr}}
\nc{\Sl}{\mathfrak{sl}}
\nc{\gl}{\mathfrak{gl}}
\nc{\ra}{\rightarrow}
\nc{\tra}{\twoheadrightarrow}
\nc{\hra}{\hookrightarrow}
\nc{\quo}{\mathopen{ /\!/}}
\nc{\GL}{\mathrm{GL}}
\nc{\SL}{\mathrm{SL}}
\nc{\Sp}{\mathrm{Sp}}
\nc{\SO}{\mathrm{SO}}
\nc{\so}{\mathfrak{so}}
\nc{\PGL}{\mathrm{PGL}}
\nc{\Bun}{\mathrm{Bun}}
\nc{\supp}{\mathrm{supp}}
\nc{\bgamma}{\bar{\gamma}}
\nc{\I}{\mathrm{I}}
\nc{\II}{\mathrm{II}}
\nc{\III}{\mathrm{III}}
\nc{\ab}{\mathrm{ab}}
\nc{\td}{\mathrm{d}}
\nc{\Ht}{\mathrm{ht}}
\nc         {\rar}[1]       {\stackrel{#1}{\longrightarrow}}
\nc{\fa}{\mathfrak{a}}
\nc{\Hit}{\mathrm{Hit}}
\nc{\RS}{\mathrm{RS}}
\nc{\Loc}{\mathrm{Loc}}
\nc{\tLoc}{\widetilde{\mathrm{Loc}}}
\nc{\reg}{\mathrm{reg}}
\nc{\im}{\mathrm{Im}}
\nc{\tp}{\mathfrak{p}}
\nc{\cA}{\mathcal{A}}
\nc{\cY}{\mathcal{Y}}
\nc{\opp}{\mathrm{opp}}
\nc{\Ind}{\mathrm{Ind}}
\nc{\sAn}{\mathrm{can}}
\nc{\Vac}{\mathrm{Vac}}
\nc{\Op}{\mathrm{Op}}
\nc{\Lg}{\check{\fg}}
\nc{\cDelta}{\check{\Delta}}
\nc{\cPhi}{\check{\Phi}}
\nc{\LV}{\check{V}}
\nc{\Lh}{\check{h}}
\nc{\LG}{\check{G}}
\nc{\cT}{\check{T}}
\nc{\ct}{\check{\ft}}
\nc{\cB}{\check{B}}
\nc{\cb}{\check{\fb}}
\nc{\cN}{\check{N}}
\nc{\cn}{\check{\fn}}
\nc{\Spec}{\mathrm{Spec}}
\nc{\End}{\mathrm{End}}
\nc{\crho}{\check{\rho}}
\nc{\clambda}{\check{\lambda}}
\nc{\rX}{\mathring{X}}
\nc{\ru}{\mathring{u}}
\nc{\sW}{\mathscr{W}}
\nc{\sH}{\mathscr{H}}
\nc{\sV}{\mathscr{V}}
\nc{\geom}{\mathrm{geom}}
\nc{\Irr}{\mathrm{Irr}}
\nc{\fm}{\mathfrak{m}}
\nc{\aff}{\mathrm{aff}}
\nc{\Aut}{\mathrm{Aut}}
\nc{\cJ}{\mathcal{J}}
\nc{\fs}{\mathfrak{s}}
\nc{\Stab}{\mathrm{Stab}}
\nc{\st}{\mathrm{st}}
\nc{\tw}{{\widetilde{w}}}
\nc{\gen}{\mathrm{gen}}
\nc{\genn}{\mathrm{genn}}
\nc{\sss}{\mathrm{ss}}
\nc{\fsp}{\mathfrak{sp}}
\nc{\Hom}{\mathrm{Hom}}
\nc{\bm}{\mathbf{m}}
\nc{\HG}{\mathcal{HG}}
\nc{\Gal}{\mathrm{Gal}}
\nc{\Sym}{\mathrm{Sym}}
\nc{\rank}{\mathrm{rank}}
\nc{\tP}{\mathtt{P}}
\nc{\tL}{\mathtt{L}}
\nc{\tU}{\mathtt{U}}
\nc{\tW}{\widetilde{W}}
\nc{\Hk}{\on{Hk}}
\nc{\cL}{\mathcal{L}}
\nc{\talpha}{\widetilde{\alpha}}
\nc{\tQ}{{\widetilde{Q}}}
\nc{\ochi}{\overline{\chi}}
\nc{\tdelta}{\widetilde{\Delta}}
\nc{\wt}{\mathrm{wt}}
\nc{\fQ}{\mathfrak{Q}}
\nc{\Rep}{\mathrm{Rep}}
\nc{\Conn}{\mathrm{Conn}}
\nc{\Hecke}{\mathrm{Hecke}}
\nc{\Gr}{\mathrm{Gr}}
\nc{\GR}{\mathrm{GR}}
\nc{\IC}{\mathrm{IC}}
\nc{\Std}{\mathrm{Std}} 
\nc{\Db}{\mathrm{D}^{\mathrm{b}}}
\nc{\tr}{\mathrm{tr}}
\nc{\gr}{\mathrm{gr}}
\nc{\tmin}{\mathrm{min}}
\nc{\Fun}{\mathrm{Fun}~}
\newcommand{\quash}[1]{}
\begin{document} 
	\renewcommand{\thepart}{\Roman{part}}
	
	\renewcommand{\partname}{\hspace*{20mm} Part}
	
	\subjclass[2020]{14D24, 20G25, 22E50, 22E67}
	
	\begin{abstract}
		From a stable vector of a stable grading on a simple Lie algebra,
		Yun defined a rigid automorphic datum 
		that encodes an epipelagic representation, 
		and also an irregular connection on the projective line called $\theta$-connection. 
		We show that under geometric Langlands correspondence,
		$\theta$-connection corresponds to the Hecke eigensheaf 
		attached to the rigid automorphic datum,
		assuming the stable grading is inner and its Kac coordinate $s_0$ is positive.  
		We provide applications of the main result 
		on cohomological rigidity of 
		$\theta$-connections, global oper structures, 
		and a de Rham analog of Reeder-Yu's predictions
		on epipelagic Langlands parameters.
	\end{abstract}

	\title{Geometric Langlands for irregular Theta Connections and Epipelagic Representations}
	\author{Tsao-Hsien Chen, Lingfei Yi} 
	\date{\today} 
	\maketitle
	
	\tableofcontents
	
	\section{Introduction}
	Supercuspidal representations are building blocks of the admissible representations of $p$-adic groups.
	Their construction has been a central problem in the area.
	A particular family of supercuspidal representations of depth $\frac{1}{m}$
	for some special numbers $m$
	has been constructed by Reeder-Yu \cite{RY},
	named as \emph{epipelagic representations}.
	The word \emph{epipelagic} indicates that they are among those
	of minimal positive depth.
	In the case of function fields, 
	Yun constructed a family of Hecke eigensheaves
	over a punctured projective line
	that encodes epipelagic representations at a place \cite{YunEpipelagic}.
	In an unpublished work, 
	he conjectured that in the de Rham setting, 
	the Hecke eigenvalues of these eigensheaves should be given by
	the so-called $\theta$-connections, 
	whose properties have been studied by the first author in \cite{Chen}.
	
	In this work we prove Yun's conjecture under some assumptions.
	This also provides a new family of explicit geometric Langlands correspondence
	that allows irregular singularities.
	We first briefly review the above objects in the following.
	\subsection{Epipelagic representations and $\theta$-connections}
	\subsubsection{}
	Let $G$ be a split connected simple and simply connected algebraic group 
	over a local field $K$ with residue field $k$.
	Consider a parahoric subgroup $P\subset G(K)$ and its Moy-Prasad filtration
	$P\supset P(1)\supset P(2)\supset\cdots$.
	The first quotient $L_P:=P/P(1)$ is a reductive group,
	while the other subquotients are abelian and unipotent.
	Let $V_P:=P(1)/P(2)$, on which $L_P$ has an adjoint action.
	If a dual element $\phi\in V_P^*$ has 
	closed orbit and finite stabilizer under the $L_P$ action,
	then we say $\phi$ is \emph{stable}.
	Let $\psi$ be an additive character on $k$,
	then $\psi\circ\phi$ gives a character on $P(1)$ by inflation.
	Reeder and Yu showed that 
	the compactly induced representation $\mathrm{ind}^{G(K)}_{P(1)}(\psi\circ\phi)$
	is a direct sum of irreducible supercuspidal representations 
	\cite[Proposition 5.2]{RY},
	and these irreducible summands are called \emph{epipelagic representations}.
	
	An important observation is that stable vectors in $V_P^*$ are closely related
	to stable gradings of the Lie algebra $\fg/k$ of $G$.
	In this article, we will restrict to the case of inner gradings.
	If the parahoric subgroup $P$ is defined from 
	a rational point $x$ in the Bruhat-Tits building,
	let $m$ be the smallest positive integer such that
	$\clambda:=mx$ is a cocharacter.
	Then $\Ad_{\clambda(\zeta_m)}$ defines an order $m$ grading
	$\fg=\bigoplus_{i\in\bZ_m}\fg_i$.
	Let $G_0\subset G$ be the subgroup with Lie algebra $\fg_0$,
	which acts adjointly on $\fg_i$.
	There exists an isomorphism $(G_0,\fg_1)\simeq(L_P,V_P)$ \cite[Theorem 4.1]{RY}
	that preserves the adjoint actions.
	Therefore, stable elements $\phi\in V_P^*$
	are equivalent to elements in $\fg_1^*\simeq\fg_{-1}$
	that are stable with respect to the action of $G_0$.
	The graded Lie algebras and $G_0$-orbits in $\fg_1$ 
	have been studied by Vinberg \cite{Vinberg}.
	
	\subsubsection{}
	When $K$ has positive characteristic,
	i.e. it is the field of Laurent series over a finite field,
	Yun \cite{YunEpipelagic} constructed a globalization of 
	epipelagic representations
	using the method of \emph{rigid automorphic data}.
	Explicitly, let $X=\bP^1/k$ be the projective line over finite field $k$.
	Let $F=k(t)$ be the function field of $X$, with ring of adeles $\bA_F$.
	Let $P$ be a parahoric subgroup of $G(k(\!(t^{-1})\!))$
	that has a stable functional $\phi\in V_P^*$.
	Let $P^\opp\subset G(k(\!(t)\!))$ be a parahoric subgroup 
	that is opposite to $P$.
	Yun showed that there exists a unique up to scalar 
	nonzero automorphic form on $G(\bA_F)$
	that is unramified over $\bP^1-\{0,\infty\}$,
	fixed by $P^\opp$,
	and is $(P(1),\psi\circ\phi)$-equivariant.
	Such a pair $((P^\opp,\bone),(P(1),\psi\circ\phi))$
	is called an \emph{epipelagic automorphic datum}.
	Moreover, such a form will automatically be a Hecke eigenform.
	
	We can lift this form to a Hecke eigensheaf $\cA_\phi$ 
	on the moduli stack of $G$-bundles over $\bP^1$
	with level structures at $0,\infty$ associated to $P^\opp$ and $P(2)$,
	see \S\ref{ss:epipelagic} for details.
	Then under geometric Langlands correspondence,
	the Hecke eigenvalues of $\cA_\phi$
	give rise to a $\LG$-local system $\cE_\phi$ on $\bP^1-\{0,\infty\}$,
	where $\LG$ is the dual group of $G$.
	In particular, when $P=I$ is the Iwahori subgroup,
	the above construction recovers the construction in \cite{HNY}.
	
	\subsubsection{}
	When $k=\bC$, we have a de Rham analog of the above global construction,
	replacing all the sheaves with $D$-modules.
	Then the Hecke eigen $D$-module $\cA_\phi$
	gives rise to a de Rham $\LG$-local system $\cE_\phi$,
	i.e. a meromorphic connection 
	on the trivial principal $\LG$-bundle over $\bP^1$
	with singularities at $0,\infty$.
	When $P=I$,
	Zhu \cite{Zhu} proved that $\cE_\phi$ coincides with 
	the irregular connection constructed by Frenkel and Gross in \cite{FGr},
	which is now called the \emph{Frenkel-Gross connection}.
	
	\subsubsection{}
	In an unpublished work, Yun constructed 
	a generalization of Frenkel-Gross connections
	from a stable grading of the Lie algebra $\Lg$ of $\LG$
	and a stable vector $X$ in the degree one subspace $\Lg_1$,
	which is called a \emph{$\theta$-connection} $\nabla^X$, 
	cf. \cite{Chen} or \S\ref{ss:theta conn} below.
	In \cite{Chen}, Chen proved under a condition on the stable grading
	that these connections are \emph{cohomologically rigid} \cite[Theorem 5.2]{Chen}.
	The constraint on the stable grading is that 
	the associated Kac coordinate $s_0$ is nonzero.

	\subsection{The geometric Langlands correspondence between $\theta$-connections and epipelagic automorphic datum}
	There exists a bijection 
	between the stable gradings of $\fg$ and its dual Lie algebra $\Lg$
	such that the stable orbits in $V_P^*$ and $\Lg_1$
	can be matched bijectively (see Lemma \ref{l:GIT isom}):
	\begin{equation}\label{bijection}
		V_P^{*,\st}/L_P\simeq\Lg_1^\st/\LG_0.
	\end{equation} 
	Then it is conjectured in \cite[Conjecture 1.1]{Chen},
	originally by Yun, 	
	that in the de Rham setting,
	the eigenvalue $\cE_\phi$ coming from an epipelagic automorphic datum
	is the same as the $\theta$-connection defined from the corresponding
	stable grading and stable orbit.
	When $P=I$, this is the main result of \cite{Zhu}.
	
	Our main result confirms the conjectural correspondence  between
	epipelagic automorphic data and $\theta$-connections
	for a large family of stable gradings:
	\begin{thm}(Theorem \ref{t:main})\label{t: main thm intro}
		Assume both stable inner gradings of order $m$ on $\Lg$ and $\fg$
		have Kac coordinate $s_0>0$.
		For a stable vector $X$ and the corresponding stable functional $\phi$,
		$\nabla^X$ is isomorphic to the Hecke eigenvalue $\cE_\phi$.
	\end{thm}
	
	A stable inner grading is uniquely determined by the order $m$.
	According to the tables in \cite[\S7]{RLYG} classifying stable gradings, 
	the order $m$ of the stable inner gradings on $\fg$
	considered in Theorem \ref{t: main thm intro} are as follows:
	\begin{equation}\label{eq:inner types both s_0>0}
		\begin{cases}
			A_n,\qquad m=n+1,\\
			B_n,\qquad m=n\ \text{even or}\ m=2n,\\
			C_n,\qquad m=n\ \text{even or}\ m=2n,\\
			D_n,\qquad m=n\ \text{even or}\ m=2n-2,\\
			E_6,\qquad m=6,9,12,\\
			E_7,\qquad m=6,14,18,\\
			E_8,\qquad m=6,10,12,15,20,24,30,\\
			F_4,\qquad m=4,6,8,12,\\
			G_2,\qquad m=3,6.
		\end{cases}
	\end{equation}

	\begin{rem}
	\begin{itemize}
    	\item [(i)]
		When $m=h$ is the Coxeter number of $G$, 
		where $P=I$ is the Iwahori subgroup, 
		$\nabla^X$ recovers the Frenkel-Gross connection.
		In this case the theorem is known in \cite{Zhu}.

		\item [(ii)]
	    We expect that Theorem \ref{t: main thm intro} is true for all
	    stable inner gradings on $\Lg$ with $s_0>0$, 
	    where the stable inner grading on $\fg$ of the same order
	    may have $s_0=0$. 
	    According to the tables in \cite[\S7]{RLYG}
	    the only missing case is $(\fg=\so_{2n+1},\Lg=\fsp_{2n})$.
	    We provide evidence in Lemma \ref{l:Lg=sp X_1 in u_P}.

		\item [(iii)]		
		In the case of stable gradings
	    on $\Lg$ with  $s_0=0$, 
	    we discover that the $\theta$-connection $\nabla^X$ might not 
	    have the same monodromy as that of $\mathcal E_\phi$ at $0$, 
	    see \S\ref{sss:tame monodromy counterexamples}. 
	    It turns out that our proof of Theorem \ref{t: main thm intro}
	    also works when $\nabla^X$ does have the same local monodromy 
	    as $\cE_\phi$ at $0$.
	    In \S\ref{s:conj for s_0=0}, we propose a version of
	    geometric Langlands correspondence  for epipelagic automorphic data and $\theta$-connections with $s_0=0$
	    and provide an example when $\Lg=\so_7$
	    in \S\ref{ss:eg of min orbit conj}.
	    
	    \item [(iv)]
	    By Corollary \ref{c:theta conn 0 monodromy}.(i),
	    the $\theta$-connections considered in the above theorem
	    belong to \emph{generalized Kloosterman connections}
	    studied in the work of Hohl and Jakob \cite{HJrigid}.
	    They prove the physical rigidity of generalized Kloosterman connections
	    in the Theorem 1.2.1 of \emph{loc. cit.} using Stokes data.
	\end{itemize}
	\end{rem}
	
	In Remark \ref{rem:s_0}, we will have a more detailed discussion 
	on the necessity of imposing the assumptions on $s_0$
	and situations where they can be relaxed.

	\subsection{Idea of proof}\label{ss:idea of proof}
	Theorem \ref{t: main thm intro} generalizes 
	Zhu's result for Frenkel-Gross connections.
	However, many arguments in \cite{Zhu} 
	cannot be generalized to $\theta$-connections.
	The issue is that the proof in \emph{loc. cit.} made use of 
	the obvious global oper structure of Frenkel-Gross connection
	and the flatness of the relevant Hitchin map.
	Here the oper structure on a connection is what makes it possible
	to apply Beilinson-Drinfeld's construction of the Hecke eigensheaf
	of the connection and the flatness would ensure that the Hecke eigensheaf is non-zero.
	
	Roughly speaking, 
	to prove the correspondence between $\nabla^X$ 
	and $\cE_\phi$ in Theorem \ref{t: main thm intro},
	we instead begin with a global oper on $\bGm$
	that has the same local monodromy at $\infty$ as $\nabla^X$
	and has a particular form of equation at $0$.
	We construct a non-zero Hecke eigensheaf of it that is 
	$(P(1),\cL_\phi)$-equivariant.
	By the rigidity of the epipelagic automorphic datum, 
	this must be Hecke-eigen with the same eigenvalue as $\cA_\phi$.
	Thus the underlying connection of the oper 
	must coincide with the Hecke eigenvalue $\cE_\phi$.
	Lastly, such oper will automatically have the same local monodromy as $\nabla^X$.
	It follows from the physical rigidity theorem of Jakob-Hohl that 
	$\nabla^X$ is the eigenvalue of $\cA_\phi$.
	
	A main difficulty is that the space of global opers we need is complicated.
	In fact, it is even non-obvious whether there exists such global opers.
	Besides, the Hitchin map in our general setting is not always flat 
	because the moduli stack of $G$-bundles with corresponding level structures
	is not a \emph{good stack}, c.f. \S\ref{ss:moment not flat}.
	Thus we cannot construct the Hecke eigensheaf exactly as in \cite{Zhu}.
	Also, in our situation the local monodromies of $\theta$-connections
	are more complicated than Frenkel-Gross connections.
	To overcome these difficulties, 
	we compute central support of parahoric induced representations of
	affine Kac-Moody algebras, 
	compute the parahoric local Hitchin image,
	compare the local monodromy of $\theta$-connections at $0$
	with the nilpotent conjugacy class associated to
	the parahoric subgroup $P$ in \S\ref{s:mon at 0},
	and study their irregular singularity at $\infty$ in \S\ref{s:theta conn local monodromy at infty}.

	\subsection{Applications}
	From Theorem \ref{t: main thm intro},
	we can deduce corollaries on the epipelagic Langlands parameters.
	In \cite[\S7.1]{RY}, Reeder and Yu made predictions 
	on the Langlands parameters of epipelagic representations.
	In \S\ref{s:applications on epipelagic L param} we confirm the de Rham analog 
	of their predictions as a corollary of our global result.
	We also deduce the de Rham version of some results in \cite{FuGuEuler}
	on Euler characteristics of generalized Kloosterman sheaves 
	for classical groups with respect to the standard representation.
	In a recent work of the second author with Daxin Xu \cite{XuYi},
	the above results are used to study the 
	positive characteristic local and global epipelagic Langlands parameters.

	\subsection{Future directions}
	\subsubsection{}
	The construction of epipelagic representations in \cite{RY} 
	does not require the group $G$ to be split and simply connected:
	they only need $G$ to be a connected reductive group
	that splits over a tamely ramified extension of the local field.
	Also, in \cite{YunEpipelagic} $G$ is just a quasi-split reductive group.
	Besides, $\theta$-connections can be constructed from stable gradings
	that are not necessarily inner.
	Thus the correspondence between epipelagic automorphic data
	and $\theta$-connections can be naturally extended to quasi-split groups
	and general stable gradings.
	However, the notion of opers, 
	the Feigin-Frenkel isomorphism,
	and the construction of Hecke eigensheaves 
	using Beilinson-Drinfeld's localization method
	have not been developed for quasi-split groups and twisted Lie algebras.
	Thus our method cannot yet be applied to the twisted case.
	But it would be interesting to explore whether the twisted correspondence 
	can be associated to the untwisted case
	by going to a covering of the curve. 
	
	\subsubsection{}
	Another assumption we made is that we require the stable inner grading 
	has Kac coordinate $s_0>0$.
	On the automorphic side 
	this corresponds to that the parahoric subgroup $P$ 
	is contained in the hyperspecial subgroup $G(\cO)$.
	One reason is that the construction of $\theta$-connections 
	needs some modifications:
	when $s_0=0$, there are some stable vectors $X$ such that
	the monodromy at $0$ of the associated $\theta$-connection $\nabla^X$
	does not match with that of the eigenvalue $\cE_\phi$ 
	of the epipelagic Hecke eigensheaf, i.e. the class $\uu_P$.
	Therefore they cannot be isomorphic.
	It is to be explored what should be the correct statement for $s_0=0$,
	on which we provide some discussion in \S\ref{s:conj for s_0=0}.
	
	\subsubsection{}
	In \cite{KSRigid}, Kamgarpour and Sage constructed a collection of
	cohomological rigid connections that generalize Frenkel-Gross connection,
	but do not cover other $\theta$-connections.
	One family in their construction generalizes Airy equation from
	general linear groups to simple algebraic groups.
	Meanwhile, a family of rigid automorphic data has been constructed
	in \cite{JKY} for any simple reductive group
	that corresponds to Airy local systems for general linear groups.
	In \cite{YiToral}, the second author proves that 
	the above two families match under geometric Langlands.
	The strategy is similar as the current paper, 
	based on a newly discovered local geometric Langlands correspondence
	between irreducible isoclinic connections
	and toral supercuspidal representations.
	
	More generally, all the aforementioned connections belong to the family
	considered in \cite{JYDeligneSimpson}.
	We hope to extend our method to this generality in the future.

	\subsubsection{}\label{sss:pos applications}
	The proof in \cite{XuZhu} is applicable in our case,
	giving a Frobenius structure on $p$-adic $\theta$-connections,
	whose eigenvalues would correspond to the $\ell$-adic epipelagic eigenvalues.
	In a joint work of the second author with Daxin Xu \cite{XuYi},
	based on Theorem \ref{t: main thm intro},
	they prove that the Langlands parameters of epipelagic representations
	have the properties predicted by Reeder-Yu \cite{RY},
	which implies the cohomological rigidity of
	the associated $\ell$-adic $\LG$-local systems,
	see \cite[Theorem 1.3.2]{XuYi}.
	They also show that the corresponding 
	overconvergent $F$-isocrystals for $\LG$ are physically rigid
	\cite[Theorem 1.3.9]{XuYi}.

	\subsection{Organization of the article}
	In \S\ref{s:set up},
	we set up notations, 
	take a more detailed review of our main players: 
	epipelagic automorphic data and $\theta$-connections,
	and give a precise statement of the Langlands correspondence between them,
	see Lemma \ref{l:GIT isom} and Theorem \ref{t:main}.
	In \S3 we develop some properties about stable vectors in a graded Lie algebra
	needed in the proof of the main theorem, 
	which are of independent interest.
	In \S\ref{s:theta conn local monodromy at infty} 
	and \S\ref{s:local opers with slope 1/m},
	we first analyze the local monodromies of $\theta$-connections
	at the irregular singularity,
	then we discuss their possible local oper structures.
	In \S\ref{s:global opers},
	we compute the space of global opers we need to apply
	the localization method.
	In \S\ref{s:proof of main thm}, we prove the main result Theorem \ref{t:main}.
	In \S\ref{s:applications on epipelagic L param}, 
	we give applications on the de Rham Langlands parameters.
	In \S\ref{s:conj for s_0=0}, we discuss a potential correction to the
	Theorem \ref{t:main} in the $s_0=0$ case.

	\subsection{Acknowledgment}
	The authors thank Gurbir Dhillon,
	Joakim Færgeman, Sam Raskin, and Daxin Xu
	for useful discussions. 
	T.-H. Chen also thanks the NCTS-National Center for Theoretical Sciences at Taipei where parts of this work were done. 
	The research of T.-H. Chen is
	supported by NSF grant DMS-2143722.
	L. Yi is supported by Start-Up Grant No. JIH1414042Y of Fudan University.

	\section{Set up}\label{s:set up}
	Let $G$ be a simple, simply-connected complex algebraic group. 
	Denote the Lie algebra of $G$ by $\fg$. 
	Let $\bP^1/\bC$ be the projective line, 
	$t$ a coordinate at $0$, 
	$s=t^{-1}$ a coordinate at $\infty$,
	function field $F=\bC(t)$. 
	Denote the formal disk and punctured formal disk at $x\in\bP^1$ by
	$D_x,D_x^\times$,
	denote the local field and ring of integers at $x$ by $F_x,\cO_x$. 
	Let $\LG$ be the dual group of $G$, with Lie algebra $\Lg$.
	
	\subsection{Epipelagic rigid automorphic data}\label{ss:epipelagic}
	Consider a standard parahoric subgroup $P$ of $G(\!(s)\!)$. 
	Let $P(1),P(2)$ be the first two Moy-Prasad subgroups of $P$, 
	and let $L_P\subset P$ be the natural lift of the Levi quotient $L_P\simeq P/P(1)$. 
	Denote $V_P=P(1)/P(2)$. 
	A parahoric subgroup $P$ is called \emph{admissible} 
	if $V_P^*$ contains a nonzero element 
	with closed $L_P$-orbit and finite stabilizer in $L_P$.
	Such a parahoric subgroup uniquely corresponds to 
	a regular elliptic number $m$, c.f. \cite[\S2.6]{YunEpipelagic}.  
	Let $V_P^{*,\st}$ be the locus of stable linear forms, 
	and take arbitrary $\phi\in V_P^{*,\st}$. 
	Let $P^\opp\subset G(\!(t)\!)$ be the parahoric subgroup opposite to $P$,
	$\bone$ be the trivial connection on $P^\opp$.
	Denote by $\cL_\phi$ the pullback of the exponential $D$-module to $P(1)$
	via quotient $P(1)\rightarrow P(1)/P(2)$ and $\phi$.
	We call the pair $(P^\opp,\bone;P(1),\cL_\phi)$
	an \emph{epipelagic automorphic datum}.
	
	Let $\cG$ be the group scheme on $\bP^1$ such that 
	$\cG|_{D_0}=P^\opp$, $\cG|_{D_\infty}=P(2)$, $\cG|_{\bGm}=G\times\bGm$.
	Let $\Bun_\cG$ be the moduli stack of $\cG$-bundles on $\bP^1$.
	In \cite[Proposition 2.11, \S 3]{YunEpipelagic}, 
	Yun showed that any epipelagic automorphic datum is \emph{rigid},
	i.e. there exists a unique irreducible holonomic $D$-module $\cA_\phi$ on $\Bun_{\cG}$ that is $(P(1),\cL_\phi)$-equivariant 
	withe respect to the action of $P(1)$ on $\Bun_{\cG}$ at $\infty$.
	Explicitly, let $j:V_P=P(1)/P(2)\hookrightarrow\Bun_{\cG}$
	be the embedding given by the action of $P(1)$ on the trivial $\cG$-bundle.
	By rigidity we have clean extension
	$\cA_\phi=j_!\cL_\phi=j_*\cL_\phi$.
	In the \emph{loc. cit.}, Yun showed that $\cA_\phi$ is a Hecke eigensheaf. 
	We denote the eigenvalue of $\cA_\phi$ by $\cE_\phi$,
	which is a $\LG$-connection on $\bGm$.

	\subsection{$\theta$-connections}\label{ss:theta conn}
	On the other hand, let $\theta$ be a stable inner grading of $\Lg$ in the unique class corresponding to the regular elliptic number $m$
	via \cite[Corollary 14]{RLYG}.
	Let $\Lg_1\subset\Lg$ be the degree one subspace with respect to $\theta$, and $\LG_0\subset\LG$ the fixed reductive subgroup. 
	Let $\Lg_1^{\st}$ be the locus of stable vectors, i.e. those whose $\LG_0$-conjugacy class is closed with finite centralizer. 
	Take arbitrary $X\in\Lg_1^{\st}$. 
	In \cite{Chen}, following an unpublished work of Yun, 
	Chen defined a $\LG$-connection $\nabla^X$ on $\bGm$ from $X$ called a \emph{$\theta$-connection}.
	When the particular Kac coordinate $s_0$ of $\theta$ is nonzero, 
	Chen proved that $\nabla^X$ is cohomologically rigid \cite[Theorem 5.2]{Chen}.
	
	Precisely, assume $\theta$ is a torsion inner automorphism given by the adjoint action of $\exp(x)$, 
	$x\in\mathbb{X}_*(\cT)\otimes_{\bZ}\bQ$
	is a barycenter of a facet, 
	such that $\clambda=mx\in\mathbb{X}_*(\cT)$ 
	where $m\in\bZ_{>0}$ is minimal. 
	The point $x$ can be conjugated by affine Weyl group into the closure of fundamental alcove, 
	thus we will assume $x$ is in this closure. 
	Denote the grading defined by $\clambda$ on $\Lg$ by
	\begin{equation}\label{eq:Z-grading}
		\Lg=\bigoplus_k\Lg(k).
	\end{equation}
	This gives a grading on $\Lg_1$:
	\begin{equation}
		\Lg_1=\bigoplus_{k\equiv 1\!\!\!\mod m}\Lg(k).
	\end{equation}
	Write $X$ as $X=\sum_k X_k$, $X_k\in\Lg_1(k)$.
	Let $\beta$ be the highest root, 
	$\alpha_0=1-\beta$,
	$s_0=m\alpha_0(x)$. 
	Here the Kac coordinate $s_0=0$ or $1$.
	By \cite[Lemma 2.1]{Chen}, 
	those $k$ with $X_k\neq0$ satisfy $s_0-m\leq k\leq 1$ and $m|k-1$.
	Thus indeed $X=X_1+X_{1-m}$. 
	The following is called the \emph{$\theta$-connection} attached to $X$:
	\begin{equation}\label{eq:theta conn}
		\nabla^X:=\td+\sum_k X_k t^{\frac{1-k}{m}}\frac{\td t}{t}
		=d+(X_1+X_{1-m}t)\frac{dt}{t}.
	\end{equation}

	\subsection{The Langlands correspondence}
	\begin{lem}\label{l:GIT isom}
		There exists a natural isomorphism
		\begin{equation}\label{eq:GIT isom}
			V_P^*/\!/L_P\simeq\Lg_1/\!/\LG_0
		\end{equation}
		that restricts to a bijection between stable conjugacy classes:
		\begin{equation}\label{eq:st bij}
			V_P^{*,\st}/L_P\simeq\Lg_1^\st/\LG_0
		\end{equation}
	\end{lem}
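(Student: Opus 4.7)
The plan is to apply Vinberg's theory of $\theta$-groups to identify both GIT quotients with a common quotient of a Cartan subspace by the little Weyl group attached to the regular elliptic number $m$, and then appeal to Langlands duality to match the two sides. First, by \cite[Theorem 4.1]{RY} recalled in the introduction, the pair $(L_P, V_P)$ is canonically isomorphic to $(G_0, \fg_1)$, where $\fg = \bigoplus_i \fg_i$ is the unique stable inner grading of order $m$ on $\fg$. The Killing form on $\fg$ restricts to a perfect $G_0$-equivariant pairing $\fg_1 \times \fg_{-1} \to \bC$, yielding an $L_P$-equivariant identification $V_P^* \cong \fg_{-1}$. It therefore suffices to construct a natural isomorphism $\fg_{-1}/\!/G_0 \cong \Lg_1/\!/\LG_0$ restricting to a bijection of stable orbits.

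Next, I would invoke Vinberg's theorem \cite{Vinberg}: choose Cartan subspaces $\fc \subset \fg_{-1}$ and $\check{\fc} \subset \Lg_1$ (maximal abelian subspaces of semisimple elements). The restriction of functions induces isomorphisms
\begin{equation*}
\fg_{-1}/\!/G_0 \;\cong\; \fc/W, \qquad \Lg_1/\!/\LG_0 \;\cong\; \check{\fc}/\check W,
\end{equation*}
where the little Weyl groups $W = N_{G_0}(\fc)/Z_{G_0}(\fc)$ and $\check W = N_{\LG_0}(\check{\fc})/Z_{\LG_0}(\check{\fc})$ are finite complex reflection groups. Vinberg moreover characterizes stability: a vector in $\fg_{-1}$ (resp.\ $\Lg_1$) is stable if and only if it is $G_0$-conjugate (resp.\ $\LG_0$-conjugate) to a regular element of $\fc$ (resp.\ $\check{\fc}$) for the corresponding reflection-group action. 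Hence the problem reduces to canonically identifying $(\fc, W)$ with $(\check{\fc}, \check W)$ up to duality.

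For this, choose a $\theta$-stable Cartan $\ft \subset \fg$ with Langlands-dual Cartan $\ct \subset \Lg$. By the classification \cite[Corollary 14, \S7]{RLYG}, both stable inner gradings are induced by the common cocharacter $\clambda = mx \in X_*(T) = X^*(\cT)$, and the associated regular elliptic element $w$ of order $m$ lives simultaneously in $W(G,T) = W(\LG, \cT)$; the Cartan subspaces may be taken as $\fc = \ft(\zeta_m^{-1}) \subset \fg_{-1}$ and $\check{\fc} = \ct(\zeta_m) \subset \Lg_1$, the corresponding eigenspaces of $w$. The centralizer of $w$ in the common Weyl group then gives $W = \check W$, and the canonical Langlands duality $\ft \cong \ct^*$ of $W$-modules restricts to a duality $\fc \cong \check{\fc}^{\,*}$. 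This induces the desired isomorphism $\fc/\!/W \cong \check{\fc}/\!/W$, which by Vinberg preserves the regular (hence stable) loci and yields the lemma.

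The main obstacle I expect is verifying in detail that the little Weyl groups on the $\fg$ and $\Lg$ sides coincide as subgroups of the common absolute Weyl group (not merely as abstract reflection groups) and that the induced duality of Cartan subspaces is canonical and independent of auxiliary choices. This amounts to a careful bookkeeping of the Kac-coordinate description of stable inner gradings from \cite[\S7]{RLYG} and of its compatibility with Langlands duality of root data.
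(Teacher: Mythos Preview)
Your approach via Vinberg's Cartan subspaces and little Weyl groups is a genuinely different route from the paper's. The paper instead embeds both GIT quotients into the full Chevalley quotient: the natural maps $\fg_1^*\to\fg^*\to\fc^*:=\ft^*/\!\!/W$ and $\Lg_1\to\Lg\to\cfc:=\ct/\!\!/W$ factor through closed embeddings $\fg_1^*/\!\!/G_0\hookrightarrow\fc^*$ and $\Lg_1/\!\!/\LG_0\hookrightarrow\cfc$ by a result of Panyushev \cite[Theorem 3.6(i)]{Panyushev}, and both images are cut out by the vanishing of those homogeneous generators whose degree is not divisible by $m$. The canonical Langlands identification $\ft^*=\ct$ then gives $\fc^*\simeq\cfc$ matching the two images, and \cite[Lemma 13]{RLYG} handles the stable loci. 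This argument is short precisely because it never leaves the ambient $W$-quotient, so no comparison of little Weyl groups or Cartan subspaces is needed.

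Your approach is more intrinsic to the graded piece but carries exactly the bookkeeping burden you anticipate, plus one point you pass over: the duality $\ft\cong\ct^*$ gives $\fc=\ft(\zeta_m^{-1})\cong\bigl(\ct(\zeta_m)\bigr)^*=\check\fc^{\,*}$, not $\check\fc$ itself, so you still need to identify $\check\fc^{\,*}/\!\!/W_\theta$ with $\check\fc/\!\!/W_\theta$. This is true (the absolute Weyl group is defined over $\bR$, so the $\zeta_m$- and $\zeta_m^{-1}$-eigenspaces are complex-conjugate $W_\theta$-representations, and their invariant rings are isomorphic), but it is an extra step, and checking that the resulting isomorphism matches regular loci requires care. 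The paper's embedding-into-$\fc^*$ argument sidesteps all of this; your approach would in the end recover the same isomorphism but with more moving parts to align.
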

	\begin{proof}
		Let $(G_0, \fg_1)$ be the Vinberg's pair for grading on $\fg$ associated to the regular elliptic number $m$. 
		Then we have isomorphism $(G_0,\fg_1)\simeq(L_P,V_P)$ that preserves the adjoint action. 
		Denote $\fc^*=\fg^*/\!/G\simeq\ft^*/\!/W$ 
		and $\cfc=\Lg/\!/\LG\simeq\ct/\!/W$. 
		The equality $\ft^*=\ct$ induces an isomorphism $\fc^*\simeq\cfc$.
		
		Observe that the projection $\fg=\bigoplus_i\fg_i\rightarrow\fg_1$
		induces the dual map $\fg_1^*\rightarrow\fg^*$.
		The composition $\fg_1^*\rightarrow\fg^*\rightarrow\fg^*/\!\!/G=\fc^*$
		factors through $\fg_1^*/\!\!/G_0\rightarrow\fc^*$.
		On the other hand, map $\Lg_1\rightarrow\Lg\rightarrow\cfc$
		factors through $\Lg_1/\!/\LG_0\rightarrow\cfc$.
		Fix a set of homogeneous generators $\{F_i\}$ of $\bC[\fg^*]^G$, 
		which also gives a set of homogeneous generators 
		of $\bC[\Lg]^{\LG}$ via $\fc^*\simeq\cfc$. 
		From \cite[Theorem 3.5.(i)]{Panyushev}, 
		we know the above two maps are closed embeddings:
		\[
		\fg_1^*/\!/G_0\hookrightarrow\fc^*,\qquad \Lg_1/\!/\LG_0\hookrightarrow\cfc,
		\]
		where the images are both equal to the closed subspace 
		defined by the vanishing of all the generators $F_i$ 
		whose degree is not divisible by $m$. 
		Here we use that $\fg_1^*\simeq\fg_{-1}$ is just the degree one eigenspace of the inverse grading $\theta^{-1}$. 
		This induces the desired isomorphism
		\[
		V_P^*/\!/L_P\simeq\fg_1^*/\!/G_0\simeq\Lg_1/\!/\LG_0.
		\]
		
		The above isomorphism \eqref{eq:GIT isom} induces a bijection 
		between semisimple conjugacy classes in $V_P^*$ and $\Lg_1$. 
		By \cite[Lemma 13]{RLYG}, 
		it induces a bijection between stable classes.
	\end{proof}
	
	\begin{rem}
		Such an isomorphism for $P=I$ is used in \cite[Theorem 4.3.3]{XuZhu}.
	\end{rem}
	
	\begin{thm}\label{t:main}\
		Assume both stable inner gradings of order $m$ on $\Lg$ and $\fg$
		have Kac coordinate $s_0>0$.
		Let $\phi\in V_P^{*,\st}$ be a stable functional, 
			$X=X(\phi)$ be a representative of the stable vector orbit 
			given by the image of $\phi$ under \eqref{eq:GIT isom}. 
			There is an isomorphism of $\LG$-connections: 
			\begin{equation}\label{eq:main isom}
				\cE_\phi\simeq\nabla^X.
			\end{equation}
	\end{thm}

    We will prove the theorem 
    following the strategy described in \S\ref{ss:idea of proof}.
    
    Henceforth, unless otherwise stated, 
    we assume $\theta$ satisfies the assumptions in the above theorem.
    We summarize below how the assumptions can be relaxed 
    in different sections of the article.
	
	\begin{rem}\mbox{}\label{rem:s_0}
		\begin{itemize}
			\item [(i)] 
			The condition $s_0>0$ for the grading on $\fg$ is used only in
			the computation of central supports of parahoric vacuum modules
			\S\ref{s:global opers}
			and parahoric local Hitchin images \S\ref{s:appendeix}
			to overcome a technical difficulty.
            Therefore:
            \begin{itemize}
            	\item 
            	In \S\ref{s:mon at 0} we only need to assume $\theta$ is a stable inner
            	grading on $\fg$ whose Kac coordinate $s_0>0$.
            	\item 
            	In \S\ref{s:theta conn local monodromy at infty} 
            	and \S\ref{s:local opers with slope 1/m},
            	$\theta$ can be any stable inner grading.
            	\item 
            	We will see that the proof of the theorem applies as long as
            	$s_0>0$ for the stable inner grading on $\fg$
            	and the monodromy at $0$ of $\nabla^X$ 
            	is contained in the closure of that of $\cE_\phi$.
            	\item 
            	In Appendix \ref{s:appendeix} we do not require any assumption 
            	on $s_0$.
            \end{itemize}
			
			\item [(ii)]
			The conditions $s_0>0$ for the stable inner gradings 
			of the same order on $\fg$ and $\Lg$
			are consistent as long as the root system is self dual,
			i.e. except in type $B_n$, $C_n$.
			According to the tables in \cite[\S7]{RLYG},
			stable gradings on $\fsp_{2n}$ all satisfy $s_0>0$,
			while on $\so_{2n+1}$ there are both stable gradings with
			$s_0=0$ or $s_0>0$.
			In view of the above discussion, there are four situations:
			\begin{itemize}
				\item [(1)] The gradings on both $\fg$ and $\Lg$ satisfy $s_0>0$.
				The theorem holds in this case.
				
				\item [(2)] The gradings on both $\fg$ and $\Lg$ satisfy $s_0=0$.
				This can happen only away from type $B_n$, $C_n$.
				In this case, 
				we discover that $\nabla^X$ might not have the same monodromy as
				$\mathcal E_\phi$ at $0$, 
				see \S\ref{sss:tame monodromy counterexamples}. 
				In \S\ref{s:conj for s_0=0}, we propose a version of
				geometric Langlands correspondence for epipelagic automorphic data
				and $\theta$-connections allowing $s_0=0$
				and provide an example when $\Lg=\so_7$
				in \S\ref{ss:eg of min orbit conj}.
				
				\item [(3)] On $\fg$, $s_0>0$, while on $\Lg$, $s_0=0$.
				This happens only if $\fg=\fsp_{2n}$ 
				and $\Lg=\so_{2n+1}$.
				In this case, the monodromies of $\nabla^X$ and $\cE_\phi$ at $0$ can either match or not.
				See \S\ref{ss:s_0=0 ex with corr mon} 
				for examples of $\nabla^X$ with matching monodromy at $0$,
				and \S\ref{ss:eg of min orbit conj} for $\nabla^X$ 
				whose monodromy does not match.
				
				\item [(4)] On $\fg$, $s_0=0$, while on $\Lg$, $s_0>0$.
				This happens only if $\fg=\so_{2n+1}$ and $\Lg=\fsp_{2n}$.
				From Lemma \ref{l:Lg=sp X_1 in u_P}, we will see
				the monodromies at $0$ always match in this case.
				We expect the theorem holds as well in this situation,
				as the current use of condition $s_0>0$ on $\fg$
				is only to tackle a technical difficulty.			
			\end{itemize}
		\end{itemize}
	\end{rem}

	\section{Local monodromy at $0$}\label{s:mon at 0}
	\subsection{Matching monodromies at $0$}
	One ingredient of the proof of Theorem \ref{t:main} is that
	the local monodromy of $\nabla^X$ at $0$ coincides with 
	the local monodromy of $\cE_\phi$ at $0$.
	
	For $\nabla^X$, it is clear from \eqref{eq:theta conn} that
	its local monodromy at $0$ is just $\exp(X_1)$.
	
	For $\cE_\phi$, by \cite[Theorem 4.5]{YunEpipelagic} 
	its local monodromy at $0$ is the class $\uu_P$
	given by Lusztig's bijection 
	between two-sided cells of affine Weyl group of $G$
	and unipotent conjugacy classes of $\LG$.
	Under our assumption $s_0>0$, 
	$\uu_P$ can be described as follows.
	In this case $P\subset G(\cO)$ is the preimage of a 
	standard parabolic subgroup $P_0\subset G$ under reduction map.
	Let $\hat{P_0}\subset\LG$ be the dual standard parabolic subgroup.
	Then by \cite[Proposition 4.8.(2)]{YunEpipelagic}, 
	$\uu_P$ is the Richardson class of $\hat{P_0}$.
	Denote by $U_{\hat{P_0}}$ the unipotent radical of $\hat{P_0}$
	with Lie algebra $\fu_{\hat{P_0}}$,
	then the closure of $\uu_P$ is $\Ad_{\LG}\fu_{\hat{P_0}}$.
	
	\subsubsection{}
	We first establish the following weaker result.
	\begin{prop}\label{p:0 mon match}
		Any stable vector $X=X_1+X_{1-m}$ satisfies
		$X_1\in\Ad_{\LG}\fu_{\hat{P_0}}$.
	\end{prop}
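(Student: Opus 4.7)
The strategy is to prove the sharper statement $\Lg(1) \subseteq \fu_{\hat{P_0}}$, from which $X_1 \in \fu_{\hat{P_0}} \subseteq \Ad_{\LG}\fu_{\hat{P_0}}$ follows at once, so that in fact no conjugation is required.

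Root-theoretically, $\Lg(1) = \bigoplus_{\beta}\Lg_\beta$ summed over roots $\beta$ of $\Lg$ with $\langle\clambda,\beta\rangle = 1$. Expanding $\beta = \sum_{i\geq 1} c_i\check{\alpha}_i$ in simple roots, this condition reads $\sum c_i s_i^L = 1$, where $s_i^L = \check{\alpha}_i(\clambda)$ is the Kac coordinate of $\theta$ on $\Lg$. Since every $s_i^L \geq 0$, a negative root would force the sum to be $\leq 0$, so every such $\beta$ is positive. Moreover, the integrality constraint forces exactly one coefficient $c_k = 1$ with $s_k^L = 1$, while every other $c_j$ is supported on $\{j : s_j^L = 0\}$. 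Hence the inclusion $\Lg_\beta \subseteq \fu_{\hat{P_0}}$ (equivalently, $\beta$ is not a root of the Levi of $\hat{P_0}$) reduces to the combinatorial implication: for every index $k \geq 1$ with $s_k^L = 1$, the Kac coordinate $s_k^G$ of the stable $\fg$-grading of the same order is also positive.

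To finish, I would verify this Kac coordinate implication case-by-case across the pairs of stable inner gradings listed in \eqref{eq:inner types both s_0>0}. In simply-laced types ($A_n, D_n, E_6, E_7, E_8$) the implication is automatic, since $\fg \simeq \Lg$ and the two gradings literally coincide. In the non-simply-laced cases ($B_n, C_n, F_4, G_2$ with the allowed orders), it follows by direct inspection of the Kac diagrams tabulated in \cite[\S7]{RLYG}.

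The main obstacle I expect is precisely this last step: at present there is no uniform, type-free argument showing that the supports of the order-$m$ stable Kac diagrams on $\fg$ and on $\Lg$ are compatibly related, so the non-simply-laced verification rests on case-by-case inspection. A conceptual understanding of this Kac-diagram correspondence would streamline the proof and perhaps extend it to the missing cases discussed in Remark \ref{rem:s_0}.
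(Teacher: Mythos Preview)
Your strategy of proving the inclusion $\Lg(1)\subseteq\fu_{\hat{P_0}}$ works in the simply-laced cases and essentially reproduces the paper's argument there. The problem is that this inclusion is \emph{false} in the two $B_n/C_n$ cases on the list, so the ``direct inspection'' you defer to cannot succeed.

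Take $\Lg=\so_{2n+1}$, $\fg=\fsp_{2n}$, $m=n$ even. The Kac coordinates on the $\Lg$ side are $s_i^L=1$ exactly for $i\in\{0,1,3,5,\dots,n-1\}$, while on the $\fg$ side $s_i^G=1$ exactly for $i$ even. Your key implication ``$s_k^L=1\Rightarrow s_k^G>0$'' fails already at $k=1$: one has $s_1^L=1$ but $s_1^G=0$. Concretely, the simple root $\check{\alpha}_1$ satisfies $\langle\clambda,\check{\alpha}_1\rangle=s_1^L=1$, so $\Lg_{\check{\alpha}_1}\subset\Lg(1)$; but $\check{\alpha}_1$ is a root of the Levi of $\hat{P_0}$ (since $s_1^G=0$), hence $\Lg_{\check{\alpha}_1}\not\subset\fu_{\hat{P_0}}$. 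The same phenomenon occurs for $\Lg=\fsp_{2n}$, $\fg=\so_{2n+1}$, $m=n$ even, with the root $\check{\alpha}_2$. So no conjugation-free argument can work here; one genuinely needs $\Ad_{\LG}$.

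What the paper does in these cases is different in kind: it uses that $X=X_1+X_{1-m}$ is \emph{regular semisimple} (Lemma~\ref{l:st=rs}) to constrain $X_1$. Not every element of $\Lg(1)$ arises as the degree-one part of a stable vector, and the regular semisimplicity forces enough blocks of $X_1$ to be nonsingular that, after $\LG(0)$-conjugation, one can write down its Jordan type explicitly and match it against $\uu_P$. Your argument never invokes stability of $X$, which is why it cannot separate the $X_1$'s that occur from the rest of $\Lg(1)$.
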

	\begin{proof}
		First consider when the root system of $\fg$ is self-dual,
		which is the case if $G$ is not of type $B_n$, $C_n$.
		Recall the $\bZ$-grading \eqref{eq:Z-grading} on $\Lg$ defined by $\clambda=mx$ refines the stable grading.
		Then $\clambda$ defines a standard parabolic subgroup $\hat{P}\subset\LG$ with unipotent radical $U_{\hat{P}}$.
		Their Lie algebras are given by
		\[
		\hat{\fp}=\bigoplus_{k\geq 0}\Lg(k),
		\qquad \fu_{\hat{P}}=\bigoplus_{k\geq1}\Lg(k).
		\]
		
		Since the root system is self-dual and $s_0>0$,
		we have $\hat{P_0}=\hat{P}$.
		Thus $X_1\in\Lg(1)\subset\fu_{\hat{P}}=\fu_{\hat{P_0}}$.
		
		When $\fg$ is of type $B_n$ or $C_n$, 
		if $m=2n$ is the Coxeter number, $\hat{P_0}$ is a Borel subgroup,
		where the statement is trivial.
		Otherwise, there are only two cases
		where both stable gradings on $\fg$ and $\Lg$ of the same order
		satisfy $s_0>0$:
		when $\Lg=\so_{2n+1}$, $\fg=\fsp_{2n}$, $m=n$ is even;
		or when $\Lg=\fsp_{2n}$, $\fg=\so_{2n+1}$, $m=n$ is even.
		We will verify by hand in Lemma \ref{l:Lg=sp X_1 in u_P}
		and Lemma \ref{l:Lg=so_2n+1 X_1 in u_P} that 
		$X_1$ always has the same Jordan type as 
		the Richardson class $\uu_P$.
	\end{proof}
	
	We will use the following fact in the computations 
	in \S\ref{ss:0 mon Lg=sp} and \S\ref{ss:0 mon Lg=so_2n+1 m=n even}.
	\begin{lem}\label{l:st=rs}
		For a stable grading $\theta$ on $\Lg=\bigoplus_i\Lg_i$, 
		a vector $X\in\Lg_1$ is stable if and only if 
		it is regular semisimple.
	\end{lem}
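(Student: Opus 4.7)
Both implications begin with the reduction that if $X\in\Lg_1$ is either stable or regular semisimple in $\Lg$, then $X$ is semisimple in $\Lg$: for stability this uses the Kostant--Rallis/Vinberg fact that the $\LG_0$-orbit of $X$ in $\Lg_1$ is closed if and only if its $\LG$-orbit is closed, i.e.\ $X$ is semisimple. Assuming $X$ semisimple, Vinberg's theorem lets us $\LG_0$-conjugate $X$ into a fixed Cartan subspace $\fc\subset\Lg_1$. Set $\ft':=Z_\Lg(\fc)$, a $\theta$-stable Cartan subalgebra of $\Lg$; the automorphism $\theta$ acts on $\ft'$ as a Weyl element $w$ of order $m$, and the stability of the grading is equivalent to Springer-regularity of $w$, i.e.\ $\ft'^{w}=0$.

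Decomposing $\Lg=\ft'\oplus\bigoplus_{\alpha\in\Phi}\Lg_\alpha$ along $\ft'$-roots, we have $Z_\Lg(X)=\ft'\oplus\bigoplus_{\alpha(X)=0}\Lg_\alpha$. Hence $X$ is regular semisimple in $\Lg$ iff $\alpha(X)\ne 0$ for every root $\alpha$, while $X$ is stable iff $Z_\Lg(X)^{\theta}=0$ (this is the Lie algebra of the $\LG_0$-stabilizer of $X$). Since $\ft'^{\theta}=\ft'^{w}=0$, the stability condition reduces to the vanishing of the $\theta$-fixed vectors inside $\bigoplus_{\alpha(X)=0}\Lg_\alpha$.

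The direction regular semisimple $\Rightarrow$ stable is then immediate: if no root vanishes on $X$, then $Z_\Lg(X)=\ft'$, so $Z_\Lg(X)^{\theta}=0$, and together with semisimplicity this gives stability. The converse is the main obstacle. Suppose $\alpha(X)=0$ for some root $\alpha$; then $\fl:=Z_\Lg(X)$ is a $\theta$-stable reductive Lie subalgebra strictly containing $\ft'$, and one must produce a nonzero $\theta$-fixed element in $\fl$. The plan is to note that $w$ permutes the sub-root system $\Phi_X=\{\alpha\in\Phi:\alpha(X)=0\}$, group its roots into $w$-orbits, and use the explicit cyclic $w$-action on each orbit together with the Killing-form pairing between $\Lg_\alpha$ and $\Lg_{-\alpha}$ to exhibit a $\theta$-fixed vector; case-by-case this reduces to the rank-one $\Sl_2$ subalgebras generated by opposite roots and is handled uniformly via Springer's theory of regular elements. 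Rather than reprove this, we will appeal to \cite[Lemma 13]{RLYG}, which identifies the $\LG_0$-stable orbits in $\Lg_1$ with $W_\theta$-regular orbits in $\fc$, and for a stable grading these are precisely the elements that are regular semisimple in $\Lg$.
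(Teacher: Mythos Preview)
Your proposal is correct and rests on the same ingredient as the paper, namely \cite[Lemma~13]{RLYG}, but you have inverted the two directions in terms of difficulty and left the actual content as an unproved assertion.

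The paper uses \cite[Lemma~13]{RLYG} to dispose of the implication stable $\Rightarrow$ regular semisimple immediately, and then spends the proof on the implication regular semisimple $\Rightarrow$ stable. The point there is exactly to justify what you simply assert: that $\ft'^{\theta}=0$ for the Cartan $\ft'=Z_{\Lg}(\fc)$ attached to an \emph{arbitrary} Cartan subspace $\fc\subset\Lg_1$. The paper does this by picking a stable vector $Y$ (which exists since the grading is stable), invoking \cite[Lemma~13]{RLYG} to get $\fz(Y)^\theta=0$, and then using $\LG_0$-conjugacy of Cartan subspaces to transport this vanishing to $\ft'$. Your sentence ``the stability of the grading is equivalent to Springer-regularity of $w$, i.e.\ $\ft'^{w}=0$'' is correct, but it is a theorem rather than a definition (stable grading is \emph{defined} by the existence of a stable vector), and the paper's three-line conjugacy argument is precisely its proof. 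You should either include that argument or cite the characterization of stable gradings by $\bZ$-regular elliptic Weyl classes explicitly.

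Conversely, the direction you call the ``main obstacle'' is in fact immediate and does not need your root-orbit analysis or Springer theory: if $X$ is stable but not regular, then $[Z_{\Lg}(X),Z_{\Lg}(X)]$ is a nonzero $\theta$-stable semisimple Lie algebra, and any finite-order automorphism of such has nonzero fixed points, contradicting $Z_{\Lg}(X)^\theta=0$. (This is also what \cite[Lemma~13]{RLYG} encodes, which is why the paper just cites it.) So your eventual appeal to \cite[Lemma~13]{RLYG} at the end is the right move; the preceding discussion of $w$-orbits of roots can be dropped.
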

	\begin{proof}
		By \cite[Lemma 13]{RLYG}, we only need to show that
		for $X\in\Lg_1$ regular semisimple, 
		$\theta$ has no fixed point on the Cartan subalgebra $\fz(X)$.
		Let $\fc\subset\Lg_1$ be the Cartan subspace containing $X$.
		Then $\fz_{\Lg}(\fc)=\fz(X)$ is a $\theta$-stable Cartan subalgebra.
		Since the grading is stable, there exists stable vector $Y\in\Lg_1$.
		Since all the Cartan subspaces of $\Lg_1$ are all conjugated by $\LG_0$,
		there exists $g\in\LG_0$ such that $\Ad_g(Y)\in\fc$.
		Thus $\fz(X)=\Ad_g(\fz(Y))$.
		By \cite[Lemma 13]{RLYG}, $\fz(Y)^\theta=0$.
		As $\theta(g)=g$, we get
		$\fz(X)^\theta=\Ad_g(\fz(Y)^\theta)=0$.
		This shows that $X$ is stable.
	\end{proof}
    
    \subsubsection{}
    Denote by $u_{\hat{P}_0}$ the unique open dense
    $\hat{P}_0$-orbit in $\fu_{\hat{P}_0}$ where
    $\exp(u_{\hat{P}_0})=\uu_P\cap U_{\hat{P}_0}$. 
    Let $\cO_\theta$ be the unique open dense $\LG(0)$-orbit in $\Lg(1)$.
	\begin{cor}\label{c:theta conn 0 monodromy}
		Assumptions as before.
		\begin{itemize}
			\item [(i)]
			Any stable vector $X=X_1+X_{1-m}\in\Lg_1=\Lg(1)\oplus\Lg(1-m)$ satisfies
			$\exp(X_1)\in\uu_P$.
			\item [(ii)]
			We have $\cO_\theta=u_{\hat{P}_0}\cap\Lg(1)$.
			As a result, any stable vector $X=X_1+X_{1-m}$
			satisfies $X_1\in\cO_\theta$.
		\end{itemize}
	\end{cor}
	\begin{proof}
		(i):
		On the one hand, by \cite[Theorem 5.1, Theorem 5.2]{Chen}\footnote{In \cite[Theorem 5.1]{Chen}, the vector $X$ should be assumed to be not only regular semisimple, but stable. Otherwise, the Lemma 6.1 of \emph{loc. cit.} fails for $n=0$, which is because the pairing $\langle t\partial_t(z),z'\rangle_\sigma$ in the proof of Proposition 3.5 (3) of \emph{loc. cit.} always vanishes for $z,z'\in\mathfrak{a}_0$. When $X$ is stable, $\mathfrak{a}_0=0$, so that Lemma 6.1 and Theorem 5.1 hold under the stable assumption.}, we have
		$\dim\Lg^{X_1}=\frac{|\Phi|}{m}$.
		On the other hand, by the proof of \cite[Proposition 5.2]{YunEpipelagic},
		we have that for $u\in u_{\hat{P}_0}$, $\dim\Lg^u=\frac{|\Phi|}{m}$.
		Thus $\dim\Lg^{X_1}=\dim\Lg^u$, 
		and the $\LG$-orbits of $X_1$ and $u\in\uu_P$ have the same dimension.
		By Proposition \ref{p:0 mon match}, 
		$X_1$ is in the closure of $u_{\hat{P}_0}$.
		Thus we must have $X_1\in u_{\hat{P}_0}$,
		$\exp(X_1)\in\uu_P$.
		
		(ii):
		When $G$ is of type $B_n$ or $C_n$,
		the proposition will be verified in 
		Lemma \ref{l:Lg=sp X_1 in u_P} and Lemma \ref{l:Lg=so_2n+1 X_1 in u_P}.
		
		When $G$ is of other types, the dual parabolic $\hat{P_0}$ coincides
		with the parabolic $\hat{P}$ defined from $\clambda$.
		Denote by $L_{\hat{P}}, U_{\hat{P}}$ the 
		standard Levi subgroup and unipotent radical of $\hat{P}$,
		with Lie algebra $\fu_{\hat{P}}=\Lie(U_{\hat{P}})$.
		Denote by $u_{\hat{P}}$ the unique open dense
		$\hat{P}$-orbit in $\fu_{\hat{P}}$ where
		$\exp(u_{\hat{P}})=\uu_P\cap U_{\hat{P}}$. 
		Observe that $L_{\hat{P}}=\LG(0)=\LG_0$
		and $\fu_{\hat{P}}=\bigoplus_{k\geq1}\Lg(k)$.
		
		By Lemma \ref{l:st=rs}, 
		stable vectors are open dense in $\Lg_1$,
		so that their projection in $\Lg(1)$ is open dense.
		Thus there exists stable $X=X_1+X_{1-m}$ such that $X_1\in\cO_\theta$.
		By part (i), $X_1\in u_{\hat{P}_0}$. 
		Thus $\cO_\theta\subset u_{\hat{P}}\cap\Lg(1)$.
		On the other hand, the projection of 
		$u_{\hat{P}}
		=\hat{P}\cdot X_1
		=U_{\hat{P}}\cdot(\LG(0)\cdot X_1)\subset\fu_{\hat{P}}$ 
		to $\Lg(1)$ is $\LG(0)\cdot X_1$, 
		a single $\LG(0)$-orbit.
		So that the projection of $u_{\hat{P}}\cap\Lg(1)$ to itself 
		is a single $\LG(0)$-orbit.
		We conclude $\LG(0)\cdot X_1=u_{\hat{P}}\cap\Lg(1)=\cO_\theta$.
	\end{proof}

	\begin{rem}\mbox{}
		\begin{itemize}
			\item [(1)]
			When $\LG$ is of classical type, 
			i.e. $A_n, B_n, C_n, D_n$,
			the regular semisimple elements
			are those matrices whose eigenvalues are nonzero under roots.
			Then the proposition can be verified directly case-by-case.
			
			\item [(2)]
			When $\LG$ is of exceptional type, 
			i.e. $E_6, E_7, E_8, F_4, G_2$,
			there is a direct argument for part (ii). 
			By \cite[Proposition 26]{RLYG},
			we know in exceptional types
			there exists a distinguished nilpotent element $A\in\fg(1)$
			such that $\ad(A):\Lg(0)\rightarrow\Lg(1)$ is a bijection,
			and there exists vector $M\in\Lg(1-m)$ such that 
			$M+A\in\Lg_1^\st$.
			From this we know $\LG(0)\cdot A$ is dense in $\Lg(1)$,
			thus $\LG(0)\cdot A=\cO_\theta$.
			Also, we know from \cite[Proposition 26, Remark 2]{RLYG} 
			that $\cO_\theta$ corresponds to the regular elliptic class representing $\theta$ in the Weyl group via the Kazhdan-Lusztig map, 
			and from the last sentence of \cite[\S1.2]{YunEpipelagic}
			that the regular elliptic class corresponds to $\uu_P$ via Lusztig's map.
			The main theorem of \cite{YunKL} tells us the Kazhdan-Lusztig map is a section of Lusztig's map.
			Thus $\cO_\theta$ is in the same conjugacy class as $\uu_P$. 	
			
		\end{itemize}
	\end{rem}
	
	\subsubsection{Counterexamples}\label{sss:tame monodromy counterexamples}
	\begin{itemize}
		\item [(1)]
		Let $\LG=\SO_7$.
		Consider the stable grading of $\Lg=\so_7$ of order $m=2$,
		which has $s_0=0$.
		In this case, $p(\Lg_1^\st)$ contains two $\LG(0)$-orbits,
		where $\uu_P$ is not the open orbit $\cO_\theta$ in $\Lg(1)$
		but rather the smaller orbit.
		Therefore Corollary \ref{c:theta conn 0 monodromy} fails.
		See \S\ref{ss:eg of min orbit conj} for more details.
		
		\item [(2)]
		Let $\LG=\SO_{2n}$. 
		Consider the stable inner grading of $\Lg=\so_{2n}$ with $s_0=0$ and
		of order $m=\frac{2n}{k}$, where $k>2$ is an even divisor of $n$,
		i.e. the third class of \cite[Table 14]{RLYG}.
		In this case, $p(\Lg_1^\st)$ contains more than one $\LG(0)$-orbit,
		so that Corollary \ref{c:theta conn 0 monodromy}.(i) fails.
		Moreover, $\uu_P$ is also not the open orbit $\cO_\theta$,
		but rather the smallest $\LG(0)$-orbit in $\Lg(1)$.
		
		\item [(3)]
		Let $\LG=G_2$.
		Consider the stable grading of $\fg_2$ with $s_0=0$ and of order $m=2$,
		the third class of \cite[Table 7]{RLYG}.
		In this case there are three nonzero $\LG(0)$-orbits in $\Lg(1)$,
		whose stabilizers in $\LG(0)$ have dimensions $0,1,2$ respectively.
		These three orbits all belong to $p(\Lg_1^\st)$.
		However, $\uu_P$ corresponds to the orbit with stabilizer dimension one,
		i.e. neither the open orbit $\cO_\theta$ nor the minimal orbit.
		
		\item [(4)]
		Consider an inner grading of positive rank on $\Lg$ with $s_0>0$
		that is not necessarily stable.
		We compare the Richardson class defined by $\clambda=mx$,
		i.e. the class $\uu_P$ when the grading is stable with $s_0>0$,
		and the orbit $\cO_\theta$.
		Since
		\[
		\Stab_{\hP}(X_1)=\{ug\in\hP|\ u=\exp(\sum_{i\geq1}Y_i),\ Y_i\in\Lg(i),\ [Y_i,X_1]=0;\ g\in\LG(0),\ g\cdot X_1=X_1\},
		\]
		we have
		\begin{align*}
			&\hP\cdot X_1\ \mathrm{is\ open\ dense\ in}\ \fu_{\hP}
			\Leftrightarrow\dim\hP\cdot X_1=\dim\fu_{\hP}\\
			\Leftrightarrow&\dim\Stab_{\hP}(X_1)
			=\dim\LG(0)-\dim\LG(0)\cdot X_1+\sum_{i\geq 1}\dim\Lg(i)-\dim[\Lg(i),X_1]\\
			&\hspace{2.55cm}=\dim\LG(0)+\sum_{i\geq 0}\dim\Lg(i+1)-\dim[\Lg(i),X_1]\\
			&\hspace{2.55cm}=\dim\LG(0)\\
			\Leftrightarrow&[\Lg(i),X_1]=\Lg(i+1),\quad\forall i\geq 0.
		\end{align*}
		
		A necessary condition for the above to hold is that
		\[
		\dim\Lg(i)\geq\dim\Lg(i+1),\quad \forall\ i\geq 0.
		\]
		
		However, this is not true in general\footnote{The inequality is always true for $i=0$, see the proof of \cite[Proposition 3.1]{BC}}. 
		Consider the grading on $\Lg=\Sl_{2n+1}$ defined by the adjoint action of a diagonal matrix $t$ satisfying $\alpha_i(t)=1$, $i\neq n,n+1$; $\alpha_n(t)=\alpha_{n+1}(t)=\exp(2\pi i/3)$. 
		Here $\alpha_i$ are simple roots.
		The order of this grading is $3$. 
		It is easy to see that this grading has positive rank: 
		taking an element $X\in\Lg_1$, 
		$X^3$ is a block diagonal matrix contained in 
		$\gl_n\times \gl_1\times\gl_n$. 
		We can choose $X$ so that $X^3$ has nonzero component in $\gl_1$. Then $X^{3n}\neq 0$, which cannot happen if $X\in\Sl_{2n+1}$ is nilpotent.
		For this grading, $\LG(0)\simeq P(\GL_n\times\GL_1\times\GL_n)$, 
		$\Lg(1)\simeq M_{n\times 1}\oplus M_{1\times n}$, 
		$\Lg(2)\simeq M_{n\times n}$. Thus $\dim\Lg(1)<\dim\Lg(2)$. 
		
		We can see that in the above example where
		$s_0>0$ and the grading has positive rank, 
		any nilpotent class in $\Lg(1)$ is not the Richardson class.
	\end{itemize}

	\subsection{$\Lg=\fsp_{2n}$}\label{ss:0 mon Lg=sp}\mbox{}
	
	Let $V=\bC^{2n}$ be a $2n$-dimensional complex vector space with a fixed basis
	\[
	e_1,e_2,...,e_n,e_{-n},...,e_{-1}.
	\] 
	Then
	\[
	\fsp_{2n}=\{
	\begin{pmatrix}
		A & B\\
		C & D
	\end{pmatrix}
	\big|D=-sA^t s,\ B=sB^t s,\ C=sC^t s
	\},\quad \mathrm{where} \quad
	s=\begin{pmatrix}
		& & & &1\\
		& & &1& \\
		& &\reflectbox{$\ddots$}& & \\
		&1& & & \\
		1& & & &
	\end{pmatrix}.
	\]
	Here $sB^t s$ is the transpose of $B$ with respect to the anti-diagonal. We would later also use $s$ to denote unit anti-diagonal matrices of other sizes. 
	A Cartan subalgebra can be given by
	\[
	\fh=\{\diag(a_1,a_2,...,a_n,-a_n,...,-a_1)\big| a_i\in\bC\}.
	\]
	Let $\chi_i$ be the projection from $\fh$ to $a_i$, 
	then the roots are $\chi_i\pm\chi_j$, $i\neq j$ and $2\chi_i$. 
	If an element $X\in\fsp_{2n}$ is regular semisimple, 
	then after being conjugated into $\fh$, 
	it is nonvanishing under all the roots. 
	Equivalently,
	its $2n$ eigenvalues are all distinct from each other and are nonzero.
	
	We verify the following stronger property:
	\begin{lem}\label{l:Lg=sp X_1 in u_P}
		Let $\Lg=\fsp_{2n}$ and consider any stable grading of order $m$ on $\Lg$.
		Then for any stable vector $X=X_1+X_{1-m}$,
		$\exp(X_1)\in\uu_P$.
	\end{lem}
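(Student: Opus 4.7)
The plan is a case-by-case verification in the explicit symplectic basis fixed at the start of \S\ref{ss:0 mon Lg=sp}, using Lemma \ref{l:st=rs} to replace the stability hypothesis by the requirement that $X = X_1 + X_{1-m}$ be regular semisimple in $\fsp_{2n}$. According to the classification in \cite[\S7]{RLYG}, the stable inner gradings of $\fsp_{2n}$ occur only for $m = 2n$ (Coxeter) and for $m = n$ with $n$ even, and in each case $s_0 > 0$, so $\hat{P_0} = \hat{P}$ is the standard parabolic of $\LG = \mathrm{Sp}_{2n}$ cut out by $\clambda = mx$ and the Richardson class $\uu_P$ is read off from its Levi block sizes.

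First I would handle the Coxeter case $m = 2n$, where $P$ is the Iwahori subgroup and the statement reduces to the Frenkel-Gross setting. There $\Lg(1)$ is the direct sum of the simple root spaces, $\Lg(1-m)$ is the lowest root space, and the regular semisimplicity of $X = X_1 + X_{1-m}$ (an affine Coxeter element) forces every simple-root coefficient of $X_1$ to be nonzero. Hence $X_1$ is regular nilpotent and $\exp(X_1)$ lies in the regular unipotent class, which is the Richardson class of the Borel $\hat{P_0}$.

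Next, for $m = n$ with $n$ even, I would write $\clambda$ explicitly as a diagonal cocharacter partitioning the standard symplectic basis into a small pattern that is symmetric across the anti-diagonal, so that $\Lg(1)$ and $\Lg(1-m)$ become explicit block-matrix subspaces of $\fsp_{2n}$ (with the blocks paired under the form $s$). The requirement that $X$ have $2n$ distinct nonzero eigenvalues then reduces, via a direct computation of the characteristic polynomial of the associated block matrix, to the nondegeneracy of a few small auxiliary matrices built from the blocks of $X_1$ and $X_{1-m}$. This nondegeneracy forces $X_1$ to attain maximal rank inside $\Lg(1)$, and its Jordan partition, read off from the block sizes, matches the Richardson partition of $\hat{P_0}$ computed in the same way from the Levi blocks. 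Since unipotent classes in $\mathrm{Sp}_{2n}$ are determined by their Jordan partitions (no sign invariant enters, as that would only happen in $\mathrm{SO}_{2n}$ with very even partitions), this identification yields $\exp(X_1) \in \uu_P$.

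The main obstacle is the bookkeeping for $m = n$: the anti-diagonal form makes $\Lg(1)$ symmetric but combinatorially fiddly, and one must verify that the generic rank configuration inside $\Lg(1)$ is really attained by $X_1$ whenever $X_1 + X_{1-m}$ is regular semisimple. I expect this to reduce to the nonvanishing of a resultant-type expression in the entries of the $X_1$- and $X_{1-m}$-blocks, which in turn follows from the distinctness and nonvanishing of the eigenvalues of $X$; once this is in place the rest of the argument is purely linear-algebraic.
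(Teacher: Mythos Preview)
Your classification of the stable gradings on $\fsp_{2n}$ is wrong, and this is a genuine gap. You assert that the only orders are $m=2n$ and $m=n$ with $n$ even, but \cite[Table 13]{RLYG} gives a stable inner grading of order $m=2n/k$ for \emph{every} divisor $k\mid n$ (equivalently, $m$ runs over twice the divisors of $n$), and all of them have $s_0>0$. So for $n=6$ you would need to handle $m\in\{2,4,6,12\}$, not just $m\in\{6,12\}$. The lemma is stated for \emph{any} stable grading precisely because it is also invoked later (Remark~\ref{rem:s_0}(ii)(4)) to cover the situation $\Lg=\fsp_{2n}$, $\fg=\so_{2n+1}$ with $s_0=0$ on $\fg$, where one must allow all of these $m$.

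Even on the cases you do treat, your sketch for $m=n$ is less direct than what the paper does and leaves the key step (``forces $X_1$ to attain maximal rank inside $\Lg(1)$'') unjustified. The paper's argument is uniform in $k$: it writes $\LG(0)\simeq\GL_k^{m/2}$ and $\Lg_1$ as a cycle of $k\times k$ blocks $A_1,\dots,A_{m/2},A_m$ (with $A_{m/2},A_m$ satisfying an anti-symmetry condition), observes that regular semisimplicity of $X$ forces $X$ to be invertible and hence each $A_i$ invertible, and then uses the $\LG(0)$-action $A_i\mapsto M_iA_iM_{i+1}^{-1}$ together with a congruence argument on the symmetric matrix $sA_{m/2}$ to conjugate every $X_1$ to the single normal form $X_1^\circ\simeq(I,\dots,I,s)$. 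The Jordan type of $X_1^\circ$ is then visibly $k$ blocks of size $m$, matching $\uu_P$. Your resultant/rank approach could in principle be made to work, but you would still need the block invertibility step, and after that the $\LG(0)$-normal-form argument is both shorter and what actually generalises to all $k$.
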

	\begin{proof}
		Note that $\fsp_{2n}$ has only inner automorphisms.
		By \cite[Table 13]{RLYG}, any stable grading on $\fsp_{2n}$
		satisfies $s_0>0$, 
		where the order $m$ is $2$ times a divisor of $n$.
		
		As said before, the case of $m=2n$ is trivial. 
		Let $k>1$ be a divisor of $n$ such that $m=\frac{2n}{k}$. 
		Consider the order $m$ stable grading of $\Lg=\fsp_{2n}$.
		By \cite[Table 13]{RLYG},
		its Kac coordinates are given by
		\[
		s_i=
		\begin{cases}
			1, \qquad i=kr,\ 0\leq r\leq \frac{n}{k}=\frac{m}{2};\\
			0, \qquad \mathrm{otherwise}. 
		\end{cases}
		\]
		Since $s_0>0$, we have $\Lg(0)=\Lg_0$.
		Then the subgroup $\LG(0)\subset\LG$ with Lie algebra $\Lg(0)$
		coincides with $\LG_0$.
		Explicitly, we have
		\[
		\LG(0)=\{
		\begin{pmatrix}
			M_1&      &       &                  &      & \\
			&\ddots&       &                  &      & \\
			&      &M_{m/2}&                  &      & \\
			&      &       &s(M_{m/2}^t)^{-1}s&      & \\
			&      &       &                  &\ddots& \\
			&      &       &                  &      &s(M_1^t)^{-1}s
		\end{pmatrix}
		\big|M_i\in\GL_k(\bC), 1\leq i\leq \frac{m}{2}
		\},
		\]
		and
		\begin{align*}
			\Lg_1=&\Lg(1)\oplus\Lg(1-m)\\
			=&\{
			\begin{pmatrix}
				0&A_1   &      &         &       &               &      & \\
				&\ddots&\ddots&         &       &               &      & \\
				&      &0     &A_{m/2-1}&       &               &      & \\
				&      &      &0        &A_{m/2}&               &      & \\
				&      &      &         &0      &-sA_{m/2-1}^t s&      & \\
				&      &      &         &       &\ddots         &\ddots& \\
				&      &      &         &       &               &0     &-sA_1^t s\\
				A_m&      &      &         &       &               &      &0
			\end{pmatrix}
			\big|A_i\in M_{k\times k}(\bC), 1\leq i\leq \frac{m}{2}-1;\\
			&\quad A_{m/2},A_m\in M_{k\times k}(\bC),\ A_{m/2}=sA_{m/2}^ts,\ A_m=sA_m^t s
			\},
		\end{align*}
		where $\Lg(1)$ consists of $m-1$ blocks in the upper triangular part, $\Lg(1-m)$ consists of a single block in the lower left corner. 
		For simplicity of notations, in the following we will denote an element of $\LG(0)$ by $g\simeq (M_1,...,M_{m/2})$, and an element of $\Lg_1$ (resp. $\Lg(1)$) by $X\simeq (A_1,...,A_{m/2};A_m)$ (resp. $X_1\simeq(A_1,...,A_{m/2})$).
		
		Let $X=X_1+X_{1-m}\simeq(A_1,...,A_{m/2};A_m)\in\Lg_1$ be a stable vector.
		By Lemma \ref{l:st=rs}, $X$ is regular semisimple,
		thus has only nonzero eigenvalues, i.e. $X$ is invertible. 
		Thus each $A_i$ must be invertible.
		
		Let $g\simeq(M_1,...,M_{m/2})\in\LG(0)$.
		The adjoint action of $\LG(0)$ on $\Lg_1$ is given by
		\[
		\Ad_gX_1\simeq(M_1A_1M_2^{-1},...,M_{m/2-1}A_{m/2-1}M_{m/2}^{-1},M_{m/2}A_{m/2}sM_{m/2}^ts).
		\]
		Since $A_{m/2}=sA_{m/2}^ts$, we see $sA_{m/2}=(sA_{m/2})^t$ is symmetric. 
		Thus under the congruence transformation by some invertible matrix $sM_{m/2}s$, we have $(sM_{m/2}s)sA_{m/2}(sM_{m/2}s)^t=I$, 
		which is equivalent to $M_{m/2}A_{m/2}sM_{m/2}^ts=s$. 
		By inductively choosing $M_i=M_{i+1}A_i^{-1}$,
		any $X_1$ can be conjugated by $g$ to the following matrix:
		\begin{equation}\label{eq:type C O_theta}
			X_1^\circ\simeq(I,...,I,s).
		\end{equation}
		
		Finally, the Jordan form of $X_1^\circ$ consists of $k$ many $m\times m$ Jordan blocks. 
		This matches with the table of $\uu_P$ in \cite[\S4.9]{YunEpipelagic} 
		for $\LG=\Sp_{2n}$.
		We conclude $\exp(X_1)\in\uu_P$.
	\end{proof}

	\subsection{$\Lg=\so_{2n+1}$, $m=n$ is even}\label{ss:0 mon Lg=so_2n+1 m=n even}\mbox{}
	
	Let $V=\bC^{2n+1}$ be a dimension $2n+1$ complex vector space with a fixed basis
	\[
	e_1,e_2,...,e_n,e_0,e_{-n},...,e_{-1}.
	\]
	Denote the Gram matrix by
	\[
	J=\begin{pmatrix}
		& &s\\
		&2& \\
		s& &
	\end{pmatrix},
	\]
	then
	\[
	\SO_{2n+1}=\{M\in\GL_{2n+1}|\ MJM^t=J \},\quad \so_{2n+1}=\{M\in\gl_{2n+1}|\ MJ+JM^t=0 \}.
	\]
	Explicitly:
	\[
	\so_{2n+1}=\{
	\begin{pmatrix}
		A & -sx^t & B\\
		2y & 0     & 2x\\
		C & -sy^t & D
	\end{pmatrix}
	\big|D=-sA^t s,\ B=-sB^t s,\ C=-sC^t s, x,y\in M_{1\times n}(\bC)
	\}.
	\]
	A Cartan subalgebra can be given by
	\[
	\fh=\{\diag(a_1,a_2,...,a_n,0,-a_n,...,-a_1)\big| a_i\in\bC\}.
	\]
	
	Let $\chi_i$ be the projection from $\fh$ to $a_i$. 
	The roots are $\pm\chi_i$, $\chi_i\pm\chi_j$, $i\neq j$. 
	If an element $X\in\so_{2n+1}$ is regular semisimple, 
	then after being conjugated into $\fh$, 
	it is non-vanishing under all the roots. 
	Equivalently, apart from one zero eigenvalue, 
	the rest of its $2n$ eigenvalues are all distinct from each other 
	and are nonzero.
	
	\begin{lem}\label{l:Lg=so_2n+1 X_1 in u_P}
		Let $\Lg=\so_{2n+1}$ and consider the stable grading 
		of even order $m=n$ on $\Lg$.
		Then for any stable vector $X=X_1+X_{1-m}$,
		$\exp(X_1)\in\uu_P$.
	\end{lem}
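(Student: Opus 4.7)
The plan is to parallel the strategy of Lemma \ref{l:Lg=sp X_1 in u_P} in the orthogonal setting. I would first read off the Kac coordinates of the order $m = n$ stable inner grading on $\Lg = \so_{2n+1}$ with $s_0 > 0$ from \cite[\S7]{RLYG}, and use $s_0 > 0$ to identify $\LG_0$ with the Levi of a standard parabolic $\hat{P_0} \subset \SO_{2n+1}$. This yields an explicit decomposition of $V = \bC^{2n+1}$ into a chain of isotropic blocks together with a one-dimensional anisotropic line in the middle forced by the odd dimension, so that $\LG_0$ is a product of $\GL$-factors acting on the isotropic pieces, and $\Lg_1 = \Lg(1) \oplus \Lg(1-m)$ is described by an $m$-tuple of off-diagonal blocks cyclically arranged, with the two blocks adjacent to the middle line constrained by the symmetric form and the remaining blocks free.

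Next, by Lemma \ref{l:st=rs}, a stable $X \in \Lg_1$ is regular semisimple in $\so_{2n+1}$; its spectrum consists of the forced zero eigenvalue of $\so_{2n+1}$ together with $2n$ distinct nonzero eigenvalues coming in $\pm$ pairs. This forces every free off-diagonal block of $X_1$ to be invertible and pins down the degeneracy of the two form-constrained blocks. Using the adjoint action of $\LG_0$ — an alternating sequence of left- and right-multiplications on the free blocks by the $\GL$-factors together with a congruence-type action on the constrained blocks — I would bring $X_1$ to a canonical normal form $X_1^\circ$ analogous to \eqref{eq:type C O_theta}: all free blocks become identity matrices and the constrained blocks become the standard anti-diagonal matrix of the symmetric pairing.

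Finally, I would read off the Jordan partition of $X_1^\circ$ from its cyclic block pattern and compare it with the Jordan partition of the Richardson class $\uu_P$ attached to $\hat{P_0}$ tabulated in \cite[\S4.9]{YunEpipelagic}; the coincidence yields $\exp(X_1) \in \uu_P$. The main obstacle, absent from the symplectic case, is the bookkeeping around the one-dimensional anisotropic middle line: the explicit form of the two adjacent form-constrained blocks and the Jordan-type calculation for $X_1^\circ$ through the middle both differ substantively from the purely isotropic symplectic setting and must be carried out with a carefully chosen basis at the center. Once the partition is seen to match that of $\uu_P$, the lemma follows.
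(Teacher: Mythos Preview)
Your overall strategy is correct and is precisely the one the paper uses: read the Kac coordinates, describe $\LG(0)$ and the block decomposition of $\Lg_1=\Lg(1)\oplus\Lg(1-m)$, use Lemma~\ref{l:st=rs} to get regular semisimplicity, normalize $X_1$ by the $\LG(0)$-action, and match the Jordan type with \cite[\S4.9]{YunEpipelagic}. However, your description of the block structure is inaccurate in a way that would derail the execution.

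For $m=n$ even the Kac coordinates from \cite[Table~12]{RLYG} are $s_0=s_1=s_3=\cdots=s_{n-1}=1$ and the rest zero, so the Levi is \emph{not} a product of equal-size $\GL$-factors flanking a one-dimensional anisotropic line. Rather $\LG(0)\simeq\GL_1\times\GL_2^{\,m/2-1}\times\SO_3$: there is a three-dimensional nondegenerate block in the middle and a one-dimensional isotropic block at the end. Consequently the blocks of $X_1$ are $A_1\in M_{1\times 2}$, $A_2,\dots,A_{m/2-1}\in M_{2\times 2}$, and $A_{m/2}\in M_{2\times 3}$ (with $X_{1-m}$ contributing $A_{m+1}\in M_{2\times 1}$). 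So your picture of ``free square blocks plus two form-constrained blocks adjacent to a middle line'' does not match; in particular $A_1$ and $A_{m/2}$ are non-square and cannot be normalized to identity or anti-diagonal matrices.

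The more delicate point you have not anticipated is the treatment of $A_1=(a_1,a_2)$. Regular semisimplicity immediately gives full rank of the $A_i$ (after reducing $A_{m/2}$ to $\begin{pmatrix}1&0&0\\0&0&1\end{pmatrix}$ and observing the middle row and column of $X$ vanish), but full rank of $A_1$ only says $(a_1,a_2)\neq(0,0)$. To normalize $A_1$ to $(1,1)$ one needs both $a_1\neq 0$ and $a_2\neq 0$; the paper obtains this by actually computing $\det(\lambda I-X)$ and checking that $a_i=0$ forces a repeated eigenvalue. Only then can one conjugate $X_1$ to the normal form $X_1^\circ\simeq((1,1),I,\dots,I,I_{2\times 3})$, whose Jordan type is $(1,\,n-1,\,n+1)$, matching $\uu_P$.
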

	\begin{proof}
		By \cite[Table 12]{RLYG}, 
		the Kac coordinates of this grading are given by
		\[
		s_i=
		\begin{cases}
			1, \qquad i=0,1,3,5,...,n-3,n-1;\\
			0, \qquad \mathrm{otherwise}. 
		\end{cases}
		\]
		Thus
		\begin{align*}
			\LG(0)=&\{
			\begin{pmatrix}
				M_1&      &       &         &                  &      & \\
				&\ddots&       &         &                  &      & \\
				&      &M_{m/2}&         &                  &      & \\
				&      &       &M_{m/2+1}&                  &      & \\
				&      &       &         &s(M_{m/2}^t)^{-1}s&      & \\
				&      &       &         &                  &\ddots& \\
				&      &       &         &                  &      &s(M_1^t)^{-1}s
			\end{pmatrix}
			\big|M_1\in\GL_1(\bC); \\
			&\quad M_i\in\GL_2(\bC), 2\leq i\leq \frac{m}{2}; M_{m/2+1}\in\SO_3
			\},
		\end{align*}
		and
		\begin{align*}
			\Lg_1=&\Lg(1)\oplus\Lg(1-m)\\
			=&\{
			\begin{pmatrix}
				0&A_1   &      &         &               &      & \\
				&\ddots&\ddots&         &               &      & \\
				&      &0     &A_{m/2}&               &      & \\
				&      &      &0        &-sA_{m/2}^t s&      & \\
				&      &      &         &\ddots         &\ddots&  \\
				A_{m+1}&      &      &         &               &0     &-sA_1^t s\\
				0&-sA_{m+1}^ts      &      &         &               &      &0
			\end{pmatrix}
			\big|A_i\in M_{2\times 2}(\bC), 2\leq i\leq \frac{m}{2}-1;\\
			&\quad A_1=(a_1,a_2)\in M_{1\times 2}(\bC), A_{m+1}=(b_1,b_2)^t\in M_{2\times 1}(\bC),A_{m/2}\in M_{2\times 3}(\bC)
			\},
		\end{align*}
		where $\Lg(1)$ consists of $m$ blocks in the upper triangular part, $\Lg(1-m)$ consists of $2$ blocks in the lower triangular part. 
		
		For simplicity of notations, 
		in the following we will denote an element of $\LG(0)$ by 
		$g\simeq (M_1,...,M_{m/2+1})$, 
		and an element of $\Lg_1$ (resp. $\Lg(1)$) by 
		$X\simeq (A_1,...,A_{m/2};A_{m+1})$ (resp. $X_1\simeq(A_1,...,A_{m/2})$).
		Denote $I_{2\times 3}=\begin{pmatrix}
			1&0&0\\
			0&0&1
		\end{pmatrix}$.
		
		Let $X=X_1+X_{1-m}\simeq(A_1,...,A_{m/2};A_{m+1})\in\Lg_1$ 
		be a stable vector.
		By Lemma \ref{l:st=rs}, $X$ is regular semisimple,
		thus has exactly one zero eigenvalue and distinct nonzero eigenvalues.
		By $\LG(0)$-conjugation, we may assume $A_{m/2}$ is upper triangular. 
		Note that if any row, the first column, or the third column of $A_{m/2}$ is zero, so is the transpose in $-sA_{m/2}^ts$. 
		In that case $\det(\lambda I-X)$ has at least two zero roots, 
		and $X$ cannot be regular semisimple. 
		Thus $A_{m/2}$ must have rank two
		with its first and third columns being nonzero.
		As the $\LG(0)$-conjugation on $X_1$ acts as $M_{m/2}A_{m/2}M_{m/2+1}^{-1}$ on $A_{m/2}$, 
		we can always reduce $A_{m/2}$ to $I_{2\times 3}$, 
		so we will assume $A_{m/2}=I_{2\times 3}$. 
		Then the $n+1$-th row and column of $X$ are zero.
		Since $X$ has exactly one zero eigenvalue,
		it follows that all the $A_i$'s are of full rank.
		By $\LG(0)$-conjugation, 
		we may assume $A_i=I$, $2\leq i\leq\frac{m}{2}-1$.
		
		For $A_1=(a_1,a_2)$, denote $A_{m+1}=(b_1,b_2)^t$, $X\simeq((a_1,a_2),I,...,I;(b_1,b_2)^t)$. 
		Then
		\[
		\det(\lambda I-X)=\lambda(\lambda^{2n}+2(-1)^{\frac{n}{2}}(a_1b_1+a_2b_2)\lambda^n+(a_1b_1-a_2b_2)^2).
		\]
		
		If $a_2=0$, then 
		$\det(\lambda I-X)=\lambda(\lambda^n+(-1)^{\frac{n}{2}}a_1b_1)^2$. 
		so that $X$ has repeated eigenvalues and cannot be regular semisimple. 
		Thus we must have $a_2\neq0$. 
		Similarly, $a_1\neq0$.
		Using conjugation by $g\simeq(M_1,M_2,...,M_{m/2+1})$ where $M_2=M_3=\cdots=M_{m/2}=\diag(a,a^{-1}),M_{m/2+1}=\diag(a,1,a^{-1})$, 
		we can make $A_1=(1,1)$. 
		We obtain that $X_1$ is always $\LG(0)$-conjugated to the following matrix:
		\begin{equation}\label{eq:type B m=n even O_theta}
			X_1^\circ\simeq((1,1),I,...,I,I_{2\times 3}).
		\end{equation}
		
		Finally, the Jordan form of $X_1^\circ$ has one size $1$ block, one size $n-1$ block, and one size $n+1$ block. 
		This matches with the table in \cite[\S4.9]{YunEpipelagic} for $\LG=\SO_{2n+1}$, $m=n$ even.
	\end{proof}

	\subsection{More examples}\label{ss:s_0=0 ex with corr mon}
	For stable gradings on $\Lg=\so_{2n+1}$ where $s_0=0$,
	the stable grading on $\fg=\fsp_{2n}$ with the same order 
	always has $s_0>0$.
	We give examples of stable vectors $X=X_1+X_{1-m}$
	that satisfies $\exp(X_1)\in\uu_P$.
	However, we cannot yet show this for arbitrary stable vectors.

	\subsubsection{$\Lg=\so_{2n+1}$, $s_0=0$, $k=\frac{2n}{m}>2$ is even}
	From \cite[Tabel 12]{RLYG}, the Kac coordinates are given by
	\[
	s_i=
	\begin{cases}
		1, \qquad i=kr-\frac{k}{2}, 1\leq r\leq \frac{n}{k}=\frac{m}{2};\\
		0, \qquad \mathrm{otherwise}. 
	\end{cases}
	\]
	
	When $m>2$, the shape of $\Lg_1$ is similar as that for 
	$\Lg=\so_{2n+1}$ and $m=n$ even,
	except the sizes of blocks are now given by
	$A_1\in M_{k/2\times k}(\bC)$, 
	$A_i\in M_{k\times k}(\bC)$ for $2\leq i\leq \frac{m}{2}-1$,
	$A_{m/2}\in M_{k\times(k+1)}(\bC)$, 
	$A_{m+1}\in M_{k\times k/2}(\bC)$.
	Let 
	$A_1=(I,0_{k/2\times k/2})$, 
	$A_i=I$ for $2\leq i\leq\frac{m}{2}-1$, 
	$A_{m/2}=(I,(1,0,...,0)^t)$, 
	$A_{m+1}=(I,0_{k/2\times k/2})^t$.
	
	When $m=2$, the only difference is that $A_1=A_{m/2}\in M_{k/2\times(k+1)}(\bC)$, 
	where we let $A_1=(I,(1,0,...,0)^t,0_{k/2\times k/2})^t$.
	
	It is straightforward to check that $X$ is regular semisimple
	and $X_1$ has the same Jordan type as $\uu_P$ 
	given in the table in \cite[\S4.9]{YunEpipelagic}.

	\subsubsection{$\Lg=\so_{2n+1}$, $s_0=0$, $k=\frac{2n}{m}>1$ is odd}
	From \cite[Tabel 12]{RLYG}, the Kac coordinates are given by
	\[
	s_i=
	\begin{cases}
		1, \qquad i=kt-\frac{k-1}{2}, 1\leq t\leq \frac{n}{k}=\frac{m}{2};\\
		0, \qquad \mathrm{otherwise}. 
	\end{cases}
	\]
	
	The shape of $\Lg_1$ is similar as the case of $k$ even, 
	except the sizes of blocks are now given by
	$A_1\in M_{(k+1)/2\times k}(\bC)$,
	$A_i\in M_{k\times k}(\bC)$ for $2\leq i\leq \frac{m}{2}$,
	$A_{m+1}\in M_{k\times (k+1)/2}(\bC)$.
	Let 
	$A_1=(I,0_{(k+1)/2\times(k-1)/2})$,
	$A_i=I$ for $2\leq i\leq \frac{m}{2}$,
	$A_{m+1}=\begin{pmatrix}A\\ B\end{pmatrix}$ where
	$A=(0_{(k-1)/2\times 1},I)\in M_{(k-1)/2\times(k+1)/2}(\bC)$
	and $B=E_{(k+1)/2,1}\in M_{(k+1)/2\times(k+1)/2}(\bC)$.
	
	It is straightforward to check that $X$ is regular semisimple
	and $X_1$ has the same Jordan type as $\uu_P$ 
	given in the table in \cite[\S4.9]{YunEpipelagic}.

	\section{Local monodromy of $\theta$-connections at $\infty$}\label{s:theta conn local monodromy at infty}
	
	\begin{prop}\label{p:theta conn at infty}
		Let $X\in\Lg_1^\st$ be a stable vector for a stable grading of order $m$,
		$\nabla^X$ the associated $\theta$-connection.
		Let $\nabla^X_\infty=\nabla^X|_{D_\infty^\times}$
		be the restriction to $D_\infty^\times=\Spec\ \bC(\!(s)\!)$, $s=t^{-1}$.
		\begin{itemize}
			\item [(i)]
			The slope of $\nabla^X_\infty$ is $\frac{1}{m}$.
			
			\item [(ii)]
			Let $\nabla^{X,\Ad}_\infty$ be the associated $\GL(\Lg)$-connection
			for the adjoint representation. 
			The irregularity of $\nabla^{X,\Ad}_\infty$ is
			\[
			\mathrm{Irr}(\nabla^{X,\Ad}_\infty)=\frac{|\Phi|}{m},
			\]
			where $\Phi$ is the set of roots of $\Lg$.
			
			\item [(iii)]
			Let $u=s^{1/m}$. 
			The canonical form of $\nabla^X_\infty$ is
			\begin{equation}\label{eq:infty Jordan}
				\td-mX\frac{\td u}{u^2}.
			\end{equation}
			
			\item [(iv)]
			The wild inertia group of $\nabla^X_\infty$
			is the smallest torus $S$ whose Lie algebra contains $X$.
			Let $\cT_X$ be the unique maximal torus containing $S$.
			Then a generator of the tame inertia group,
			which is contained in $N_{\LG}(\cT_X)$,
			maps into the unique $\bZ$-regular elliptic class of order $m$
			in the Weyl group $W=N_{\LG}(\cT_X)/\cT_X$.
			
			\item [(v)]
			The connection $\nabla^X_\infty$ is irreducible.
		\end{itemize}
	\end{prop}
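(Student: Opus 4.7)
The plan is to pull $\nabla^X_\infty$ back along the $m$-fold ramified cover $s = u^m$ and then apply a clearing gauge transformation by $g = u^{-\clambda}$, where $\clambda = mx$ is the cocharacter refining the stable grading. Since $X_k \in \Lg(k)$ has $\clambda$-weight $k$, one has $u^{-\clambda} X_k u^{\clambda} = u^{-k} X_k$; combined with $\td t / t = -\td s/s$ and $\td s/s = m \td u/u$, a direct computation will transform
\[
\nabla^X_\infty \;=\; \td - X_1 \frac{\td s}{s} - X_{1-m} \frac{\td s}{s^2}
\]
into the Turrittin-style form
\[
\widetilde\nabla \;=\; \td + \clambda \frac{\td u}{u} - mX \frac{\td u}{u^2}.
\]
This single identity is what I expect to drive the whole proposition.

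Parts (i)--(iii) then essentially read off $\widetilde\nabla$. By Lemma~\ref{l:st=rs}, stability of $X$ gives regular semisimplicity, so the leading polar term $-mX$ is regular semisimple and $\widetilde\nabla$ has slope $1$ in $u$; this descends to slope $1/m$ in $s$, yielding (i), and the polar part is precisely $-mX\,\td u/u^2$ as in (iii). For (ii), $\ad(X)$ decomposes $\Lg$ into $|\Phi|$ one-dimensional nonzero eigenspaces (each contributing slope $1/m$) and a $\rank(\Lg)$-dimensional zero eigenspace (contributing nothing), giving $\Irr(\nabla^{X,\Ad}_\infty) = |\Phi|/m$.

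For (iv), the centralizer of the regular semisimple element $X$ is the unique maximal torus $\cT_X$, and the smallest torus $S \subset \cT_X$ whose Lie algebra contains $X$ is, by construction, the wild inertia read off from $\widetilde\nabla$. The tame generator arises from combining the deck transformation $u \mapsto \zeta_m u$ with the clearing gauge $u^{-\clambda}$: it lies in $N_{\LG}(\cT_X)$, and its image in $W = N_{\LG}(\cT_X)/\cT_X$ is governed by $\clambda$ together with the $\zeta_m$-twist. By \cite[Corollary 14]{RLYG} the stable inner grading of order $m$ corresponds to the unique $\bZ$-regular elliptic conjugacy class of order $m$ in $W$, which pins down the tame generator as required. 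Part (v) follows at once: any reduction of $\nabla^X_\infty$ to a proper parabolic $\check P \subsetneq \LG$ would force the tame generator into $\check P$ and hence its image in $W$ into a proper parabolic subgroup of $W$, contradicting ellipticity.

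The hard step is (iv): carefully tracking the deck transformation $u \mapsto \zeta_m u$ through the gauge $u^{-\clambda}$ and identifying the resulting element of $N_{\LG}(\cT_X)/\cT_X$ with the $\bZ$-regular elliptic class of order $m$ from Reeder--Levy--Yu. Once this identification is in place, (v) follows in one line and (i)--(iii) are bookkeeping from the key identity above.
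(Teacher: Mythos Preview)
Your approach matches the paper's for (i), (ii), and (v): the same pullback-and-gauge computation, the same eigenvalue count for $\ad(X)$, and essentially the same ellipticity argument for irreducibility. The gap is in (iii). The form
\[
\widetilde\nabla \;=\; \td - mX\,\frac{\td u}{u^2} \pm \clambda\,\frac{\td u}{u}
\]
is \emph{not} yet a canonical form: the residue $\clambda$ does not commute with $X$ (it acts with weight $1$ on $X_1$ and $1-m$ on $X_{1-m}$), whereas a canonical form requires the residue to commute with the leading polar term and hence to lie in $\ct_X$. The statement of (iii) asserts the full canonical form with \emph{zero} residue, not merely that the irregular part is $-mX\,\td u/u^2$. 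Using \cite[\S9.3, Proposition~2]{BV} the paper first reduces to $\td - mX\,\td u/u^2 + y\,\td u/u$ with $y\in\ct_X$, but a priori $y\neq 0$. To kill $y$, the paper invokes the Galois-descent result \cite[\S9.8, Proposition]{BV}: the element $\sigma = g(\zeta_m u)g(u)^{-1}$ is constant, fixes $y$, satisfies $\Ad_\sigma X = \zeta_m^{-1}X$, and hence has the same image in $W=N_{\LG}(\cT_X)/\cT_X$ as $\clambda(\zeta_m)$, i.e.\ lies in the $\bZ$-regular elliptic class of order $m$. Ellipticity then forces $\ct_X^\sigma = 0$, so $y=0$.

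Thus (iii) is not bookkeeping independent of (iv): both rest on identifying the tame generator with the regular elliptic class, and the paper proves them by the same descent mechanism. Your instinct that (iv) is the hard step is correct, but the vanishing of the residue in (iii) already requires that hard step---it does not fall out of the key identity alone. You should reorganize so that the descent argument (producing $\sigma$ and its $W$-class) comes first, and then deduce both $y=0$ for (iii) and the content of (iv) from it.
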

	\begin{proof}
		(i):
		Let $s=t^{-1}$ be a coordinate around $\infty$. 
		The restriction of the $\theta$-connection $\nabla^X$ \eqref{eq:theta conn} 
		to $D_\infty^\times$ is
		\[
		\nabla^X_\infty=\td-\sum_k X_ks^{\frac{k-1}{m}}\frac{\td s}{s}=\td-(X_1+X_{1-m}s^{-1})\frac{\td s}{s}.
		\]
		Recall $\clambda=mx$ defines the grading, 
		where $\clambda:\bGm\ra\cT$. 
		Take substitution $s=u^m$ and apply gauge transformation by $\clambda(u^{-1})$, we get
		\begin{equation}\label{eq:pre Jordan form}
			\td-m(X_1+X_{1-m})\frac{\td u}{u^2}-\clambda\frac{\td u}{u},
		\end{equation}
		where $X_1+X_{1-m}=X\in\Lg_1$ is a regular semisimple vector. 
		Thus the maximal slope of $\nabla^X_\infty$ is $\frac{1}{m}$.\\
		
		(ii):
		Let $\ct_X$ be the centralizer of $X$, which is a Cartan subalgebra. 
		Let $\LG[\![u]\!]_1=\ker(\LG[\![u]\!]\xrightarrow{u=0} \LG)$.
		From \cite[\S 9.3 Proposition 2]{BV}, 
		the above connection can be $\LG[\![u]\!]_1$-gauge transformed 
		to a connection of the form
		\[
		\td-mX\frac{\td u}{u^2}+(y+uY(u))\frac{\td u}{u}
		\]
		where $y\in\ct_X$, $Y(u)\in\ct_X[\![u]\!]$. 
		We can eliminate $Y(u)$ using the gauge transform given by 
		$\exp(\int Y(u))\in\cT_X[\![u]\!]$. 
		Note that $\int Y(u)\in u\ct_X[\![u]\!]$, 
		so $\exp(\int Y(u))\in\cT_X[\![u]\!]$ is well defined. 
		Thus $\nabla^X_\infty$ is $\LG(\!(u)\!)$-gauge equivalent 
		to a connection of the following form:
		\begin{equation}\label{eq:infty Jordan 0}
			\td-mX\frac{\td u}{u^2}+y\frac{\td u}{u},\qquad y\in\ct_X.
		\end{equation}
		From the above, 
		since the total multiplicity of the nonzero eigenvalues 
		of the adjoint action of the regular semisimple element $X$
	    on $\Lg$ is $|\Phi|$, 
	    we obtain that the adjoint irregularity of $\nabla^X_\infty$ is
		$\frac{|\Phi|}{m}$.\\
		
		(iii):
		Recall we have shown that $\nabla^X_\infty$ can be gauge transformed 
		by an element $g\in\LG(\bC(\!(u)\!))$ 
		to a canonical form \eqref{eq:infty Jordan 0}:
		\[
		\td-mX\frac{\td u}{u^2}+y\frac{\td u}{u}=\td-Xs^{-1-\frac{1}{m}}\td s+\frac{1}{m}ys^{-1}\td s,\qquad y\in\ct_X.
		\]
		We want to show $y=0$. 
		Applying \cite[\S8.6 Proposition]{BV} to $\ct_X$,
		we can assume $y$ is weakly $\bZ$-reduced in $\ct_X$ 
		in the sense of \cite[\S8.6 Definition]{BV}.  
		Thus we can apply \cite[\S9.8 Proposition]{BV} to conclude that 
		the element $\sigma=g(\zeta_m u)g(u)^{-1}$ is constant, 
		i.e. contained in $\LG$, and it satisfies
		\[
		\sigma^m=1,\quad \Ad_\sigma y=y,\quad \Ad_\sigma X=\zeta_m^{-1}X.
		\]
		
		Let $\gamma=\sigma^{-1}\in\LG$. 
		Since $X$ is an eigenvector of $\gamma$, 
		$\gamma$ normalizes the centralizer $\cT_X$ of $X$, 
		i.e. $\gamma\in N_{\LG}(\cT_X)$.
		Note that the product $\sigma\clambda(\zeta_m)$ fixes $X$,
		thus $\sigma\clambda(\zeta_m)\in\cT_X$.
		Therefore $\gamma=\sigma^{-1}$ has the same image 
		in $W=N_{\LG}(\cT_X)/\cT_X$ as $\clambda(\zeta)$,
		i.e. in the unique $\bZ$-regular elliptic order $m$ class. 
		Therefore $\gamma$ has no nonzero fixed vector in $\ct_X$, 
		which implies that $y=0$.\\
		
		(iv):
		Denote the differential Galois group for a Laurent field $K$ by $\pi_{\diff}(K)$. 
		We have an exact sequence \cite[2.6.1.2]{KatzDGal}
		\[
		1\ra \pi_{\diff}(\bC(\!(u)\!))\ra \pi_{\diff}(\bC(\!(s)\!))\ra \mu_m\ra 1.
		\]
		
		Let $\rho:\pi_{\diff}(\bC(\!(s)\!))\ra\LG$ be the representation 
		corresponding to $\nabla_\infty^X$, 
		then $\rho\big|_{\pi_{\diff}(\bC(\!(u)\!))}$ corresponds to 
		\eqref{eq:infty Jordan}. 
		Let $S$ be the smallest subtorus $S\subset\cT_X$ 
		whose Lie algebra contains $X$. 
		We know from \cite[2.6.4.2]{KatzDGal} that 
		$\rho(\pi_{\diff}(\bC(\!(u)\!)))$ is a connected torus, 
		which is contained in $S$. 
		Also, the Lie algebra of $\rho(\pi_{\diff}(\bC(\!(u)\!)))$ must contain $X$.
		Thus $\rho(\pi_{\diff}(\bC(\!(u)\!)))=S$. 
		
		Moreover, let $\gamma:u\ra\zeta_m u$ be an automorphism of $\bC(\!(u)\!)$ 
		that generates $\mathrm{Gal}(\bC(\!(u)\!)/\bC(\!(s)\!))\simeq\mu_m$. 
		Denote a preimage of $\gamma$ in $\pi_{\diff}(\bC(\!(s)\!))$ 
		still by $\gamma$, 
		then it has adjoint action on $\pi_{\diff}(\bC(\!(u)\!))$. 
		We see $\rho(\pi_{\diff}(\bC(\!(s)\!)))$ is generated by 
		$S$ and $\rho(\gamma)$.
		
		Precomposing 
		$\rho:\pi_{\diff}(\bC(\!(u)\!))\ra\LG$ given by \eqref{eq:infty Jordan}
		with the adjoint action by $\gamma$, we get
		\[
		\td-mX\zeta_m^{-1}\frac{\td u}{u^2}.
		\]
		Since $\rho\circ\Ad_{\gamma}=\Ad_{\rho(\gamma)}\circ\rho:\pi_{\diff}(\bC(\!(u)\!))\ra S\hookrightarrow \LG$, 
		the adjoint action of $\rho(\gamma)$ on $\Lie S$ satisfies
		\[
		\Ad_{\rho(\gamma)}X=\zeta_m^{-1}X.
		\]
		Therefore $\rho(\gamma)\in N_{\LG}(\cT_X)$. 
		Note that $X$ is also an eigenvector with eigenvalue $\zeta_m^{-1}$ 
		for $\clambda(\zeta_m)^{-1}$. 
		Thus $\rho(\gamma)\clambda(\zeta_m)$ fixes $X$, $\rho(\gamma)\clambda(\zeta_m)\in\cT_X$, 
		$\rho(\gamma)$ and $\clambda(\zeta_m)^{-1}$ have the same image 
		in the Weyl group $W=N_{\LG}(\cT_X)/\cT_X$, 
		which belongs to the unique $\bZ$-regular elliptic order $m$ class.\\
		
		(v):
		The differential Galois group of $\nabla^X_\infty$ 
		is generated by the smallest torus $S$ 
		whose Lie algebra contains the regular semisimple element $X$, 
		together with an element $\rho(\gamma)$ that normalizes 
		the maximal torus $\cT_X$ containing $S$, 
		whose image in the Weyl group $N_{\LG}(\cT_X)/\cT_X$ belongs to 
		the $\bZ$-regular elliptic class corresponding to the stable grading $\theta$.
		Then the proof of irreducibility is the same as \cite[Lemma 7]{YiFG},
		as it only relies on the above-mentioned properties. 
	\end{proof}
	
	\begin{rem}
		We learned that essentially the same proof has been independently discovered 
     	and used in the work of Kamgarpour-Sage on Coxeter connections
		 \cite[Theorem 35, Corollary 36]{KSGiff}.
	\end{rem}

	\section{Local opers with slope bounded by $\frac{1}{m}$}\label{s:local opers with slope 1/m}
	An oper is a connection on a $\LG$-bundle together with a Borel reduction
	of the bundle satisfying certain transversality condition\cite{BD}.
	It plays a central role in the geometric Langlands correspondence,
	where the Hecke eigensheaf can be constructed for connections with oper structures, c.f. \cite{BD,Zhu}.
	
	In this section we study opers on the punctured formal disk $D_\infty^\times$
	whose underlying connection is $\nabla^X_\infty$. 
	This will be one of the ingredients in the construction of the Hecke eigensheaf
	of $\nabla^X$.
	
	\subsection{}\label{ss:local opers}
	Let $\Op_{\Lg}(D_\infty)_{\leq 1/m}$ be the space of opers 
	with slopes smaller or equal to $\frac{1}{m}$.
	Since $\nabla^X_\infty$ has slope $\frac{1}{m}$,
	$\chi_\infty\in\Op_{\Lg}(D_\infty)_{\leq 1/m}$.
	This space of opers can be described as a central support.
	Let $\fZ$ be the center of the completed enveloping algebra
	of the affine Kac-Moody algebra $\hg$ at critical level.
	Feigin-Frenkel isomorphism gives
	$\Spec\fZ\simeq\Op_{\Lg}(D_\infty^\times)$.
	Let $\fp,\fp(k)$ be the Lie algebras of 
	the parahoric subgroup $P$ and its Moy-Prasad subgroups $P(k)$, $k\geq0$. 
	Denote $\Vac_{\fp(2)}=\Ind_{\fp(2)+\bC1}^{\hg}\bone$
	and
	$\fZ_{\fp(2)}=\mathrm{Im}(\fZ\rightarrow\End\Vac_{\fp(2)})$.
	\begin{lem}\label{l:oper slope 1/m}
		For an admissible parahoric $P$, 
		$\Spec\fZ_{\fp(2)}$ coincides
		with $\Op_{\Lg}(D_\infty)_{\leq 1/m}$ inside $\Op_{\Lg}(D_\infty^\times)$.
	\end{lem}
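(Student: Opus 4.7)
The plan is to use the Feigin-Frenkel isomorphism $\Spec \fZ \simeq \Op_{\Lg}(D_\infty^\times)$ to recast the claim as an equality between two closed subschemes of $\Op_{\Lg}(D_\infty^\times)$. On one hand, $\Spec \fZ_{\fp(2)}$ is the vanishing locus of the kernel of the surjection $\fZ \twoheadrightarrow \fZ_{\fp(2)}$, equivalently the locus of $\chi$ whose corresponding central character annihilates the highest-weight vector $|0\rangle \in \Vac_{\fp(2)}$. On the other, $\Op_{\Lg}(D_\infty)_{\leq 1/m}$ is cut out inside $\Op_{\Lg}(D_\infty^\times)$ by explicit pole-order bounds on oper coefficients in a local trivialization.

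For the inclusion $\Spec \fZ_{\fp(2)} \subseteq \Op_{\Lg}(D_\infty)_{\leq 1/m}$, I would use that $\Vac_{\fp(2)}$ is cyclic with cyclic vector $|0\rangle$ annihilated by $\fp(2)+\bC K$, so the action of $\fZ$ on this module is determined by its action on $|0\rangle$, and $z \in \fZ$ lies in the kernel exactly when $z|0\rangle \in \fp(2)\cdot \Vac_{\fp(2)}$. Choosing homogeneous Segal-Sugawara generators of $\fZ$ whose degrees under the rotational grading defined by $x = \clambda/m$ equal the exponents of $\Lg$ plus one, I would compute the image of each generator on $|0\rangle$ and match the Moy-Prasad depth $2/m$ of $\fp(2)$, via Feigin-Frenkel, to a slope bound of $1/m$ on the corresponding opers. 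This is a filtration-wise generalization of the standard Iwahori case $m=h$.

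The reverse inclusion $\Op_{\Lg}(D_\infty)_{\leq 1/m} \subseteq \Spec \fZ_{\fp(2)}$ should follow by passing to the classical (associated graded) level. With respect to the PBW filtration on $U(\hg)$, the associated graded of $\Vac_{\fp(2)}$ is $\Sym(\hg/(\fp(2)+\bC K))$, and the image of the classical center in it, by compatibility of Feigin-Frenkel with PBW filtrations, recovers the coordinate ring of classical opers of slope at most $1/m$. Since both the quantum center and $\Op_{\Lg}(D_\infty)_{\leq 1/m}$ are flat deformations of their classical counterparts, equality at the graded level promotes to the equality of subschemes asserted in the lemma.

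The main obstacle I foresee is the precise matching in the first step of the Moy-Prasad depth $2/m$ with the slope cutoff $1/m$, without any off-by-one indexing error. The cleanest route is likely to exploit the $\bGm$-equivariance of the Feigin-Frenkel isomorphism under the rotation action $s\mapsto \zeta s$, which identifies the rotational grading on $\fZ$ with the grading by pole order on oper coefficients; the exact cutoff can then be read off from the Kac coordinates of $x$ together with the standard formula relating the slope of an oper to the degrees and pole orders of its canonical coefficients.
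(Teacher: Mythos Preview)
Your proposal sketches a direct proof, whereas the paper's argument is a two-line reduction: it observes (via \cite[Corollary 14]{RLYG}) that an admissible parahoric corresponds to a \emph{principal} grading, and then invokes \cite[Lemma 16]{Zhu}, which already establishes the identification $\Spec\fZ_{\fp(2)}\simeq\Op_{\Lg}(D_\infty)_{\leq 1/m}$ for principal parahorics. The footnote also points to \cite[Proposition 28.(i)]{KXY} as an alternative reference. So the paper does not re-derive the slope/depth matching at all; it imports it wholesale once the principality hypothesis is verified.

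Your outline is broadly the shape of how one would prove the cited result from scratch, and the ingredients you name (Segal-Sugawara generators, rotational $\bGm$-equivariance of Feigin-Frenkel, passage to associated graded) are the right ones. Two points are worth flagging. First, you never use the admissibility hypothesis; in the paper this is exactly what guarantees the grading is principal, i.e.\ that a $\theta$-adapted principal $\Sl_2$-triple exists, which is what makes the pole-order bookkeeping for $\fp(2)^\perp$ line up with the slope formula for opers (this is the content behind \cite[Lemma 16]{Zhu} and \cite[Proposition 13]{Zhu}). Without principality the dimensions need not match and your first inclusion could fail to be sharp. Second, your reverse inclusion via ``flat deformation'' is underspecified: what one actually checks is that $\gr\fZ_{\fp(2)}$ surjects onto $\Fun\Hit(D_\infty)_{\fp(2)}$ and that the latter agrees with the associated graded of $\Fun\Op_{\Lg}(D_\infty)_{\leq 1/m}$; equality of associated gradeds for filtered surjections then gives equality upstairs. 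This is correct in spirit but requires the explicit Hitchin-image computation for $\fp(2)$, which again relies on principality.
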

	\begin{proof}
		From \cite[Corollary 14]{RLYG}, 
		any stable grading is principal. 
		Thus the lemma follows from \cite[Lemma 16]{Zhu}
		\footnote{An alternative proof can be given by the argument 
			in \cite[Proposition 28.(i)]{KXY}}.
	\end{proof}

	\subsection{}
	We collect some corollaries of results in \cite[\S9]{BV} we will need. 
	A formal $\LG$-connection $\nabla$ on $D_\infty^\times$ 
	can be written as
	\begin{equation}\label{eq:connection form}
		\td+(A_rs^r+A_{r+1}s^{r+1}+\cdots)\td s,\quad A_i\in\Lg,\ r\in\bZ.
	\end{equation}
	
	A canonical form of $\nabla$ is an equation of the following form:
	\begin{equation}\label{eq:canonical form}
		\td+(D_{r_1}s^{r_1}+\cdots+D_{r_N}s^{r_N}+Cs^{-1})\td s,
	\end{equation}
	where $r_1<r_2<\cdots<r_N<-1$ are rational numbers, 
	$D_i,C\in\Lg$ are mutually commutative elements, 
	and $D_i$'s are all semisimple. 
	The main theorem of \cite[\S9.5]{BV} tells us 
	a formal connection $\nabla$ can always be transformed to a canonical form 
	by some element in $\LG(\bC(\!(s^{1/b})\!))$ for some $b\in\bN^+$,
	and the polar part $D_i$'s of the canonical form 
	is unique up to a constant transform.
	
	\begin{lem}\label{l:principal deg of nil}
		Let $\Lg$ be a reductive Lie algebra. 
		For a formal connection $\nabla$ with equation \eqref{eq:connection form}, 
		if $A_r\neq 0$ is nilpotent and $r<-1$, 
		then $r<r_1$ in its canonical form \eqref{eq:canonical form}
	\end{lem}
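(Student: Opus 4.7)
I would pass to a gauge-invariant characterization of $r_1$ in terms of the maximal slope of $\nabla$, and then use the nilpotence of $A_r$ to show that the naive leading polar order $r$ is strictly reducible.

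First, I would observe that the maximal slope of $\nabla$ is an intrinsic invariant (independent of the gauge transformations in $\LG(\bC(\!(s^{1/b})\!))$ used in the BV reduction), and that reading it off from the canonical form \eqref{eq:canonical form} yields $-r_1-1$. This follows from the shape of \eqref{eq:canonical form}: the elements $D_{r_1},\ldots,D_{r_N}$ are semisimple, mutually commuting, and nonzero, so after simultaneous diagonalization in a faithful representation they contribute characters with distinct exponential factors of orders $-r_i-1$, pinning down the full slope decomposition.

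Next, from the original expansion \eqref{eq:connection form}, I would show that when $A_r$ is nilpotent and $r<-1$, the maximal slope of $\nabla$ is strictly less than $-r-1$. Fix a faithful representation $\rho$ of $\LG$; then the matrix connection $\rho(\nabla)$ has leading coefficient $\rho(A_r)$, which is nilpotent in $\gl(V)$. A classical Newton polygon argument applied to the characteristic polynomial of the leading term shows that the slopes of a vector-bundle connection with nilpotent leading coefficient at order $r$ are strictly less than $-r-1$. Equivalently, one can embed $A_r$ into a Jacobson--Morozov $\Sl_2$-triple $(A_r,H,F)\subset\Lg$ and apply the shearing gauge transformation generated by the cocharacter corresponding to $H/2$; since $[H,A_r]=2A_r$, this strictly reduces the polar order of the leading term, and iterating the Levelt--Turrittin procedure yields the same bound.

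Combining these two steps gives $-r_1-1<-r-1$, i.e., $r<r_1$, as required.

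The main obstacle is controlling the effect of the shearing gauge transformation on the lower-order terms, which could in principle produce new, more polar contributions. The cleanest way around this is to avoid performing the shearing explicitly and instead invoke the gauge-invariance of the maximal slope, reducing the entire problem to the classical Newton polygon statement on a faithful representation; otherwise, one must iterate the BV reduction and check that the polar order is monotonically decreased at each step.
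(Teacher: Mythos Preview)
Your argument is correct, and it is essentially the same as the paper's: the paper simply cites \cite[Proposition~9.4]{BV} (after the normalization in \cite[\S9.3, Proposition~2]{BV}), and your slope-invariance plus Jacobson--Morozov shearing is exactly the mechanism behind that proposition. Two minor imprecisions worth fixing: the phrase ``Newton polygon of the characteristic polynomial of the leading term'' does not carry the information you need (that polynomial is just $\lambda^n$ when $A_r$ is nilpotent), so your first alternative should instead invoke the classical Turrittin reduction for $\GL_n$-connections directly; and the cocharacter used in the shearing should be the one with derivative $H$, not $H/2$, since the latter need not be integral.
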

	\begin{proof}
		This follows from \cite[Proposition 9.4]{BV}. 
		Note that we can always transform the connection 
		to let it satisfy the assumptions of Proposition 9.4 
		using \cite[\S9.3 Proposition 2]{BV}.
	\end{proof}
	
	\begin{prop}\label{p:ss part of top polar coeff}
		Let $\nabla$ be an irregular formal connection 
		with equation \eqref{eq:connection form}, 
		where $r<-1$ and $A_r\neq 0$. 
		Let $A_r=A_s+A_n$ be the Jordan decomposition, 
		where $A_s$ is semisimple, $A_n$ is nilpotent, and $[A_s,A_n]=0$. 
		Assume $A_s\neq 0$. 
		Then in a canonical form \eqref{eq:canonical form} of $\nabla$, 
		$r_1=r$ and $D_{r_1}=A_s$.
	\end{prop}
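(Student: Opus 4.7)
The plan is to reduce to Lemma \ref{l:principal deg of nil} by decomposing $\nabla$ along the Levi attached to the semisimple part $X_s$. First I would use \cite[\S9.3 Proposition 2]{BV} to gauge-transform $\nabla$ so that every coefficient $A_i$ lies in the centralizer $\fl := \fz_\Lg(X_s)$, which is reductive. Since $X_s$ is semisimple, $\Lg$ decomposes under $\ad X_s$ as $\fl \oplus \fm$, with $\fm$ the sum of nonzero eigenspaces; on each eigenspace $\Lg_\alpha \subset \fm$ both $\ad X_s$ and the operator $\alpha - \ad X_n$ are invertible (the latter because $[X_s,X_n]=0$ implies $\ad X_n$ preserves $\Lg_\alpha$ and acts nilpotently there, so scalar plus nilpotent is invertible). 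This lets me cancel the $\fm$-component of $A_k$ order by order via gauges of the form $\exp(s^{k-r} w_k)$ with $w_k \in \fm$; the auxiliary term $g^{-1} \td g = (k-r) s^{k-r-1} w_k \, \td s$ lives at order $k - r - 1 > k$ thanks to $r < -1$, so the induction closes and the leading term $A_r = X_s + X_n \in \fl$ is unchanged.

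Next I would split $\fl = \fz(\fl) \oplus [\fl, \fl]$ as a sum of ideals and correspondingly decompose $\nabla = \nabla^{\fz} \oplus \nabla^{[\fl,\fl]}$. Since $\fz(\fl) \subset \Lg$ consists of semisimple elements (as $\fl$ is reductive), the projection of the nilpotent $X_n$ to $\fz(\fl)$ would be a semisimple summand of $X_n$ and therefore must vanish, so $X_n \in [\fl, \fl]$. Thus $\nabla^{\fz}$ has leading term $X_s s^r$ and $\nabla^{[\fl,\fl]}$ has leading term $X_n s^r$. For $\nabla^{\fz}$, valued in the abelian algebra $\fz(\fl)$, a canonical form is just the polar part of its Laurent expansion, contributing exponent $r$ with coefficient $X_s$ and further exponents in $(r,-1]$. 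For $\nabla^{[\fl, \fl]}$ the leading term is nilpotent in the reductive algebra $[\fl, \fl]$, so Lemma \ref{l:principal deg of nil} applied inside $[\fl,\fl]$ forces all polar exponents of its canonical form to be strictly greater than $r$.

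Summing the two canonical forms yields a canonical form for $\nabla$: all $D_i$'s are semisimple in $\Lg$ and pairwise commute, since $\fz(\fl)$ centralizes $\fl$ and the $[\fl,\fl]$-canonical form lies in a Cartan of $[\fl,\fl] \subset \fl$. The unique smallest exponent that appears is $r$, occurring with coefficient $X_s$, which gives $r_1 = r$ and $D_{r_1} = X_s$. The main obstacle I anticipate is the first step: one must verify that the inductive cascade of gauges is well-defined as a formal series gauge and that no new terms of order $\leq r$ are created in the process. The inequality $r < -1$ is precisely what guarantees that the corrections arising from $g^{-1} \td g$ land at strictly higher order than the coefficient being cleared, so the cascade terminates degree by degree. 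Modulo this standard but careful bookkeeping, the proposition follows from the Jordan decomposition in a reductive Lie algebra combined with Lemma \ref{l:principal deg of nil}.
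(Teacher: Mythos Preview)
Your proof is correct and follows essentially the same strategy as the paper: reduce to the Levi $\fl=\fz_\Lg(X_s)$ via \cite[\S9.3 Proposition 2]{BV}, then invoke Lemma \ref{l:principal deg of nil} on the piece with nilpotent leading term. The only organizational difference is that the paper separates off just the single central summand $X_s s^r\,\td s$ and applies Lemma \ref{l:principal deg of nil} to the remaining $\fl$-valued connection $\td+(X_n s^r+A_{r+1}s^{r+1}+\cdots)\td s$ directly (observing that $Z(X_s)$-gauges fix $X_s$), whereas you further split $\fl=\fz(\fl)\oplus[\fl,\fl]$ and treat the two pieces separately before recombining. Both routes work; the paper's is marginally shorter since it avoids the decomposition and the check that $X_n\in[\fl,\fl]$ (your justification for which is slightly imprecise---the projection of $X_n$ to $\fz(\fl)$ is not \emph{a priori} a Jordan summand---but the conclusion is correct, e.g.\ because the nilpotent cone of a reductive $\fl$ lies in $[\fl,\fl]$).
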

	\begin{proof}
		Let $\fz(A_s)$ be the centralizer of $A_s$, 
		which is a Levi subalgebra. 
		By \cite[\S9.3 Proposition 2]{BV}, 
		we can transform \eqref{eq:connection form} over $\bC(\!(s)\!)$ 
		without changing $r$ and $A_r$ 
		so that $A_{r+k}\in\fz(A_s)$, $\forall k\geq 1$. 
		Thus we may assume $A_{r+k}\in\fz(A_s)$ from the beginning. 
		Write
		\[
		\nabla=(\td+(A_ns^r+A_{r+1}s^{r+1}+\cdots)\td s)+A_ss^r\td s.
		\]
		Let $Z(A_s)$ be the connected centralizer of $A_s$ in $\LG$, 
		which is a Levi subgroup. 
		Regarding $\td+(A_ns^r+A_{r+1}s^{r+1}+\cdots)\td s$ 
		as a formal $Z(A_s)$-connection, 
		it can be transformed 
		using $Z(A_s)(\bC(\!(s^{1/b})\!))$ for some $b$ 
		to a canonical form:
		\[
		\td+(D_{r_2}s^{r_2}+\cdots+D_{r_N}s^{r_N}+Cs^{-1})\td s,
		\quad D_{r_i},\ C\in\fz(A_s).
		\]
		Apply Lemma \ref{l:principal deg of nil} 
		to $\fz(A_s)$ and the above connection, 
		we get $r<r_2$. 
		Since $Z(A_s)(\bC(\!(s^{1/b})\!))$ fixes $A_s$, 
		we obtain that $\nabla$ is equivalent to
		\[
		\td+(A_ss^r+D_{r_2}s^{r_2}+\cdots+D_{r_N}s^{r_N}+Cs^{-1})\td s.
		\]
		This is a canonical form of $\nabla$, 
		which proves the proposition.
	\end{proof}

	\subsection{}
	From \cite[Proposition 15]{Zhu}
	\footnote{An alternative proof can be given by the argument 
		in \cite[Proposition 28.(i)]{KXY}}, 
	we have a commutative diagram
	\begin{equation}\label{eq:comm diag}
	\begin{tikzcd}
		U(V_P)^{L_P} \arrow[r,hook] \arrow[d,hook] &\fZ_{\fp(2)} \arrow[d,hook]\\
		U(V_P) \arrow[r,hook] &\End(\Vac_{\fp(2)})
	\end{tikzcd}
	\end{equation}
	
	\begin{prop}\label{p:compatibility of characters at infty}
		Let $\chi_\infty$ be any oper structure on $\nabla^X|_{D_\infty^\times}$,
		which factors through $\fZ_{\fp(2)}$ by Lemma \ref{l:oper slope 1/m}.
		Let $\phi$ correspond to $X$ under \eqref{eq:GIT isom}.
		The following two characters coincide:
		\[
		U(V_P)^{L_P}\ra\fZ_{\fp(2)}\xrightarrow{\chi_\infty}\bC,\qquad U(V_P)^{L_P}\ra U(V_P)\xrightarrow{\phi}\bC.
		\]
	\end{prop}
	\begin{proof}
		Let $\nabla_\infty$ be the underlying connection of 
		$\chi_\infty$. 
		We can write it in the oper canonical form:
		\begin{equation}\label{eq:oper form}
			\nabla_\infty=\td-(p_{-1}+\sum_{i=1}^n v_i(s)p_i)\td s,\quad v_i(s)=\sum_j v_{ij}s^{-j-1},\ v_{ij}\in\bC.
		\end{equation}
		Since the underlying connection $\nabla_\infty$ has slope $\frac{1}{m}$, 
		we know from \cite[Lemma 16]{Zhu} that $v_{ij}\neq0$ only if
		$j\leq d_i+\lfloor\frac{d_i}{m}\rfloor-1$.
		
		Under the Feigin-Frenkel isomorphism $\fZ\simeq\Fun\Op_{\Lg}(D_\infty^\times)$, 
		the coefficients $v_{ij}$ map to a set of Segal-Sugawara operators $S_{ij}$
		that topologically generate $\fZ$. 
		From the proof of \cite[Proposition 28.(i)]{KXY}, 
		we can see
		\begin{equation}\label{eq:U(V_P)^L_P generators}
			U(V_P)^{L_P}=\bC[S_{i,d_i+\frac{d_i}{m}-1},m|d_i].
		\end{equation}
		Identify $\cfc\simeq p_{-1}+\sum_i\bC p_i$ via Kostant section.
		Under the composition
		$V_P^*/\!\!/L_P\hookrightarrow\fc^*\simeq\cfc\simeq p_{-1}+\sum_i\bC p_i$,
		the generators $\{S_{i,d_i+\frac{d_i}{m}-1},m|d_i\}$
		map to the coefficients of $\{p_i,m|d_i\}$ in the Kostant section.
		
		Now taking substitution $s=u^m$, 
		replacing $j$ with $j+d_i-1$ in the oper form \eqref{eq:oper form}, 
		and applying the gauge transform by $\crho(u^{m+1})$, 
		we obtain equation
		\begin{equation}
			\begin{split}
				&\td-m(u^{-2}p_{-1}+\sum_{i=1}^n\sum_{j\leq\lfloor\frac{d_i}{m}\rfloor}v_{i,j+d_i-1}u^{d_i-mj-2}p_i)\td u-\crho\frac{\td u}{u}\\
				=&\td-m(p_{-1}+\sum_{m|d_i}v_{i,d_i+\lfloor\frac{d_i}{m}\rfloor-1}p_i)\frac{\td u}{u^2}+A\frac{\td u}{u},\quad A\in\Lg[\![u]\!].
			\end{split}
		\end{equation}
		Since $\nabla_\infty$ has maximal slope $\frac{1}{m}$, 
		we know from the slope formula for opers that 
		at least one of 
		$v_{i,d_i+\lfloor\frac{d_i}{m}\rfloor-1}p_i$ 
		in the above is nonzero. 
		Therefore the leading term
		$-m(p_{-1}+\sum_{m|d_i}v_{i,d_i+\lfloor\frac{d_i}{m}\rfloor-1}p_i)$ 
		is non-nilpotent. 
		Applying Proposition \ref{p:ss part of top polar coeff}, 
		we obtain that in the canonical form of $\nabla^X$ in terms of $u$, 
		the order $r=-2$, 
		and the coefficient of the leading term 
		is conjugated to the semisimple part of
		$-m(p_{-1}+\sum_{m|d_i}v_{i,d_i+\lfloor\frac{d_i}{m}\rfloor-1}p_i)$.
		
		On the other hand, 
		we have shown that the canonical form of 
		$\nabla^X|_{\bC(\!(u)\!)}$ 
		is given by \eqref{eq:infty Jordan}, 
		i.e. $\td-mX\frac{\td u}{u^2}$. 
		Thus the semisimple part of
		$p_1+\sum_{m|d_i}v_{i,d_i+\lfloor\frac{d_i}{m}\rfloor-1}p_i$
		is conjugated to $X$.
		Denote $\pi:\Lg\ra\Lg/\!/\LG=\cfc$. 
		Then
		$\pi(p_{-1}+\sum_{m|d_i}v_{i,d_i+\lfloor\frac{d_i}{m}\rfloor-1}p_i)=\pi(X)$. 
		Now the character
		$U(V_P)^{L_P}\ra\fZ_{\fp(2)}\xrightarrow{\chi_\infty}\bC$
		maps generators 
		$S_{i,d_i+\lfloor\frac{d_i}{m}\rfloor-1}$ 
		to $v_{i,d_i+\lfloor\frac{d_i}{m}\rfloor-1}$, 
		thus corresponds to $\pi(X)$ under $V_P^*/\!/L_P\hookrightarrow\fc^*\simeq\cfc$. 
		This is exactly the definition of $\phi$. The proof is complete.
	\end{proof}

	\section{A space of global opers}\label{s:global opers}
	Since $\theta$-connections have no obvious oper structures 
	except in the case of Frenkel-Gross connection where $m=h$,
	we cannot directly apply the method of Beilinson-Drinfeld to
	construct their Hecke eigensheaves.
	Instead, we will first consider a space of global opers,
	for which we construct nonzero Hecke eigensheaves.
	We will later show that the underlying connections of these opers 
	are exactly $\theta$-connections.
	
	\subsection{Central support of parabolic local opers}
	Recall that $P\subset G(\cO_0)$.
	Its Lie algebra is $\fp\subset\fg(\cO)$.
	Thus $\fp+\bC1$ is a subalgebra of $\hg=\fg(\!(t)\!)+\bC1$.
	Denote $\Vac_\fp=\Ind_{\fp+\bC1}^{\hg}\bone$,
	with generator $v_0=1\otimes\bone\in\Vac_\fp$.
	Denote
	$\fZ_\fp=\mathrm{Im}(\fZ\rightarrow\End(\Vac_\fp))$,
	$\Op_{\Lg,P}(D_0)_0	=\Spec\fZ_\fp$.
	Here $\End(\Vac_\fp)\simeq\Vac_\fp^P\subset\Vac_\fp$ by acting on $v_0$,
	so we regard $\fZ_\fp$ as a subspace of $\Vac_\fp$.
	As in the proof of Proposition \ref{p:compatibility of characters at infty},
	the center $\fZ$ is generated by Segal-Sugawara operators $S_{i,j}$.
	The associated graded of $\fZ$ is the space of functions on local Hitchin base
	$\Hit(D^\times)\simeq\bigoplus_{i=1}^n\omega_F^{d_i}$, 
	c.f. Appendix \ref{s:appendeix}.
	The symbol $\overline{S_{i,j}}$ of $S_{i,j}$ 
	is the coefficient of $t^{-j-1}(\td t)^{d_i}$.
	
	\begin{prop}\label{p:parabolic central support}
		The image $\fZ_\fp$ is freely generated by the images of 
		$S_{i,j}$ for $1\leq i\leq n$, $j\leq d_i-\lceil\frac{d_i}{m}\rceil-1$.
		The kernel of $\fZ\rightarrow\fZ_\fp$
		is generated by $S_{i,j}$ for $1\leq i\leq n$, $j\geq d_i$,
		together with $S_{i,j}-f_{i,j}(S_{p,q})$ for $1\leq i\leq n$,
		$d_i-\lceil\frac{d_i}{m}\rceil\leq j\leq d_i-1$,
		where $f_{i,j}$ is a polynomial of $S_{p,q}$ 
		for $q\leq d_i-\lceil\frac{d_i}{m}\rceil-1$
	\end{prop}
	\begin{proof}
		The associated graded of the action map
		$\fZ\rightarrow\fZ_\fp\hookrightarrow\Vac_\fp^P\hookrightarrow\Vac_\fp$
		is the functions on local Hitchin map \eqref{eq:local Hithin map}
		\[
		\Fun\Hit(D^\times)\rightarrow\gr\fZ_\fp\hookrightarrow(\Fun\fp^\perp)^P\hookrightarrow\Fun\fp^\perp
		\]
		where $\gr\fZ_\fp$ is with respect to the filtration on $\Vac_\fp$.
		
		By Proposition \ref{p:p local Hitchin image},
		the image of the local Hitchin map is $\Fun\Hit(D)_\fp\subset\gr\fZ_\fp$
		\eqref{eq:Hit_p}.
		By Proposition \ref{p:inv poly on p(1)},
		$\Fun\Hit(D)_\fp\subset\gr\fZ_\fp\subset(\Fun\fp^\perp)^P$
		is an isomorphism.
		We get $\gr\fZ_\fp\simeq\Fun\Hit(D)_\fp$.
		As a result, $\fZ_\fp$ is freely generated by those $S_{i,j}v_0$
		whose symbols generate $\Fun\Hit(D)_\fp$.
		The images $S_{i,j}v_0$ of other $S_{i,j}$, 
		i.e. those with $j\geq d_i-\lceil\frac{d_i}{m}\rceil$,
		are polynomials of $S_{i,j}v_0$ for $j\leq d_i-\lceil\frac{d_i}{m}\rceil-1$.
		
		It remains to show $S_{i,j}v_0=0$ for $j\geq d_i$.
		In fact, since $P$ contains Iwahori $I$,
		the action of $\fZ$ on $\Vac_\fp$
		factors through $\Vac_{\mathrm{Lie}(I)}=\Ind_{\mathrm{Lie}(I)+\bC1}\bone$.
		The central support of $\Vac_{\mathrm{Lie}(I)}$ corresponds to 
		some opers with	regular singularities,
		see \cite[Corollary 13.3.2]{FGLocal},
		so that the $S_{i,j}$ for $j\geq d_i$ already acts as zero on 
		$\Vac_{\mathrm{Lie}(I)}$.
		The proof is complete.
	\end{proof}

	\subsection{Global oper space}
	In the following we replace the parahoric $P$ 
	with its opposite parahoric	$P^\opp$.
	Proposition \ref{p:parabolic central support} holds for $P^\opp$ 
	with the same proof.
	Consider the following space of opers:
	\begin{equation}\label{eq:Op_cG}
		\Op_{\cG}:=\Op_{\Lg}(\bGm)\times_{\Op_{\Lg}(D_0^\times)}\Op_{\Lg,P^\opp}(D_0)_0\times_{\Op_{\Lg}(D_\infty^\times)}\Op_{\Lg}(D_\infty)_{\leq 1/m}.
	\end{equation}
	
	The restriction to $\infty$ gives a morphism 
	$\Op_{\cG}\rightarrow\Op_{\Lg}(D_\infty)_{\leq 1/m}$. 
	As in \eqref{eq:comm diag}, 
	we have a morphism 
	$\Fun(V_P^*/\!/L_P)\simeq U(V_P)^{L_P}
	\rightarrow\fZ_{\fp(2)}\simeq\Fun\Op_{\Lg}(D_\infty)_{\leq 1/m}$. 
	Composing these two morphisms, we get a morphism
	\begin{equation}\label{eq:Op_cG to V//L}
		p:\Op_{\cG}\rightarrow V_P^*/\!/L_P.
	\end{equation}
	
	\begin{prop}\label{p:Op_cG to V//L}
		The morphism $p$ in \eqref{eq:Op_cG to V//L} is an isomorphism.
		Explicitly, there exists polynomials $f_{i,j}$ 
		for $1\leq i\leq n, 0\leq j\leq \lceil\frac{d_i}{m}\rceil-1$ 
		in variables $v_{i,d_i+\frac{d_i}{m}-1}$, $1\leq i\leq n$, $m|d_i$, 
		such that
		\begin{equation}\label{eq:global opers}
			\begin{split}
				\Op_{\cG}
				=\{&\td+(t^{-1}p_{-1}+\sum_{m|d_i}v_{i,d_i+\frac{d_i}{m}-1}t^{\frac{d_i}{m}-1}p_i+\sum_{i=1}^n\sum_{0\leq j\leq \lceil \frac{d_i}{m}\rceil-1}f_{i,j}(v_{k,d_k+\frac{d_k}{m}-1},m|d_k)t^{j-1}p_i)\td t\\
				&\mid v_{i,d_i+\frac{d_i}{m}-1}\in\bC,m|d_i\}.
			\end{split}
		\end{equation}
	\end{prop}
	\begin{proof}
		Recall from the discussion in \S\ref{s:local opers with slope 1/m} that $\fZ_{\fp(2)}$ is a free algebra generated by a collection of Segal-Sugawara operators $S_{i,j}$, 
		and $U(V_P)^{L_P}$ is generated by a subset of those generators $S_{i,d_i+\frac{d_i}{m}-1}$ 
		where $d_i$ is divisible by $m$ as in \eqref{eq:U(V_P)^L_P generators}. 
		Thus the map 
		$\Op_{\cG}\rightarrow\Spec\fZ_{\fp(2)}\twoheadrightarrow\Spec U(V_P)^{L_P}$ is first writing a global oper $\chi$ 
		into its oper canonical form at $\infty$, 
		i.e. $\chi_\infty=d+(p_{-1}+\sum_i v_i(s)p_i)ds$, 
		$v_i(s)=\sum_j v_{ij}s^{-j-1}$, $s=t^{-1}$,
		then taking the coefficients 
		$v_{i,d_i+\frac{d_i}{m}-1}$ for $m\mid d_i$. 
		Therefore to show $p$ is isomorphic, 
		it suffices to show \eqref{eq:global opers}.
		
		We begin with
		\[
		\Op_{\Lg}(\bGm)=\{\td+(p_{-1}+\sum_{i=1}^n\bC[t,t^{-1}]p_i)\td t\}.
		\]
		For $\nabla=\td+(p_{-1}+\sum_{i=1}^n\sum_j v_{i,j}t^{-j-1}p_i)\td t\in\Op_{\Lg}(\bGm)$ 
		to sit inside $\Op_{\Lg,P^\opp}(D_0)_0$,
		by Proposition \ref{p:parabolic central support} we have
		$j\leq d_i-1$, and for $d_i-\lceil\frac{d_i}{m}\rceil\leq j\leq d_i-1$,
		$v_{i,j}=f_{i,j}(v_{p,q},q\leq d_i-\lceil\frac{d_i}{m}\rceil-1)$.
		Next we change coordinate to $s=t^{-1}$:
		\[
		\nabla=\td-(p_{-1}s^{-2}+\sum_{i=1}^n(\sum_{j\leq d_i-\lceil\frac{d_i}{m}\rceil-1}v_{i,j}s^{j-1}+\sum_{d_i-\lceil\frac{d_i}{m}\rceil\leq j\leq d_i-1}f_{i,j}(v_{p,q})s^{j-1})p_i)\td s.
		\]
		After gauge transforming the above into oper canonical forms on $D_\infty^\times$
		and applying the oper slope formula as in the proof of Proposition \ref{p:compatibility of characters at infty},
		we see $\nabla\in\Op_{\Lg}(D_\infty)_{\leq 1/m}$ if and only if
		$v_{i,j}=0$ for $j\leq d_i-\lceil\frac{d_i}{m}\rceil-2$.
		Finally we switch back to coordinate $t$ and apply transform by
		$\crho(t)$ and $\exp(-p_1/2)$, 
		we obtain \eqref{eq:global opers} where the $j$ in $f_{i,j}$ are re-indexed. 
	\end{proof}

	\section{Proof of Theorem \ref{t:main}}\label{s:proof of main thm}
	\subsection{Set up the proof}
	Let $X\in\Lg_1^\st$ be any stable vector, 
	with associated $\theta$-connection $\nabla^X$. 
	Let $\phi\in V_P^{*,\st}$ be a stable functional
	that matches with $X$ under \eqref{eq:st bij}.
	Let $\cA_\phi$ be the irreducible holonomic $D$-module
	on $\Bun_{\cG}$ that is $(V_P,\cL_\phi)$-equivariant,
	where $\cG$ is the group scheme with level structure
	$P^\opp$ at $0$ and $P(2)$ at $\infty$.
	Then $\cA_\phi$ is Hecke eigen with an eigenvalue $\cE_\phi$.
	Recall that $\cE_\phi$ is regular singular at $0$
	with monodromy class $\uu_P$.
    
    By Proposition \ref{p:Op_cG to V//L}, 
    there exists a unique $\nabla_\phi\in\Op_{\cG}$ that corresponds to $\phi$.
	We first construct a nonzero complex $\cA$ 
	of holonomic $D$-modules on $\Bun_{\cG}$
	that is $(V_P,\cL_\phi)$-equivariant 
	and is a Hecke-eigen with eigenvalue $\nabla_\phi$.
	Once such a complex $\cA$ has been constructed, 
	by rigidity of the epipelagic automorphic datum, 
	$\cA$ is a complex of shifts of copies of $\cA_\phi$.
	Thus its eigenvalue must be the underlying connection of $\nabla_\phi$.
	Then it will be easy to deduce that $\nabla_\phi$ is isomorphic to
	the $\theta$-connection $\nabla^X$ 
	using physical rigidity \cite{HJrigid}.
	
	\subsection{The Hecke eigen complex}\label{ss:Hecke eigensheaf I^opp}	
	\subsubsection{The construction}
	Let $\chi$ be the character of $\Fun\Op_{\cG}$ corresponding to $\nabla_\phi$.
    Consider
	\begin{equation}\label{eq:cA global}
		\cA\simeq\omega_{\Bun_{\cG}}^{-1/2}
		\otimes_{\cO_{\Bun_{\cG}}}(\cD_{\Bun_{\cG}}'
		\otimes^{\mathrm{L}}_{\Fun\Op_\cG\otimes_{U(V_P)^{L_P}}U(V_P)}\bC_{\chi,\phi}),
	\end{equation}
    where $U(V_P)$ acts on $\cD_{\Bun_{\cG}}'$ via \eqref{eq:comm diag}
    and the localization functor.
	By the proof of \cite[Theorem 8]{Zhu}, 
	$\cA$ is Hecke-eigen over $\bGm$ with eigenvalue $\nabla_\phi$.
	Observe that by Proposition \ref{p:Op_cG to V//L},
	$\cA\simeq\omega_{\Bun_{\cG}}^{-1/2}
	\otimes_{\cO_{\Bun_{\cG}}}(\cD_{\Bun_{\cG}}'
	\otimes^{\mathrm{L}}_{U(V_P)}\bC_\phi)$.
	
	\subsubsection{Nonvanishing}
	\begin{lem}
		The complex $\cA$ is nonzero.
	\end{lem}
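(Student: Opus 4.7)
The plan is to reduce the nonvanishing of $\cA'$ to a support statement in $\Op_*$. By \eqref{eq:cA global}, up to the canonical-bundle twist $\cA'$ is the derived tensor product of the sheaf of $R$-modules
$$\cM := \cD_{\Bun_{\cG'}}' \otimes_{\cO} \cL_{w_0\lambda_0} \otimes_{i,\cO} \cV_{\lambda_i}$$
with $\bC_{\chi,\phi}$ over $R := \Fun \Op_* \otimes_{U(V_P)^{L_P}} U(V_P)$. Thus it suffices to exhibit at least one nonzero cohomology sheaf of this complex, which is equivalent to showing that the point $(\chi,\phi)$ lies in the support of $\cM$ as an $R$-module.

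First I would verify nonvanishing of the individual $\hg$-modules appearing in the localization \eqref{eq:cA Loc}. The opposite Verma $\bM^{\opp}_{w_0\lambda_0}/\ker\chi_0$ is nonzero because it admits a nonzero irreducible quotient on which $\fZ$ acts via $\chi_0$; the Weyl modules $\bV_{\lambda_i}/\ker\chi_i$ are nonzero by the standard theory of opers with regular residue and trivial monodromy. The crucial input is the nonvanishing of $\Vac_{\fp(2)}/\ker\phi$, which follows from Proposition~\ref{p:compatibility of characters at infty}: the character $\phi$ on $U(V_P)^{L_P}$ is the restriction of $\chi_\infty$ on $\fZ_{\fp(2)}$, so the two can be realized simultaneously on a common nonzero quotient.

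Next I would test $\cA'$ on the open embedding $j : V_P \hookrightarrow \Bun_{\cG'}$ arising from the action of $P(1)/P(2)$ on the trivial bundle at $\infty$. On this open locus, the restriction of $\cA'$ should reduce, up to the line-bundle and canonical twists, to the exponential $D$-module $\cL_\phi$ used in Yun's rigid construction \S\ref{ss:epipelagic}, which is manifestly nonzero. Tracking the localization functor through this restriction step would then produce a nonzero class in some cohomology degree of $\cA'$.

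The main obstacle is the absence of a flatness property analogous to \cite[Proposition 5]{YiFG}: the relevant Hitchin-type map need not be flat in our generality, so a priori the higher Tor groups in $\cM \otimes_R^{\mathrm{L}} \bC_{\chi,\phi}$ could cancel against $H^0$. To rule this out, I would combine the local computation above with the techniques of \cite{BL23,FJMotivic} on tamely ramified Hecke categories, using them to embed our setup in a family of characters where nonvanishing of some cohomology is generic, and then to propagate this to the specific character $(\chi,\phi)$ by a semicontinuity/support argument. The success of this step is what allows the proof to bypass the flatness used in \cite{YiFG, Zhu}.
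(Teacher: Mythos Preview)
Your proposal has several genuine gaps and a misunderstanding of where the techniques from \cite{BL23,FJMotivic} enter.

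First, the open embedding you invoke does not exist as stated: $V_P = P(1)/P(2)$ embeds openly in $\Bun_{\cG}$ (level $P^\opp$ at $0$), but $\cA'$ lives on $\Bun_{\cG'}$ (level $I^\opp$ at $0$). The paper handles this by taking the preimage $\pi^{-1}(V_P)$ under $\pi:\Bun_{\cG'}\to\Bun_{\cG}$ and then passing to the smaller open affine $I(1)/P(2)\hookrightarrow\Bun_{\cG'}$. Your claim that the restriction of $\cA'$ ``should reduce, up to twists, to $\cL_\phi$'' is unjustified and, on $\Bun_{\cG'}$, not what one gets; even if it were, you have not explained why the derived tensor product in \eqref{eq:cA global} does not kill it.

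Second, and more importantly, the references \cite{BL23,FJMotivic} are not used in the paper to prove nonvanishing of $\cA'$; they are used in the separate descent step \S\ref{ss:descent} to produce $\cA$ on $\Bun_\cG$ from $\cA'$. Your plan to ``embed in a family where nonvanishing is generic and propagate by semicontinuity'' is not a substitute for an actual argument, and nothing in those references supplies one here.

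The idea you are missing is the passage to the associated graded. The paper shows $H^0(\cA')\neq 0$ by proving $\gr H^0(\cA')$ is a nonzero $\cO_{T^*\Bun_{\cG'}}$-module. Restricting to the open $T^*(I(1)/P(2))$, the line and vector bundles trivialize, and $\gr\Fun\Op_*$ becomes $\bigotimes_i N_{\lambda_i}\otimes\Fun\Hit_{\cG'}$; the factors $N_{\lambda_i}$ act freely cyclically on $V_{\lambda_i}$ and cancel. What remains is $\cO_{T^*(I(1)/P(2))}$ tensored over $\Fun(\Hit_{\cG'}\times_{V_P^*/\!\!/L_P}V_P^*)$ with the character at the $\bGm$-fixed point $0$. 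The zero section of $T^*(I(1)/P(2))$ maps to $0$ under both the moment map and the Hitchin map, hence lies in the support, and this gives nonvanishing directly---no flatness, no deformation, no semicontinuity needed.
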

	\begin{proof}	
		Recall the open embedding of relevant orbit
		$V_P=P(1)/P(2)\hookrightarrow\Bun_{\cG}$.
		It suffices to show 
		\[
		H^0(\cA|_{V_P})=\omega_{V_P}^{-1/2}
		\otimes_{\cO_{V_P}}(\cD_{V_P}'
		\otimes_{U(V_P)}\bC_\phi)
		\]
		is nonzero.
		We prove this by showing that 
	    $\cD_{V_P}'$ is flat over $U(V_P)$.
	    In fact, the associated graded map is between function rings of
	    the moment map
		$T^*(V_P)\simeq V_P\times V_P^*$ to $V_P^*$, 
		which is smooth.
		Thus $\cD_{V_P}'$ is flat over $U(V_P)$, and $\cA\neq 0$.
	\end{proof}
	
	\subsubsection{Holonomicity}
	Denote by $i$ the closed embedding into $\Bun_{\cG}$
	of the complement of open substack $V_P$. 
	We have a distinguished triangle
	\[
	i_*i^!\cA\ra\cA\ra j_*j^*\cA\xrightarrow{[1]}.
	\]
	
	Since $\cA$ is $(V_P,\cL_\phi)$-equivariant, $i^!\cA=0$. 
	Thus $\cA\simeq j_*j^*\cA$.
	Since $j^*\cA$ is $(V_P,\cL_\phi)$-equivariant, it must be holonomic.
	So $\cA\simeq j_*j^*\cA$ is holonomic and $(V_P,\cL_\phi)$-equivariant, 
	thus must be a complex of copies of the irreducible $D$-module $\cA_\phi$.

	\subsection{Identification of connections}
	By \cite[Theorem 4.5]{YunEpipelagic},
	the eigenvalue $\nabla_\phi$ of $\cA_\phi$
	must be regular singular at $0$ with monodromy class $\uu_P$.
	Thus by Corollary \ref{c:theta conn 0 monodromy}.(i),
	$\nabla_\phi$ has the same local monodromy as $\nabla^X$ at $0$.
	 
	By Proposition \ref{p:theta conn at infty} and
	Proposition \ref{p:compatibility of characters at infty},
	$\nabla_\phi$ and $\nabla^X$ are both isoclinic at $\infty$ with slope $\frac{1}{m}$ and leading term $X$.
	Therefore they have the same connection canonical forms at $\infty$.
	By \cite[Lemma 18]{YiToral}, 
	$\nabla_\phi$ and $\nabla^X$ are isomorphic over $D_\infty^\times$.
	
	Thus $\nabla_\phi$ and $\nabla^X$ have the same local monodromies.
	By \cite[Theorem 1.2.1]{HJrigid}, 
	that such local monodromies	are physically rigid.
	We conclude that $\nabla^X\simeq\nabla_\phi$,
	completing the proof of Theorem \ref{t:main}.

	\subsection{Remark on the failure of global flatness}\label{ss:moment not flat}
	Unlike \cite[Lemma 17, Lemma 18]{Zhu},  
	the whole moment map $\mu:T^*\Bun_\cG\ra V_P^*$ can fail to be flat
	when $P$ is strictly larger than $I$. 
	In fact, we shall see that $\mu$ is flat if and only if $P=I$.
	
	Let $\cG(0,1)=\cG(P^\opp,P(1))$. 
	Then $\Bun_\cG\ra\Bun_{\cG(0,1)}$ is a $V_P$-bundle, 
	and $\dim\Bun_{\cG(0,1)}=0$. 
	We have Hamiltonian reduction
	$$
	T^*\Bun_{\cG(0,1)}\simeq\mu^{-1}(0)/V_P.
	$$
	Suppose $\mu:T^*\Bun_\cG\ra V_P^*$ is flat. 
	Since we have open embedding $V_P\hookrightarrow\Bun_\cG$, 
	there is open embedding $T^*V_P\ra T^*\Bun_\cG$. 
	We can see that generically, fibers of $\mu$ contain $V_P$ as an open subscheme, 
	thus have dimension $\dim V_P$. 
	If $\mu$ is flat, its fiber dimension is constant, so that 
	$\dim\mu^{-1}(0)=\dim V_P$ and 
	$\dim T^*\Bun_\cG=0=2\dim\Bun_{\cG(0,1)}$. 
	So the flatness of $\mu$ is equivalent to 
	the goodness of $\Bun_{\cG(0,1)}$. 
	However, the latter is not true if $P$ is larger than $I$. 
	To see this, consider Birkhoff decomposition:
	\[
	\Bun_{\cG(0,1)}
	=\bigsqcup_{\tw\in W_P\backslash\tW} P^-\backslash P^-\tw L_P P(1)/P(1).
	\]
	
	The goodness condition is equivalent to the condition that
	\[
	\mathrm{codim}\{y\in\Bun_{\cG(0,1)}\mid\dim\Aut(y)=n\}\geq n,\quad\forall n>0.
	\]
	
	In the stratum $P^-\backslash P^-\tw L_P P(1)/P(1)$, 
	all the objects have the same automorphism group $P^-\cap\tw P(1)\tw^{-1}$, whose dimension is close to $\ell(\tw)$ 
	up to a constant determined by $G$ and $P$. 
	Thus there are only finitely many strata
	whose automorphism groups have dimension
	equal to the same given number $n$.
	Denote
	\[
	Y_n=\{\tw\in W_P\backslash\tW\mid\dim(P^-\cap\tw P(1)\tw^{-1})=n\}.
	\] 
	Any $P^-\backslash P^-\tw L_P P(1)/P(1)$ can be regarded as 
	a $P^-\cap\tw P(1)\tw^{-1}$-gerbe over $\tw^{-1}P^-\tw\cap L_P\backslash L_P$. 
	Thus the inequality of goodness becomes
	\begin{align*}
		&0-(\max_{\tw\in Y_n}\dim(\tw^{-1}P^-\tw\cap L_P\backslash L_P)-n)\geq n\\
		\Leftrightarrow&\dim(\tw^{-1}P^-\tw\cap L_P)=\dim L_P,\quad\forall \tw\in Y_n\\
		\Leftrightarrow&\tw L_P\tw^{-1}\subset P^-,\quad \forall \tw\in Y_n.
	\end{align*}
	
	Thus for the goodness to hold, 
	the above should hold for all but finitely many $\tw$. 
	However, when $P\neq I$, we can construct infinitely many $\tw$
	that fail the above relation.
	In this case there exists nonzero affine root $\tilde{\alpha}\in\Phi(L_P)$. 
	Write $\tilde{\alpha}=\alpha+n_\alpha\cdot\delta$. 
	Let $x$ be the point in the apartment defining $P$. 
	For $\tw=w\cdot v\in W\ltimes X_*(T)$, 
	if the above inclusion holds, then
	\[
	\tw\tilde{\alpha}(x)=(w\alpha)(x)+n_\alpha+\alpha(v)\leq 0.
	\]
	Fix $w$. 
	Since $\alpha\neq0$, there exists $v_0$ such that $\alpha(v_0)>0$. 
	For any $v=\ell v_0$, $\ell>\!\!>0$, the above inequality fails. 
	Thus $\Bun_{\cG(0,1)}$ is not good, and $\mu$ is not flat.

	\section{Epipelagic Langlands parameters}\label{s:applications on epipelagic L param}
	In positive characteristic case,
	the automorphic representation generated by the
	Frobenius trace function of the 
	unique irreducible $(V_P,\cL_\phi)$-equivariant perverse sheaf on $\Bun_{\cG}$
	has epipelagic representation of \cite{RY} as its local component at $\infty$.
	Thus the monodromy at $\infty$ of the Hecke eigenvalue $\cE_\phi$ 
	of the $(V_P,\cL_\phi)$-equivariant $D$-module $\cA_\phi$
	is the de Rham analog of the epipelagic Langlands parameter.
	
	\subsection{}
	In \cite[\S7.1]{RY}, 
	Reeder and Yu made predictions on the epipelagic Langlands parameters.
	We can deduce the following 
	de Rham analog of their predictions from our main result:
	\begin{prop}\label{p:epipelagic L parameter}
		For $\phi\in V_P^{*,\st}$,
		let $\nabla_\infty=\cE_\phi|_\infty$ be the 
		de Rham epipelagic Langlands parameter.
		\begin{itemize}
			\item [(i)] 
			The monodromy group of $\nabla_\infty$ has zero fixed vectors on $\Lg$.
			
			\item [(ii)] 
			The slope of $\nabla_\infty$ is $\frac{1}{m}$,
			and its adjoint irregularity is
			$\mathrm{Irr}(\nabla_\infty^{\Ad})=\frac{|\Phi|}{m}$.
			
			\item [(iii)] 
			The image of wild inertia maps into 
			a maximal torus $\cT$ 
			such that a generator of the tame inertia group maps to the order $m$
			$\bZ$-regular elliptic class in $W=N_{\LG}(\cT)/\cT$.
		\end{itemize}
	\end{prop}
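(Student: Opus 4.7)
The plan is to deduce Proposition~\ref{p:epipelagic L parameter} directly from Theorem~\ref{t:main}(i) combined with the local analysis of $\nabla^X$ at infinity carried out in Proposition~\ref{p:theta conn at infty}. The main theorem furnishes a global isomorphism $\cE_\phi \simeq \nabla^X$ of $\LG$-connections on $\bGm$; restricting to $D_\infty^\times$ yields $\nabla_\infty \simeq \nabla^X_\infty$, so every local invariant at infinity of $\nabla_\infty$ equals the corresponding invariant of $\nabla^X_\infty$ already computed. My task therefore reduces to repackaging the relevant parts of Proposition~\ref{p:theta conn at infty} into the three assertions (i)--(iii).

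For parts (ii) and (iii) the translation is immediate: the slope $\tfrac{1}{m}$ is Proposition~\ref{p:theta conn at infty}(i), the adjoint irregularity $|\Phi|/m$ is part (ii), and the assertion that the tame inertia generator lies in $N_{\LG}(\cT)$ and maps to the unique $\bZ$-regular elliptic class of order $m$ in $W = N_{\LG}(\cT)/\cT$ is part (iv) with $\cT = \cT_X$.

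For part (i), I would exploit the explicit description of the Galois image $\Gamma \subset \LG$ from Proposition~\ref{p:theta conn at infty}(iv): it is generated by the torus $S$ (containing $X$ in its Lie algebra) together with a lift $\rho(\gamma) \in N_{\LG}(\cT_X)$ whose class $w \in W$ is $\bZ$-regular elliptic of order $m$. Any $\Gamma$-invariant vector in $\Lg$ must lie in $\Lg^S = \fz_{\Lg}(S)$, a Levi subalgebra with Cartan $\ct_X$. Because $\Ad_{\rho(\gamma)}X = \zeta_m^{-1}X$ still lies in $\Lie(S)$, the element $\rho(\gamma)$ normalizes $S$ and therefore acts on $\Lg^S$, reducing the problem to showing $(\Lg^S)^{\rho(\gamma)} = 0$. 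The Cartan contribution $\ct_X^{\rho(\gamma)} = 0$ follows from the ellipticity of $w$. The remaining contribution from the root spaces of $(\Lg^S,\ct_X)$ can be handled by decomposing according to $w$-orbits of roots and applying the same type of analysis used in the proof of \cite[Lemma 7]{YiFG} that underlies the irreducibility statement Proposition~\ref{p:theta conn at infty}(v).

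The main obstacle I anticipate is this last root-space step in part (i). When $S = \cT_X$ the argument is trivial because $\Lg^S = \ct_X$, but in general $S$ can be a proper subtorus and $\Lg^S$ strictly larger than the Cartan; verifying that no $w$-orbit of roots of $\Lg^S$ contributes a $\rho(\gamma)$-fixed vector requires the full strength of the regular elliptic property of $w$, essentially by the same mechanism as in the proof of irreducibility cited above.
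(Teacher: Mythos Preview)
Your strategy is exactly the paper's: invoke Theorem~\ref{t:main}(i) to get $\nabla_\infty\simeq\nabla^X_\infty$, then read off (ii) and (iii) from Proposition~\ref{p:theta conn at infty}(i),(ii),(iv), and for (i) use the description of the differential Galois image in part (iv) together with ellipticity.

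The only discrepancy is that you have manufactured an obstacle in part (i) that does not exist. You correctly note that any $\Gamma$-fixed vector lies in $\Lg^S=\fz_{\Lg}(\Lie S)$, but you then worry about root spaces of this Levi when $S\subsetneq\cT_X$. There are none: since $X\in\Lie S$ and $X$ is \emph{regular} semisimple (Lemma~\ref{l:st=rs}), we have $\Lg^S\subset\fz_{\Lg}(X)=\ct_X$, and the reverse inclusion is clear because $\Lie S\subset\ct_X$. Hence $\Lg^S=\ct_X$ regardless of whether $S$ equals $\cT_X$. The paper's proof of (i) is therefore one line: a wild-inertia-fixed vector commutes with $X$, so lies in $\ct_X$, and then $\ct_X^{\rho(\gamma)}=0$ by ellipticity. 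No appeal to the root-space analysis of \cite[Lemma~7]{YiFG} is needed.
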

	\begin{proof}
		By Theorem \ref{t:main},
		we have $\cE_\phi\simeq\nabla^X$ for some $X\in\Lg_1^\st$.
		
		For (ii), the slope and adjoint irreducibility of $\nabla^X|_\infty$
		is known from (i), (ii) of Proposition \ref{p:theta conn at infty}.
		
		For (iii), the wild inertia image 
		and generator of tame inertia of $\nabla^X|_\infty$ 
		are given in Proposition \ref{p:theta conn at infty}.(iv).
		
		For (i), any vector in $\Lg$ fixed by the wild inertia 
		commutes with $X$, thus is in the Cartan subalgebra $\Lie(\cT_X)$.
		The elements in $\Lie(\cT_X)$ that are further fixed by the tame generator
		must be zero, since the action of tame generator 
		is elliptic.
	\end{proof}
    
    \begin{rem}
    	In \cite[Proposition 5.2]{YunEpipelagic},
    	Yun proved the cohomological rigidity of 
    	the eigenvalue of epipelagic automorphic datum,
    	assuming (i), (ii) in the Proposition \ref{p:epipelagic L parameter}.
    	Thus we can now remove the assumptions in Yun's proof,
    	obtaining an independent proof of cohomological rigidity 
    	from the one in \cite{Chen}.
    \end{rem}
	
	\subsection{}
	In \cite{FuGuEuler},
	Fu and Gu computed the Euler characteristic of $\cE_\phi^{\mathrm{St}}$,
	where $G$ is a classical group,
	and $\mathrm{St}$ is the standard representation of $\LG$.
	From the isomorphism $\nabla^X\simeq\cE_\phi$,
	we deduce the following de Rham analog of the part of 
	\cite[Theorem 1.2, Theorem 1.3]{FuGuEuler}
	under our assumptions on the stable grading:
	\begin{prop}
		Let $G$ be of type $B_n$, $C_n$, or $D_n$.
		Then we have
		\[
		-\chi_c(\cE_\phi^\mathrm{St})=
		\begin{cases}
			\frac{2n}{m},\qquad G\ \text{is type}\ B_n,\ C_n,\\
			\frac{2n}{m},\qquad G\ \text{is type}\ D_n,\ X\ \text{has only nonzero eigenvalues},\\
			\frac{2n-2}{m}, \quad G\ \text{is type}\ D_n,\ X\ \text{has exactly}\ 2n-2\ \text{nonzero eigenvalues}.
		\end{cases}
		\]
	\end{prop}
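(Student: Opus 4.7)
\subsection*{Proof proposal}

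The plan is to reduce to a direct local computation at $\infty$ using Theorem~\ref{t:main}.(i) and the Euler--Poincar\'{e} formula for de Rham cohomology on curves. By Theorem~\ref{t:main}.(i), $\cE_\phi \simeq \nabla^X$ for a stable vector $X\in\Lg_1^\st$ matching $\phi$ under \eqref{eq:st bij}, so it suffices to compute $-\chi_c(\bGm,\nabla^{X,\mathrm{St}})$, where $\nabla^{X,\mathrm{St}}$ denotes $\nabla^X$ composed with the standard representation of $\LG$.

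For a meromorphic connection $M$ on $\bGm$ extending with singularities at $0$ and $\infty$, the de Rham analog of Grothendieck--Ogg--Shafarevich gives
\begin{equation*}
	-\chi_c(\bGm, M) \;=\; \mathrm{Irr}_0(M) + \mathrm{Irr}_\infty(M),
\end{equation*}
since $\chi_c(\bGm)=0$. Because $\nabla^X$ has regular singularity at $0$ (indeed, its residue is $X_1$), we have $\mathrm{Irr}_0(\nabla^{X,\mathrm{St}})=0$, and the whole answer reduces to $\mathrm{Irr}_\infty(\nabla^{X,\mathrm{St}})$.

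To evaluate the irregularity at $\infty$, I would invoke Proposition~\ref{p:theta conn at infty}.(iii): after the ramified base change $s=u^m$, the connection $\nabla^X_\infty$ has canonical form $\td - mX\,\td u/u^2$. Decomposing $\mathrm{St}$ into eigenlines for the regular semisimple element $X$ (diagonalized via $\cT_X$), each nonzero eigenvalue $\mu$ of $X$ on $\mathrm{St}$ contributes a rank-one factor $\td-m\mu\,u^{-2}\td u$ of slope $1$ (hence irregularity $1$) on the $u$-disk, while a zero eigenvalue contributes a regular piece. If $d$ denotes the number of nonzero eigenvalues of $X|_{\mathrm{St}}$, then the pullback under $s=u^m$ has total irregularity $d$, and by the formula $\mathrm{Irr}_u(f^*M)=m\cdot\mathrm{Irr}_s(M)$ for a degree-$m$ ramified cover, $\mathrm{Irr}_\infty(\nabla^{X,\mathrm{St}})=d/m$.

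It remains to count $d$ in each type, using only that $X$ is regular semisimple in the corresponding classical Lie algebra (Lemma~\ref{l:st=rs}). For $G=B_n$, we have $\Lg=\fsp_{2n}$ whose roots include $\pm 2\chi_i$, so regularity forces all $\chi_i$ nonzero and hence $d=2n$. For $G=C_n$, $\Lg=\so_{2n+1}$ has a forced zero eigenvalue on $\mathrm{St}=\bC^{2n+1}$, the remaining $2n$ coming in nonzero pairs $\pm\chi_i$ by regularity, so $d=2n$. For $G=D_n$, $\Lg=\so_{2n}$ has eigenvalues in pairs $\pm\chi_i$ where regularity only requires $\chi_i\neq\pm\chi_j$ for $i\neq j$, so at most one pair can vanish, giving the dichotomy $d=2n$ or $d=2n-2$. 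This matches the three cases of the proposition. The main (minor) obstacle is simply checking that the de Rham Euler--Poincar\'{e} formula applies to $\cE_\phi^{\mathrm{St}}$ with the normalization consistent with \cite{FuGuEuler}; no new technical input beyond Theorem~\ref{t:main} and Proposition~\ref{p:theta conn at infty} is required.
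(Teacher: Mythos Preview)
Your proposal is correct and follows essentially the same approach as the paper: reduce $-\chi_c$ to $\mathrm{Irr}_\infty$ (the paper cites \cite[Lemma 3.1]{FuGuEuler} for this, while you spell out the de Rham Grothendieck--Ogg--Shafarevich formula and the vanishing of $\mathrm{Irr}_0$ directly), then use the canonical form from Proposition~\ref{p:theta conn at infty}.(iii) to identify $\mathrm{Irr}_\infty(\nabla^{X,\mathrm{St}})$ as the number of nonzero eigenvalues of $X$ on $\mathrm{St}$ divided by $m$, and finally count those eigenvalues type by type.
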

	\begin{proof}
		From \cite[Lemma 3.1]{FuGuEuler}, 
		$-\chi_c(\cE_\phi^\mathrm{St})=\mathrm{Irr}_\infty(\cE_\phi^\mathrm{St})$.
		By Proposition \ref{p:theta conn at infty}.(iii),
		$\mathrm{Irr}_\infty(\cE_\phi^\mathrm{St})$ is 
		the number of nonzero eigenvalues of regular semisimple element $X$ 
		on $\mathrm{St}$ divided by $m$.
		For $G$ of type $B_n$ or $C_n$, regular semisimple elements 
		have $2n$ nonzero eigenvalues on $\mathrm{St}$.
		For $G$ of type $D_n$, regular semisimple elements have $2n-2$ or $2n$
		nonzero eigenvalues on $\mathrm{St}$.
		The statement follows.
	\end{proof}

	\section{The Conjecture for $s_0=0$}\label{s:conj for s_0=0}
	For $\cE_\phi$  to be matched with $\nabla^X$,
	their monodromies at $0$ must match,
	i.e. $X_1\in u_{\hat{P}}$.
	However, we have seen in \S\ref{sss:tame monodromy counterexamples}
	that this can fail when $s_0=0$.
	In the following we propose a modification of Theorem \ref{t:main}
	and exhibit an evidence.
	\subsection{A conjectural description of degree one component of stable vectors}\label{ss:gen conj}
	Recall the projection $p:\Lg_1\ra\Lg(1)$. 
	Recall that the image $\pi(\Lg_1^\st)$ always contains 
	the open $\LG(0)$-orbit $\cO_\theta$. 
	When $s_0>0$, the image consists of this single orbit. 
	However, when $s_0=0$, the image can contain more than one $\LG(0)$-orbit. 
	Note $\Lg_0=\Lg(0)\oplus\Lg(m)\oplus\Lg(-m)$.
	The following conjectural description of the smallest orbit in the image generalizes Corollary \ref{c:theta conn 0 monodromy} to the case of $s_0=0$.
	
	\begin{conj}\label{c:0 monodromy s_0=0}
		Let $\theta$ be a stable grading on $\Lg$, with associated parahoric subgroup $P$.
		\begin{itemize}
			\item [(i)] There is a $\LG(0)$-orbit $\cO_P\subset p(\Lg_1^\st)$ satisfying $\exp(\cO_P)\subset\uu_P$.
			\item [(ii)] For any $X\in\Lg_1^\st$, there exists $g\in\exp(\Lg(m))\subset\LG_0$ such that $p(\Ad_g X)\in\cO_P$.
		\end{itemize}
	\end{conj}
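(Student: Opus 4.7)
The plan is to treat the two parts of the conjecture separately: identify a candidate orbit $\cO_P$ in part (i) using the monodromy constraint imposed by Yun's theorem, and then establish in part (ii) that the action of $\exp(\Lg(m))$ on $\Lg_1$ is rich enough to sweep every stable orbit into $\cO_P$. For part (i), I would characterize $\cO_P$ as the unique $\LG(0)$-orbit in $p(\Lg_1^\st) \cap \exp^{-1}(\uu_P)$, so that it plays the role filled by $\cO_\theta$ when $s_0 > 0$ in Proposition \ref{p:0 monodromy}. Existence would be verified case-by-case using the tables in \cite[\S7]{RLYG}: in classical types one writes out $\Lg_1$ in block form as in \S\ref{ss:0 mon Lg=sp}--\S\ref{ss:0 mon Lg=so_2n+1 m=n even} and matches Jordan types against the list of $\uu_P$ in \cite[\S4.9]{YunEpipelagic}, while in exceptional types one invokes the Kazhdan-Lusztig and Lusztig machinery as in the remark after Proposition \ref{p:0 monodromy}, refined to allow non-open orbits.

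For part (ii), the key observation is that for $Y \in \Lg(m)$ and $g = \exp(Y)$, the higher iterated brackets $\ad(Y)^j X$ all land outside degree $1$, so the exact formula $p(\Ad_g X) = X_1 + [Y, X_{1-m}]$ holds. Thus the degree-one components realized by $\exp(\Lg(m))$-translation form the affine space $X_1 + [\Lg(m), X_{1-m}] \subset \Lg(1)$, and (ii) becomes the claim that this affine space meets $\cO_P$. Because $X = X_1 + X_{1-m}$ is regular semisimple by Lemma \ref{l:st=rs}, $X_{1-m}$ is non-zero whenever $X_1$ lies outside the open orbit $\cO_\theta$, and I would analyze $\ad(X_{1-m}) : \Lg(m) \to \Lg(1)$ using principal $\Sl_2$-triples adapted to $\theta$, in the style of the proof of Proposition \ref{p:p local Hitchin image}, to show that this affine slice is large enough to reach $\cO_P$.

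The main obstacle will be the uniform verification of part (ii): since $\cO_P$ is typically a smaller orbit than the generic one, crude dimension counts on $[\Lg(m), X_{1-m}]$ need not suffice, and one has to control how this affine space sits within the orbit stratification of $\Lg(1)$. A cleaner but more ambitious route, which I would pursue in parallel, is to adapt the Hecke-eigensheaf construction of \S\ref{ss:nonvanishing}--\S\ref{ss:descent} to the $s_0 = 0$ setting; if one can produce a non-zero $(V_P, \cL_\phi)$-equivariant eigensheaf with eigenvalue $\nabla^{\Ad_g X}$ for some $g \in \exp(\Lg(m))$, then the matching of local monodromies at $0$ between $\nabla^{\Ad_g X}$ and $\cE_\phi$ forces $p(\Ad_g X) \in \exp^{-1}(\uu_P) \cap p(\Lg_1^\st)$, yielding (ii) directly and (i) as a by-product, just as Proposition \ref{p:0 monodromy} is extracted from Theorem \ref{t:main} in the $s_0 > 0$ case.
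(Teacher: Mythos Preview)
The statement you are attempting to prove is labeled a \emph{conjecture} in the paper, and the paper does not claim a proof. What the paper offers is a single worked verification for $\LG=\SO_7$, $m=2$ (\S\ref{ss:eg of min orbit conj}): there $\Lg(m)=\Lg(2)=\bC E_\gamma$ is one-dimensional, they compute $p(\Ad_{\exp(aE_\gamma)}X)$ explicitly, reduce membership in $\cO_P$ to the vanishing of a degree-four polynomial in $a$ with constant term $-1$, and then check by brute force that if all higher coefficients of this polynomial vanish then $X$ fails to be regular semisimple. Your reduction $p(\Ad_{\exp Y}X)=X_1+[Y,X_{1-m}]$ is exactly what underlies that computation and is correct, so at the level of the one example the paper treats, your approach and the paper's coincide.

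Beyond that example, your proposal is a plan for a general proof that the paper does not possess, and several steps are genuine gaps rather than routine verifications. First, your characterization of $\cO_P$ as the \emph{unique} $\LG(0)$-orbit in $p(\Lg_1^\st)\cap\exp^{-1}(\uu_P)$ is unjustified: distinct $\LG(0)$-orbits in $\Lg(1)$ can lie in the same $\LG$-nilpotent class, and nothing in the paper rules this out. Second, the $G_2$, $m=2$ case in \S\ref{sss:tame monodromy counterexamples}(3) shows that $\cO_P$ need be neither the open orbit nor the minimal one in $p(\Lg_1^\st)$; your affine-slice argument must therefore move $X_1$ both \emph{down} from $\cO_\theta$ and \emph{up} from smaller orbits, and the principal $\Sl_2$ heuristic you invoke gives no control in the latter direction. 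Third, your alternative route through the Hecke-eigensheaf construction runs directly into the obstruction the paper isolates: the descent step of \S\ref{ss:descent} relies on \cite{BL23}, which requires $P$ to arise from a parabolic subgroup of $G$, i.e.\ $s_0>0$ on the $\fg$ side, and the paper explicitly notes that no analogue of the relevant Hecke-category description is known for general parahorics. So this route, far from being ``cleaner'', is blocked by precisely the difficulty that makes the statement a conjecture.
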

	
	In view of the above conjecture, we formulate the following generalization of Theorem \ref{t:main}:
	
	\begin{conj}\label{c:gen main}\
		Let $\theta$ be a stable inner grading of $\Lg$.
			Let $\phi\in V_P^{*,\st}$ be a stable functional. 
			Assume Conjecture \ref{c:0 monodromy s_0=0} is true,
			so that there exists stable vector $X=X(\phi)$ 
			that matches with $\phi$ under \eqref{eq:GIT isom} 
			such that $p(X)\in\cO_P$.
			Then there is an isomorphism of $\LG$-connections: 
			\begin{equation}\label{eq:gen main isom}
				\cE_\phi\simeq\nabla^X.
			\end{equation}
	\end{conj}
	
	\begin{cor}\label{cor:conn form well defn}
		For any $X\in\Lg_1^\st$, 
		then construction of the connection $\nabla^X$ 
		depends only on the $\LG_0$-orbit of $X$ up to isomorphism.
	\end{cor}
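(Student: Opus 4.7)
The plan is to invoke the physical rigidity of $\nabla^{X(\phi)}$ asserted in Conjecture \ref{c:gen main}.(ii), together with the explicit form of $\nabla^X$, to conclude that $\nabla^X$ for an arbitrary $X$ in the orbit of $X(\phi)$ must be isomorphic to $\nabla^{X(\phi)}$. Concretely, if $X'=\Ad_g X$ with $g\in\LG_0$, I will verify that $\nabla^X$ and $\nabla^{X'}$ carry the same formal restrictions at $D_0^\times$ and $D_\infty^\times$; rigidity then delivers the global isomorphism.

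For the restriction at $\infty$ I would apply Proposition \ref{p:theta conn at infty}: the formal class of $\nabla^X|_{D_\infty^\times}$ is determined by the smallest torus whose Lie algebra contains $X$ together with the unique $\bZ$-regular elliptic order $m$ class in the corresponding Weyl group. Both invariants are $\LG$-conjugation invariant, hence depend only on the $\LG_0$-orbit of $X$ since $\LG_0\subset\LG$; this step is routine.

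For the restriction at $0$, which is regular singular with residue $X_1=p(X)$, I would first dispose of the easy case $g\in\LG(0)$ by a constant gauge transformation that sends $\nabla^X$ to $\nabla^{\Ad_g X}$ directly, since $\Ad_g$ preserves the $\bZ$-grading determined by $\clambda$. The substantive case is $g=\exp(Y)\in\exp(\Lg(m))\subset\LG_0$, under which the $\Lg(1)$-component transforms as $X_1\mapsto X_1+[Y,X_{1-m}]$. Using Conjecture \ref{c:0 monodromy s_0=0}.(ii), I would move any representative within its $\LG_0$-orbit so that the projection lies in the prescribed $\LG(0)$-orbit $\cO_P$; once both representatives are so normalized, the residues are $\LG(0)$-conjugate and give the same monodromy class in $\LG$.

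The main obstacle will be confirming that the substitution $X_1\mapsto X_1+[Y,X_{1-m}]$ does not alter the $\LG$-conjugacy class of the monodromy $\exp(X_1)$, so that the normalization step is justified. I expect this to follow from Conjecture \ref{c:0 monodromy s_0=0}.(i), which places $\exp(\cO_P)$ in the Richardson class $\uu_P$, together with stability of $X$ forcing the residue into the dense orbit inside $\fu_{\hat{P}}$. An alternative route is to construct an explicit gauge transformation relating the two local restrictions at $0$, perhaps pulled back to the $m$-fold cover $u^m=t$ and involving the multivalued factor $u^{mY}$ along with a correction term that cancels the $\log u$ monodromy. Once the matching at $0$ is secured, physical rigidity finishes the proof.
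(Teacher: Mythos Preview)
The paper supplies no proof of this corollary; it is simply recorded immediately after Conjecture~\ref{c:gen main}, presumably as an expected consequence of the conjectural package in \S\ref{s:conj for s_0=0}.

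Your plan to invoke physical rigidity is the natural one, and the argument at $\infty$ goes through. The gap you flag at $0$, however, is not a mere technicality to be patched: the hoped-for statement that $X_1\mapsto X_1+[Y,X_{1-m}]$ (for $Y\in\Lg(m)$) preserves the $\LG$-conjugacy class of $X_1$ is \emph{false} in the very example the paper works out. In \S\ref{ss:eg of min orbit conj} one has $p(\Lg_1^{\st})=\cO_\theta\sqcup\cO_P$ with $\cO_\theta$ of Jordan type $(3,3,1)$ and $\cO_P$ of type $(3,2,2)$; the verified instance of Conjecture~\ref{c:0 monodromy s_0=0}.(ii) produces, from a stable $X$ with $p(X)\in\cO_\theta$, an $\exp(\Lg(m))$-conjugate $X'$ with $p(X')\in\cO_P$. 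These lie in distinct nilpotent $\LG$-orbits, so $\nabla^X$ and $\nabla^{X'}$ have different unipotent monodromy at $0$ and hence are \emph{not} isomorphic on $\bGm$. Your normalization step is therefore circular: moving $X$ into $\cO_P$ via $\exp(\Lg(m))$ already changes the isomorphism class of $\nabla^X$, which is exactly what you are trying to rule out. The alternative route via an explicit global gauge transformation cannot succeed either, since any such isomorphism would force the local types at $0$ to agree.

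The upshot is that the corollary, read literally for \emph{all} $X\in\Lg_1^{\st}$, does not follow from Conjectures~\ref{c:0 monodromy s_0=0} and~\ref{c:gen main} as stated, and indeed appears to conflict with the $\SO_7$ computation. It is more plausibly intended either as a further expectation in its own right, or as a statement restricted to those $X$ with $p(X)\in\cO_P$---in which case it follows immediately from Conjecture~\ref{c:gen main}.(i), since every such $\nabla^X$ is isomorphic to the fixed connection $\cE_\phi$.
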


	\subsection{An example of Conjecture \ref{c:0 monodromy s_0=0} for $\LG=\SO_7$}\label{ss:eg of min orbit conj}
	Consider the order $m=2$ stable grading on $\so_7$
	belonging to the fourth class of \cite[Table 12]{RLYG},
	where $s_0=0$. 
	In this case, $\LG(0)\simeq\GL_2\times\SO_3$, $\LG_0\simeq\SO_4\times\SO_3$, and $\Lg(1)\simeq M_{2\times 3}(\bC)$. 
	We can easily verify the following fact:
	\begin{lem}\label{l:two orbits SO7}
		\[
		p(\Lg_1^\st)=\cO_\theta\sqcup\cO_P,\qquad \mathrm{where}\ \exp(\cO_P)\subset\uu_P.
		\]
	\end{lem}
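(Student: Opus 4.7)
The plan is to enumerate the $\GL_2 \times \SO_3$-orbits on $\Lg(1) \simeq M_{2\times 3}(\bC)$, rule out those that cannot occur in $p(\Lg_1^\st)$, and identify the two remaining orbits with $\cO_\theta$ and $\cO_P$ by computing Jordan types and matching to $\uu_P$.

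I would first fix a block decomposition $V = V_+ \oplus V_0 \oplus V_-$ with $\dim V_\pm = 2$ and $\dim V_0 = 3$, and realize $\so_7$ via an anti-diagonal form so that any $X \in \Lg_1$ writes as
\[
X = \begin{pmatrix} 0 & B & 0 \\ D & 0 & F \\ 0 & H & 0 \end{pmatrix},
\]
with $B \in M_{2\times 3}$ and $D \in M_{3\times 2}$ as free parameters, while $F$ and $H$ are linearly determined from $B$ and $D$ by the $\so_7$-constraint. In this model $\Lg(1)$ is the $B$-coordinate and the adjoint action of $\GL_2 \times \SO_3$ is the standard left-right action. A direct count gives five orbits: $\{0\}$, two rank-one orbits distinguished by isotropy of the row span of $B$, and two rank-two orbits distinguished by whether $\ker B \subset V_0$ is isotropic.

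To rule out the first three orbits, I would compute that $X^2$ preserves the decomposition $(V_+ \oplus V_-) \oplus V_0$, with $X^2|_{V_0} = DB + FH$ and $X^2|_{V_+\oplus V_-} = \begin{pmatrix} B \\ H \end{pmatrix} \begin{pmatrix} D & F \end{pmatrix}$, which is a product of a $4\times 3$ and a $3\times 4$ matrix and hence has rank at most three on a four-dimensional space. Regular semisimplicity of $X \in \so_7$ forces its spectrum to be $\{0, \pm\lambda_1, \pm\lambda_2, \pm\lambda_3\}$ with distinct nonzero $\lambda_i$, so $X^2$ has exactly one zero eigenvalue; combined with the above rank bound, this single zero eigenvalue must come entirely from $V_+ \oplus V_-$, and hence $X^2|_{V_0} = DB + FH$ must be invertible. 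If $B = uv^T$ has rank one, then $DB$ has image the line $\bC \cdot Du$, and $F$ is also rank one with image in $\bC \cdot Qv$ (for $Q$ the Gram matrix on $V_0$), so $FH$ lands in the line $\bC \cdot Qv$; hence $\rank(DB + FH) \le 2$, contradicting invertibility. The case $B = 0$ forces $X_1 = 0$, so $X = X_{-1}$ is nilpotent and also excluded. Thus $p(\Lg_1^\st)$ is contained in the union of the two rank-two orbits.

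For the reverse containment I would exhibit explicit stable lifts: take $B_1 = \left(\begin{smallmatrix} 1 & 0 & 0 \\ 0 & 0 & 1 \end{smallmatrix}\right)$ with anisotropic kernel and $B_2 = \left(\begin{smallmatrix} 1 & 0 & 0 \\ 0 & 1 & 0 \end{smallmatrix}\right)$ with isotropic kernel, and in each case produce a $D$ such that the characteristic polynomial of the resulting $X$ has six distinct nonzero roots; since $\Lg_1^\st \subset \Lg_1$ is open, one example per orbit suffices. To identify $\cO_P$, I would compute the Jordan type of the nilpotent $X_1$ (which satisfies $X_1^3 = 0$) from the ranks of $X_1$ and $X_1^2$: the $B_1$-orbit gives partition $(3,3,1)$ and the $B_2$-orbit gives $(3,2,2)$. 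The parahoric $P$ matched to this grading has Levi quotient $L_P = \GL_3$, so $\ell(w_0^{L_P}) = 3$; among the admissible partitions of $7$ for $\so_7$ (those whose even parts occur with even multiplicity), only $(3,2,2)$ satisfies $\dim \mathcal{B}_u = \frac{1}{2}(\dim Z_{\LG}(u) - \rank \LG) = 3$, pinning down $\uu_P$ as the orbit of Jordan type $(3,2,2)$. This matches the $B_2$-orbit, which is therefore $\cO_P$; the $B_1$-orbit is the open orbit $\cO_\theta$. The main obstacle is the explicit regular-semisimplicity verification, which reduces to checking non-vanishing of a specific discriminant polynomial in the entries of $D$ for each representative.
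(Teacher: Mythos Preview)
Your argument is correct and essentially aligned with the paper's, though considerably more detailed: the paper treats this lemma as an ``easily verified'' fact and simply records that the two orbits in $p(\Lg_1^\st)$ are distinguished by whether $\rank(X_1^2)$ equals $1$ or $2$, with the rank-one case having Jordan type $(3,2,2)$, and then cites \cite[\S4.9]{YunEpipelagic} to identify this with $\uu_P$. Your invariant (isotropy of $\ker B\subset V_0$) is equivalent to the paper's (since $X_1^2$ has nonzero block $BF$ with $F=-QB^Ts_2$, so $\rank(X_1^2)=\rank(BQB^T)$, which drops to $1$ exactly when the row span of $B$ is degenerate for $Q$, i.e.\ when $\ker B$ is anisotropic-complement-degenerate, equivalently $\ker B$ isotropic). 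Two points where you genuinely add to the paper: your $X^2|_{V_0}$-invertibility argument cleanly rules out the rank-$\le 1$ orbits, which the paper does not justify at all; and you recover $\uu_P$ intrinsically via $\dim\mathcal{B}_u=\ell(w_0^{L_P})=3$ rather than by table lookup. The only step left implicit in your sketch---exhibiting explicit $D$'s making $X$ regular semisimple over each rank-two $B$---is exactly what the paper's subsequent explicit parameter computation (the matrix with entries $b_1,\dots,b_6$) provides, so there is no gap.
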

	
	Precisely, for $X_1\in p(\Lg_1^\st)$, 
	$X_1$ has rank 4 and $(X_1)^2$ has rank either one or two. 
	The orbit $\cO_\theta$ consists of those whose square have rank two; 
	$\cO_P$ are those whose square have rank one,
	which has the same Jordan type $(2,2,3)$ as $\uu_P$
	given in \cite[\S4.9]{YunEpipelagic}.
	Note that nonzero entries in $\Lg(1)$
	are spanned by entries $\bC E_{ij}$ for $1\leq i\leq 2, 3\leq j\leq 5$
	and $3\leq i\leq 5,6\leq j\leq 7$.
	Thus nonzero entries of $p(\Ad_g X)^2$ support in the 2-by-2 block
	on the upper right corner. 
	Let $q$ be the projection to the 2-by-2 block 
	on the upper right corner. 
	Note that a nonzero 2-by-2 matrix has rank one 
	iff it has vanishing determinant.
	To sum up, we conclude that 
	\[
	\forall X\in\Lg_1^\st,\ p(X)\in\cO_P\ \text{iff}\ \det(q(p(X)^2))=0.
	\]
	
	Observe that $\Lg(m)=\Lg(2)$ consists of the root space of highest root, 
	which we denote by $\gamma$. 
	Explicitly, $\Lg_\gamma$ consists of matrices with nonzero elements 
	in the 2-by-2 block on the upper right corner, 
	where this block is a diagonal matrix of trace zero. 
	Denote by $E_\gamma$ the root vector 
	whose only nonzero entry in the first row equals to $1$. 
	
	The rest of Conjecture \ref{c:0 monodromy s_0=0} 
	amounts to the following statement:
	\[
	\forall X\in\Lg_1^\st\ \text{with}\ p(X)\in\cO_\theta,\
	\exists a\in\bC\ \text{s.t.}\
	\det(q(p(\Ad_{\exp(aE_\gamma)}X)^2))=0.
	\] 
	
	Since the adjoint action of $\LG(0)$ preserves $\Lg(m)$, 
	we may fix $p(X)$. 
	Assume
	\begin{equation}
		X=\begin{pmatrix}
			0&0&1&0&0&0&0\\
			0&0&0&0&1&0&0\\
			-b_6&-b_5&0&0&0&-1&0\\
			-2b_4&-2b_3&0&0&0&0&0\\
			-b_2&-b_1&0&0&0&0&-1\\
			0&0&b_1&b_3&b_5&0&0\\
			0&0&b_2&b_4&b_6&0&0\\
		\end{pmatrix}.
	\end{equation}
	
	For $X$ to be stable, 
	which is equivalent to being regular semisimple, 
	we look at its characteristic polynomial:
	\begin{equation}\label{eq:char poly SO7}
		\det(\lambda I+X)
		=\lambda[\lambda^6-2(b_1+b_6)\lambda^4+((b_1+b_6)^2-4b_2b_5-4b_3b_4)\lambda^2+4((b_1+b_6)b_3b_4-b_5b_4^2-b_2b_3^2)].
	\end{equation}
	For $X$ to be regular semisimple, 
	the above polynomial needs to have one zero root 
	and six distinct nonzero roots.

	On the other hand,
	\[
	q(p(\Ad_{\exp(aE_\gamma)}X)^2)
	=\begin{pmatrix}
		2ab_5+2a^2(b_1b_5+b_3^2)&1+a(b_1-b_6)-a^2(b_1b_6+2b_3b_4+b_2b_5)\\
		1+a(b_1-b_6)-a^2(b_1b_6+2b_3b_4+b_2b_5)&-2ab_2+2a^2(b_2b_6+b_4^2)
	\end{pmatrix}
	\]
	with
	\begin{equation}\label{eq:det 2by2 SO7}
		\begin{split}
			\det(q(p(\Ad_{\exp(aE_\gamma)}X)^2))
			=&[4(b_1b_5+b_3^2)(b_2b_6+b_4^2)-(b_2b_5+2b_3b_4+b_1b_6)^2]a^4\\
			&+[4(b_5(b_2b_6+b_4^2)-b_2(b_1b_5+b_3^2))+2(b_1-b_6)(b_2b_5+2b_3b_4+b_1b_6)]a^3\\
			&+[2(b_2b_5+2b_3b_4+b_1b_6)-4b_2b_5-(b_1-b_6)^2]a^2\\
			&-2(b_1-b_6)a-1.
		\end{split}
	\end{equation}
	
	To verify the conjecture,
	for any given $b_i$'s such that 
	the polynomial \eqref{eq:char poly SO7} has distinct roots, 
	we need to find $a\in\bC$ such that 
	the above polynomial \eqref{eq:det 2by2 SO7} vanishes. 
	Suppose this cannot be done.
	Then all the coefficients of $a^k$ in \eqref{eq:det 2by2 SO7} are zero.
	We show that in this case 
	polynomial \eqref{eq:char poly SO7} will have repeated roots. 
	
	Vanishing of the coefficients of \eqref{eq:det 2by2 SO7} means
	\[
	\begin{cases}
		b_1=b_6,\\
		b_2b_5=b_1^2+2b_3b_4,\\
		b_5b_4^2=b_2b_3^2,\\
		2b_1b_2b_3^2=3b_3^2b_4^2+2b_1^2b_3b_4.
	\end{cases}
	\]
	
	First, it is not hard to see that if $b_4=0$, 
	then we deduce from the above that $b_2b_3^2=0,b_2b_5=b_1^2$. 
	Then \eqref{eq:char poly SO7} becomes $\lambda^5(\lambda^2-4b_1)$,
	 which has repeated zero roots. 
	Thus we may assume $b_4\neq 0$. 
	Similarly, we may assume $b_1\neq 0$. 
	Then we can solve from the above that
	\[
	\begin{cases}
		b_1^2=-\frac{9}{4}b_3b_4,\\
		b_2=-\frac{3b_4^2}{4b_1},\\
		b_5=-\frac{3b_3^2}{4b_1}.
	\end{cases}
	\]
	Take these back into \eqref{eq:char poly SO7}, we obtain
	\[
	\lambda^{-1}\det(\lambda I+X)=(\lambda^2)^3-4b_1(\lambda^2)^2+\frac{16}{3}b_1^2\lambda^2-\frac{64}{27}b_1^3.
	\]
	Take substitution $\lambda^2=b_1t/3$, we obtain the following polynomial:
	\[
	t^3-12t^2+48t-64.
	\]
	Recall that a cubic polynomial $t^3+bt^2+ct+d$ has repeated roots 
	iff its discriminant 
	$\Delta=b^2c^2-4c^3-4b^3d-27d^2+18bcd$ vanishes. 
	This is indeed the case for the above polynomial.  
	This completes the verification of Conjecture \ref{c:0 monodromy s_0=0} 
	in this case.

	\appendix
	\section{Hitchin image}\label{s:appendeix}
	Let $P$ be a parahoric subgroup of the loop group 
	corresponding to a principal grading $\theta$, 
	with Moy-Prasad filtration $P(k)$, $k\geq0$.
	Let $\fp,\fp(k)$ be the Lie algebras of $P,P(k)$.
	As before, we denote $V_P=P(1)/P(2)$ and $L_P=P/P(1)$.
	
	\subsection{Local Hitchin image}
	Let $\fp(k)^\perp\subset\fg(\!(t)\!)\td t$ 
	be the $\cO$-lattice that is orthogonal to $\fp(k)$,
	as in \cite[\S4.3]{Zhu}.
	Consider the local Hitchin map for $\fp(k)$ 
	as in \cite[Proposition 10]{Zhu}:
	\begin{equation}\label{eq:local Hithin map}
		\fp(k)^\perp\ra\Hit(D^\times)\simeq\bigoplus_i\omega_F^{d_i},
	\end{equation}
	where $F=\bC(\!(t)\!)$,
	and the last isomorphism is via invariant polynomials defined by 
	Kostant section $p_{-1}+\sum_i\bC p_i\simeq\fg/\!/G$. 
	In the notation of \S\ref{ss:local opers},
	$\gr\Vac_{\fp(2)}\simeq\Fun\fp(2)^\perp$,
	so that $\gr\fZ_{\fp(2)}$ is isomorphic to 
	the functions on the closure of the image of the above map for $k=2$.
	
	By \cite[Proposition 10]{Zhu}, 
	the above map factors through
	\begin{equation}
		\Hit(D)_{\fp(k)}:=\bigoplus_i\omega_{\cO}^{d_i}(d_i-\lceil\frac{d_i(1-k)}{m}\rceil).
	\end{equation}
	
	In particular, we have
	\begin{equation}\label{eq:Hit_p}
		\Hit(D)_\fp=\bigoplus_i\omega_{\cO}^{d_i}(d_i-\lceil\frac{d_i}{m}\rceil),\quad \Hit(D)_{\fp(2)}=\bigoplus_i\omega_{\cO}^{d_i}(d_i+\lfloor\frac{d_i}{m}\rfloor).
	\end{equation}
	Zhu showed in \cite[Corollary 12, Proposition 13]{Zhu} that 
	for $k=1,2$, $\fp(k)^\perp$ maps onto $\Hit(D)_{\fp(k)}$. 
	Also, it is well known that for $P=I$, 
	$\fI^\perp$ maps onto $\Hit(D)_\fI$.
	
	\begin{prop}\label{p:p local Hitchin image}
	The image closure $\mathrm{Im}(\fp^\perp\ra\Hit(D^\times))$ is $\Hit(D)_\fp$.  
	\end{prop}

	\begin{proof}
		It suffices to make slight modification 
		to the proof of \cite[Proposition 13]{Zhu}. 
		Note that the inclusion of image into $\Hit(D)_\fp$ 
		is known from \cite[Proposition 10]{Zhu}. 
		It remains to show surjectivity.
		
		Since $P$ corresponds to a principal grading $\theta$, 
		there exists a $\theta$-adapted principal $\Sl_2$-triple $\{e,f,h\}$, 
		i.e. $e\in\fg_1, f\in\fg_{-1},h\in\fg_0$. 
		Let $u$ be a $m$-th root of $t$, $t=u^m$. 
		Let $x_P$ be the barycenter of the facet defining $P$, $\eta=m x_P\in X_*(T)$. 
		The adjoint action of $\eta(u)$ gives isomorphism
		\[
		\fp^\perp\simeq\fp(1)\frac{\td t}{t}\simeq\prod_{j\geq 1}\fg_j\otimes u^j\frac{\td u}{u}.
		\]
		Let $q_1,...,q_n$ be a homogeneous basis of $\fg^f$, 
		such that $q_j\in\fg_{-(d_j-1)}$. 
		Recall we define the splitting of Hitchin base using Kostant section. 
		We can write an arbitrary element of $\Hit(D)_\fp$ as
		\[
		\sum_i t^{\lceil\frac{d_i}{m}\rceil-d_i}c_i(t)(\td t)^{d_i}=\sum_i u^{m\lceil\frac{d_i}{m}\rceil}c_i(u^m)(\frac{\td u}{u})^{d_i},\qquad c_i(t)\in\bC[\![t]\!]. 
		\]
		By $e+\fg^f\simeq\fc$, the above element has a preimage 
		$X\frac{\td t}{t}$ under Hitchin map of the form
		\[
		X=e+\sum_i u^{m\lceil\frac{d_i}{m}\rceil}c_i(u^m)q_i.
		\]
		
		Our goal is to find a preimage in $\fp^\perp$. 
		However, $X\frac{\td t}{t}$ is not in this space. 
		It suffices to show that we can conjugate $X$ into $\fp(1)\simeq\prod_{j\geq 1}\fg_j\otimes u^j$. 
		Observe that $h$ has integral eigenvalues on $\fg$, 
		thus it can be given by a cocharacter $\frac{h}{2}:\bGm\rightarrow G^{ad}$. Let $g=\frac{h}{2}(u)\in G^{ad}(\!(u)\!)$.
		Then
		\[
		\Ad_g X=ue+\sum_i u^{m\lceil\frac{d_i}{m}\rceil-d_i+1}c_i(u^m)q_i
		\]
		which belongs to $\prod_{j\geq 1}\fg_j\otimes u^j$.
	\end{proof}
	
	\begin{rem}
		In fact, the image of local Hitchin map 
		has been computed in \cite{BK} 
		when $P$ is of parabolic type and $G$ is either of classical type 
		or of type $G_2$. 
		In \emph{loc. cit.}, 
		the Hitchin map is computed using the characteristic polynomial. 
		Let $m_1\leq m_2\leq\cdots\leq m_n$ be fundamental degrees 
		with multiplicity of Levi $L_P$. 
		Here $1$ is also counted as trivial degree, 
		with possible multiplicity. 
		Then apart from some abnormal cases in type D, 
		the local Hitchin image is as follows:
		\begin{thm}[Theorem 7 \cite{BK}]\label{t:BK parabolic Hitchin image}
			In terms of coefficients of characteristic polynomial, 
			the closure of local Hitchin image is given by $\bigoplus_i\omega_{\cO}^{d_i}(d_i-m_i)$.
		\end{thm}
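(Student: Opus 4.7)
The plan is to analyze $\fp^\perp$ explicitly as a space of matrices with prescribed $t$-adic valuations in block form, and then read off the characteristic-polynomial coefficients directly. For $G$ classical and $P$ of parabolic type, $P$ is the preimage under $G(\cO) \to G$ of a standard parabolic $P_0$ with Levi $L_P$. A direct computation with the residue pairing shows that $\fp^\perp\,\td t^{-1}$ consists of $\fg$-valued Laurent series whose $(i,j)$-block in the block decomposition of $P_0$ lies in $t^{\epsilon_{ij}} M_{n_i \times n_j}[\![t]\!]$, for an explicit $\epsilon_{ij} \in \{0,1\}$ recording whether the block is above or below the block diagonal.

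I would first treat type $A$ and then extend. Applying the Leibniz expansion of $\det(\lambda I - X)$ to a generic $X \in \fp^\perp\,\td t^{-1}$ shows that the $r$-th coefficient lies in $t^{d_r - m_r}\cO$, where the key combinatorial identity is that the minimal total $t$-exponent over all cycle decompositions contributing to the $r$-th coefficient equals $d_r - m_r$. Here $m_r$ is recognized as the $r$-th fundamental degree (with multiplicity) of $L_P = \prod_j \GL_{n_j}$ via the standard bijection between cycles fitting inside the block structure and monomials in the invariants of $L_P$. This produces the inclusion of the image of $\fp^\perp$ in $\bigoplus_i \omega_\cO^{d_i}(d_i - m_i)$. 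For surjectivity onto the closure, I would adapt the gauge-transformation argument of Proposition \ref{p:p local Hitchin image}: lift a given target via the Kostant section, then conjugate by a cocharacter associated to $P_0$ to move the lift into $\fp^\perp$; the block-valuation conditions match by the same combinatorial count read in reverse.

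For types $B$, $C$, and $G_2$, the same outline goes through once the bilinear-form symmetries are imposed on the blocks and the characteristic polynomial is replaced by its even-degree coefficients (and, for $G_2$, one works with the seven-dimensional representation); the fundamental degrees of the Levi are explicit in each case. The hardest step will be handling the abnormal cases in type $D$, alluded to in the statement: when $L_P$ contains a factor of even orthogonal type, the Pfaffian is an extra generator of the invariants beyond the characteristic polynomial, so the naive characteristic-polynomial formulation must be augmented by a separate analysis of the Pfaffian coordinate. A secondary bookkeeping difficulty is verifying uniformly across partitions that the minimal exponent coming from the Leibniz expansion equals $d_r - m_r$; this ultimately reduces to an interlacing identity between the fundamental degrees of $G$ and of $L_P$, but the case-by-case check is not automatic.
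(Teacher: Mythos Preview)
This theorem is not proved in the paper at all: it is quoted as Theorem~7 of the external reference \cite{BK} (Baraglia--Kamgarpour), stated inside a remark following Proposition~\ref{p:p local Hitchin image}. The paper does not supply its own argument for it; rather, it uses the cited result, together with the observation that for types $A$, $B$, $C$, $G_2$ the Kostant section of $G$ embeds into that of $\GL_N$ under the standard representation, to give an alternative case-by-case verification of Proposition~\ref{p:p local Hitchin image} by checking the combinatorial identity $m_i=\lceil d_i/m\rceil$ against the tables in \cite[\S7]{RLYG}.

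So there is no proof in the paper to compare your proposal against. Your outline is a plausible sketch of how the result in \cite{BK} might be established---the block-valuation description of $\fp^\perp$ and the Leibniz-expansion bound are the natural starting point---but assessing its correctness or completeness would require comparison with \cite{BK} itself, not with the present paper. If your goal was to supply a proof the paper omits, you should be aware that the paper treats this as a black-box citation and does not attempt to reprove it.
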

		
		Observe that when $G$ is of type $A,B,C,G_2$, 
		under the standard representation, 
		the Kostant section of $G$ embeds into that of $\GL_N$. 
		Moreover, for a vector $X=p_{-1}+\sum_ia_ip_i$ 
		in the Kostant section of $\gl_N$, we have
		\[
		\det(\lambda I+X)=\lambda^N+\sum_ic_ia_i\lambda^{N-d_i},
		\]
		where $c_i$ are nonzero constants. 
		Thus the Hitchin image in terms of Kostant sections 
		or characteristic polynomials 
		are the same in these situations. 
		
		In view of the above discussion, 
		when $G$ is of type $A,B,C,G_2$ and $P$ is a parabolic preimage, 
		to verify Proposition \ref{p:p local Hitchin image}, 
		it suffices to check $m_i=\lceil\frac{d_i}{m}\rceil$, 
		which can be done case-by-case using the tables in \cite[\S7]{RLYG}.
		
		The above might be true for any principal parahoric subgroup. 
		But beyond this it should not hold: 
		the Hitchin image in the conjecture depends only on the order $m$, 
		but there are parahorics with the same order $m$ 
		whose Levi quotients have different set of fundamental invariants, 
		thus have different Hitchin image 
		in the case of Theorem \ref{t:BK parabolic Hitchin image}.  
	\end{rem}
	
	\subsection{Global Hitchin base}
	The above calculation works equally when we replace $\fp$ with 
	the Lie algebra $\fp^\opp$ of the opposite parahoric $P^\opp$.
	Define $\Hit(D_0)_{\fp^\opp}$ in the same way as for $\fp$.
	Consider the following global Hitchin base:
	\begin{equation}\label{eq:global Hit}
		\Hit_\cG(\bP^1):=\Hit(\bGm)\times_{\Hit(D_0^\times)}\Hit(D_0)_{\fp^\opp}\times_{\Hit(D_\infty^\times)}\Hit(D_\infty)_{\fp(2)}.
	\end{equation}
	
	Denote 
	\begin{equation}\label{eq:S_m}
		S_m:=\{1\leq i\leq n \ |\ d_i\equiv 0\!\!\!\mod m\}.
	\end{equation}
	
	Combining Proposition \ref{p:p local Hitchin image} 
	and \cite[Proposition 13]{Zhu}, 
	we get:
	\begin{equation}
		\begin{split}
			\Hit_\cG(\bP^1)
			&=\bigoplus_i\Gamma(\bP^1,\omega^{d_i}((d_i-\lceil\frac{d_i}{m}\rceil)\cdot 0+(d_i+\lfloor\frac{d_i}{m}\rfloor)\cdot\infty))\\
			&\simeq\bigoplus_i\Gamma(\bP^1,\cO(\lfloor\frac{d_i}{m}\rfloor-\lceil\frac{d_i}{m}\rceil))\simeq\bA^{|S_m|}.
		\end{split}
	\end{equation}
	
	Moreover, recall from \cite[Proposition 14]{Zhu} 
	that we have surjective morphism
	\[
	\Hit(D_\infty)_{\fp(2)}\ra\bigoplus_{i\in S_m}\omega_{\cO}^{d_i}(d_i+\frac{d_i}{m})/\omega_{\cO}^{d_i}(d_i+\frac{d_i}{m}-1)\simeq V_P^*/\!/L_P.
	\]
	Combining the  discussion above, 
	we obtain the following corollary of Proposition \ref{p:p local Hitchin image}:
	
	\begin{cor}\label{c:global Hit=V//L}
		The composition $\Hit_\cG(\bP^1)\ra\Hit(D_\infty)_{\fp(2)}\ra V_P^*/\!/L_P$ is an isomorphism.
	\end{cor}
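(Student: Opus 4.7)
The plan is a dimension count paired with a direct check of surjectivity on leading-order coefficients at $\infty$. First, I would record that both the source and target are affine spaces of dimension $|S_m|$. For the source, formula \eqref{eq:global Hit} already displays $\Hit_\cG(\bP^1)\simeq\bigoplus_i\Gamma(\bP^1,\cO(\lfloor d_i/m\rfloor-\lceil d_i/m\rceil))$, in which the $i$-th summand contributes $\bC$ when $m\mid d_i$ and $0$ otherwise, so the total dimension is $|S_m|$. For the target, the proof of Lemma~\ref{l:GIT isom} identifies $V_P^*/\!/L_P$ with the closed subspace of $\cfc$ cut out by the vanishing of the generators whose degree is \emph{not} divisible by $m$, hence $\dim V_P^*/\!/L_P=|S_m|$ as well.

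It therefore suffices to show that the linear map in question is surjective onto each coordinate direction corresponding to a $d_i\in S_m$. For such $d_i$, a nonzero global section in the $i$-th summand of \eqref{eq:global Hit} can be written in the coordinate $t$ as $\sigma_i=c\,t^{\lceil d_i/m\rceil-d_i}(dt)^{d_i}=c\,t^{d_i/m-d_i}(dt)^{d_i}$ for some $c\in\bC$. Changing to $s=t^{-1}$ and using $dt=-s^{-2}ds$, one computes
\[
\sigma_i=\pm c\,s^{-d_i-d_i/m}(ds)^{d_i},
\]
which lies in $\omega_{\cO_\infty}^{d_i}(d_i+d_i/m)$ with leading coefficient $\pm c$. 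Under the surjection $\Hit(D_\infty)_{\fp(2)}\twoheadrightarrow\bigoplus_{d_i\in S_m}\omega_{\cO}^{d_i}(d_i+d_i/m)/\omega_{\cO}^{d_i}(d_i+d_i/m-1)\simeq V_P^*/\!/L_P$ recalled from \cite[Proposition 14]{Zhu}, the class of $\sigma_i$ maps to $\pm c$ in the $d_i$-th coordinate and to $0$ in the other coordinates. Varying $c$ and $d_i\in S_m$ exhibits the full target.

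Thus the composition is a surjective linear map between vector spaces of dimension $|S_m|$, hence an isomorphism. No serious obstacle arises: the bookkeeping reduces entirely to the pole-order calculation above, and the identifications of both sides with $\bA^{|S_m|}$ are already in hand from \eqref{eq:global Hit} and Lemma~\ref{l:GIT isom}. The mildest subtlety is simply to track carefully the change of coordinate $t\mapsto s=t^{-1}$ and the resulting exponent of the leading term, which must match the cut-off $d_i+d_i/m$ used in the definition of $\Hit(D_\infty)_{\fp(2)}$.
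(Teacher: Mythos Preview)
Your proof is correct and is essentially the paper's own argument made explicit: the paper simply says the corollary follows by ``combining the discussion above,'' meaning \eqref{eq:global Hit} and \cite[Proposition 14]{Zhu}, and you have written out the coordinate-change computation that verifies surjectivity onto each $d_i\in S_m$ coordinate. The only detail worth noting is that the composition is indeed linear (both restriction to $D_\infty$ and passage to leading coefficients are $\bC$-linear on these spaces of sections), so the dimension count plus surjectivity does give an isomorphism as you claim.
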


	\subsection{Invariant functions}
	We first establish a lemma on principal grading.
	Let $\fg=\oplus_i\fg_i$ be a principal grading of order $m$, 
	$G_0\subset G$ the subgroup with Lie algebra $\fg_0$. 
	Let $\{e,f,h\}$ be a $\theta$-adapted principal $\Sl_2$-triple as before. 
	Let $q_i$ be a homogeneous basis of $\fg^f$. 
	Denote $\fk_1:=e+\sum_{m|d_i}\bC q_i=(e+\fg^f)\cap\fg_1$.
	\begin{lem}\label{l:principal grading}
		\begin{itemize}
			\item [(i)] For any $X\in e+\fg^f$, we have
			\begin{equation}\label{eq:Kos decomp}
				[\fg,X]\oplus\fg^f=\fg.
			\end{equation}
			
			\item [(ii)] For any $i$ and $X_1\in\fk_1$, we have
			\begin{equation}\label{eq:g_i decomp}
				[\fg_{i-1},X_1]\oplus\fg_i^f=\fg_i.
			\end{equation}
		\end{itemize}
	\end{lem}
	\begin{proof}
		(i): Note that for regular element $X\in e+\fg^f$, $\dim[\fg,X]+\dim\fg^f=\fg$. Thus it suffices to show $[\fg,X]\cap\fg^f=0$. Consider morphism
		$$
		\gamma:G\times(e+\fg^f)\rightarrow\fg\xrightarrow{\pi}\fg/\!\!/G,\quad (g,Y)\mapsto\Ad_g Y\mapsto\pi(\Ad_g Y)=\pi(Y).
		$$	
		The tangent map at $(1,X)$ is
		$$
		d\gamma_{(1,X)}:\fg\times\fg^f\rightarrow\fg\rightarrow T_{\pi(X)}(\fg/\!\!/G),\quad d\gamma_{(1,X)}(Y,Z)=d\pi([Y,X]+Z).
		$$
		For any $A\in[\fg,X]\cap\fg^f$, assume $A=[Y,X]\in\fg^f$. Then $d\gamma_{(1,X)}(Y,-A)=d\pi(A-A)=0$.
		
		On the other hand, since $\gamma$ is constant on $G$ and is an isomorphism when $g\in G$ is fixed, we have $d\gamma_{(1,X)}(Y,0)=0$ and $d\gamma_{(1,X)}|_{(0,\fg^f)}$ is bijective. So $d\gamma_{(1,X)}(Y,-A)=d\gamma_{(1,X)}(0,-A)=0$ implies $A=0$. Therefore $[\fg,X]\cap\fg^f=0$.   
		
		(ii): \eqref{eq:g_i decomp} is immediate from letting $X=X_1$ in \eqref{eq:Kos decomp} and taking $\zeta_i$-eigenspace of $\theta$.
	\end{proof}

	\begin{prop}\label{p:inv poly on p(1)}
		Let $P$ be a principal parahoric subgroup associated to the inner principal grading of order $m$.
        The natural restriction 
        $(\Fun\fg(\cO))^{G(\cO)}\twoheadrightarrow(\Fun\fp(1))^P$ 
        is surjective. 
        As a result, the following embedding is an isomorphism:
			\begin{equation}\label{eq:Fun p(1)^P}
				\Fun\Hit(D)_\fp\xrightarrow{\sim}(\Fun\fp^\perp)^P\simeq(\Fun\fp(1))^P,
			\end{equation}
			where the second isomorphism is given by $\fp^\perp\simeq\fp(1)\frac{dt}{t}$ induced by residue pairing and Killing form.
	\end{prop}

	\begin{rem}\mbox{}	
		For $P=G(\cO)$ or Iwahori subgroup $I$, 
		the above is proved in \cite[Theorem 3.4.2, Lemma 8.1.1]{FrenkelBook}.
		The method of our proof is completely different. 
	\end{rem}
	
	\begin{proof}
		We follow the notation in the proof of Proposition \ref{p:p local Hitchin image}. Recall from there that we have the following Kostant section:
		$$
		(ue+\sum_i u^{m\lceil\frac{d_i}{m}\rceil-d_i+1}c_i(u^m)q_i)\frac{du}{u}\hookrightarrow
		\fp(1)\frac{du}{u}\simeq(\prod_{j\geq 1}\fg_j\otimes u^j)\frac{du}{u}\twoheadrightarrow
		\Hit(D)_{\fp}.
		$$
		The above composition is an isomorphism.
		Denote
		\begin{equation}\label{eq:fk fk_1}
			\fk:=ue+\sum_i u^{m\lceil\frac{d_i}{m}\rceil-d_i+1}c_i(u^m)q_i,\quad \fk_1=e+\sum_{m|d_i}\bC q_i\subset\fg_1,\quad \fk_i=\sum_{m|i+d_j-1}\bC q_j=\fg_i\cap\fg^f.
		\end{equation}
		Thus the composition of the following is isomorphic:
		$$
		\Fun\Hit(D)_\fp\rightarrow(\Fun\fp(1)\frac{du}{u})^P\rightarrow\Fun\fk\frac{du}{u}.
		$$
		Therefore the second map in above given by restriction to $\fk$ is surjective. To prove part (i) of the proposition, it remains to show the second map is injective, for which it suffices to show the conjugation subspace $\Ad_P\fk$ is dense in $\fp(1)$. To this end, we show the following:
		
		\emph{Claim}: For any $i\geq 1$, $(\Ad_{L_P}\fk_1+\fp(2))/\fp(i+1)\subset(\Ad_P\fk+\fp(i+1))/\fp(i+1)$ in $\fp(1)/\fp(i+1)$. 
		
		By \cite[Theorem 3.6]{Panyushev}, 
		$\Ad_{L_P}\fk_1$ gives all the regular elements in $\fg_1$,
		thus is open dense in $\fg_1$.
		Therefore $\Ad_{L_P}\fk_1+\fp(2)$ is open dense in $\fp(1)$.
		The denseness of $\Ad_P\fk\subset\fp(1)$ follows from the claim. 
		
		We prove the claim by induction. When $i=1$, it is trivial.
		
		Assume the claim for index smaller than $i$. 
		For $i$, given any element $Y\in(\Ad_{L_P}\fk_1+\fp(2))/\fp(i+1)$, 
		write $Y=Y_{\leq i-1}+Y_i$ 
		where $Y_i\in u^i\fg_i$, $Y_{\leq i-1}$ 
		is the complement in section of $\fp(1)/\fp(i)$. 
		By induction hypothesis, 
		there exists $X\in\fk$ and $g=g_0g_{i-1}\in P=L_P P(1)$, 
		$g_0\in L_P, g_{i-1}\in P(1)$, 
		such that $\Ad_g X=Y_{\leq i-1}+Y_i'$ for some $Y_i'\in\fp(i)/\fp(i+1)$. Write $X=X_1+X_{>1}$ where $X_{>1}\in\fp(2)$, $X_1\in\fk_1 u$. Let $h=\exp(Z)\in P(i-1)$ where $Z\in u^{i-1}\fg_{i-1}$. 
	    It suffices to find $Z$ so that 
	    $Y\in\Ad_h\Ad_g(X+u^i\fk_i )
	    =\Ad_h\Ad_gX+\Ad_{g_0}\fk_i u^i\subset\fp(1)/\fp(i+1)$, 
	    for which it suffices to find $Z\in u^{i-1}\fg_{i-1}\simeq\fg_{i-1}$
	    such that
	    $Y_i\in Y_i'+[Z,\Ad_{g_0}X_1]+\Ad_{g_0}\fk_i\subset\fg_i$.
	    This follows from the conjugation by $g_0\in G_0$ of \eqref{eq:g_i decomp}:
		\begin{equation}\label{eq:recursive eq in g_i}
			[\fg_{i-1},\Ad_{g_0}X_1]+\Ad_{g_0}\fk_i=\fg_i,\quad \forall X_1\in\fk_1,g_0\in G_0.
		\end{equation}
		The claim is proved.
	\end{proof}
	
	\begin{rem}
		In the above proof, when $P=I$, the dense subspace $\Ad_P\fk\subset\fp(1)$ can be alternatively described as follow. Let $\fn^\reg\subset\fn$ be the regular nilpotent Borel orbit in the nilpotent radical $\fn\subset\fb$, then $\fk\subset\fI(1)^\circ=\fn^\reg+t\fg[\![t]\!]=\fI(1)\cap J\fg^\reg$, and $\Ad_I\fk\subset\fI(1)^\circ$. Also, it has been proved in \cite[Lemma 8.1.1]{FrenkelBook} that fibers of $\fI(1)^\circ$ under local Hitchin morphism consist of a single $I$-orbit. As each fiber contains a unique element of the Kostant section $\fk$, the opposite inclusion follows.
		
		We conjecture that for $P\subset G(\cO)$, similar description of $\Ad_P\fk\subset\fp(1)$ can be given by replacing $\fn^\reg$ with the Richardson class.
	\end{rem}

	\begin{bibdiv}
		\begin{biblist}

			\bib{Arinkin}{article}
			{
				title={Irreducible connections admit generic oper structures}, 
				author={Arinkin, D.},
				year={2016},
				eprint={1602.08989},
				archivePrefix={arXiv},
				primaryClass={math.AG}
			}

			\bib{BV}{article}
			{
				AUTHOR = {Babbitt, D. G.},
				Author = {Varadarajan, V. S.},
				TITLE = {Formal reduction theory of meromorphic differential equations:
					a group theoretic view},
				JOURNAL = {Pacific J. Math.},
				FJOURNAL = {Pacific Journal of Mathematics},
				VOLUME = {109},
				YEAR = {1983},
				NUMBER = {1},
				PAGES = {1--80},
			}

			\bib{BC}{article}
			{
				AUTHOR = {Bala, P.},
				Author = {Carter, R. W.},
				TITLE = {Classes of unipotent elements in simple algebraic groups. {I}},
				JOURNAL = {Math. Proc. Cambridge Philos. Soc.},
				FJOURNAL = {Mathematical Proceedings of the Cambridge Philosophical
					Society},
				VOLUME = {79},
				YEAR = {1976},
				NUMBER = {3},
				PAGES = {401--425},
			}

			\bib{BD}{article}
			{
				AUTHOR = {Beilinson, A.},
				Author = {Drinfeld, V.},
				TITLE  = {Quantization of Hitchin's integrable system and Hecke eigensheaves},
				Note = {\url{https://www.math.uchicago.edu/\textasciitilde mitya/langlands/hitchin/BD-hitchin.pdf}},
				Year={1997},
			}

			\bib{BK}{article}
			{
				AUTHOR = {Baraglia, D.},
				Author = {Kamgarpour, M.},
				TITLE = {On the image of the parabolic {H}itchin map},
				JOURNAL = {Q. J. Math.},
				FJOURNAL = {The Quarterly Journal of Mathematics},
				VOLUME = {69},
				YEAR = {2018},
				NUMBER = {2},
				PAGES = {681--708},
			}

			\bib{BL23}{article}
			{
				AUTHOR = {Bezrukavnikov, R.},
				Author = {Losev, I.},
				TITLE = {Dimensions of modular irreducible representations of
					semisimple {L}ie algebras},
				JOURNAL = {J. Amer. Math. Soc.},
				FJOURNAL = {Journal of the American Mathematical Society},
				VOLUME = {36},
				YEAR = {2023},
				NUMBER = {4},
				PAGES = {1235--1304},
				ISSN = {0894-0347,1088-6834},
				MRCLASS = {17B20 (17B35 17B50)},
				MRNUMBER = {4618958},
				MRREVIEWER = {Christopher\ P.\ Bendel},
				DOI = {10.1090/jams/1017},
				URL = {https://doi.org/10.1090/jams/1017},
			}

			\bib{Chen}{article}
			{
				AUTHOR = {Chen, T.-H.},
				TITLE = {Vinberg's {$\theta$}-groups and rigid connections},
				JOURNAL = {Int. Math. Res. Not. IMRN},
				FJOURNAL = {International Mathematics Research Notices. IMRN},
				YEAR = {2017},
				NUMBER = {23},
				PAGES = {7321--7343},
			}

			\bib{FrenkelBook}{article}
			{
				AUTHOR = {Frenkel, E.},
				TITLE = {Langlands correspondence for loop groups},
				SERIES = {Cambridge Studies in Advanced Mathematics},
				VOLUME = {103},
				PUBLISHER = {Cambridge University Press, Cambridge},
				YEAR = {2007},
				PAGES = {xvi+379},
			}

			\bib{FFR}{article}
			{
				AUTHOR = {Feigin, B.},
				Author ={Frenkel, E.},
				Author ={Rybnikov, L.},
				TITLE = {On the endomorphisms of {W}eyl modules over affine
					{K}ac-{M}oody algebras at the critical level},
				JOURNAL = {Lett. Math. Phys.},
				FJOURNAL = {Letters in Mathematical Physics},
				VOLUME = {88},
				YEAR = {2009},
				NUMBER = {1-3},
				PAGES = {163--173},
				ISSN = {0377-9017},
			}

			\bib{FGLocal}{article}
			{
				AUTHOR ={Frenkel, E.},
				Author ={Gaitsgory, D.},
				TITLE = {Local geometric {L}anglands correspondence and affine
					{K}ac-{M}oody algebras},
				BOOKTITLE = {Algebraic geometry and number theory},
				SERIES = {Progr. Math.},
				VOLUME = {253},
				PAGES = {69--260},
				PUBLISHER = {Birkh\"{a}user Boston, Boston, MA},
				YEAR = {2006},
			}

			\bib{FGr}{article}
			{
				AUTHOR = {Frenkel, E.},
				Author = {Gross, B.},
				TITLE = {A rigid irregular connection on the projective line},
				JOURNAL = {Ann. of Math. (2)},
				VOLUME = {170},
				YEAR = {2009},
				NUMBER = {3},
				PAGES = {1469--1512},
			}

			\bib{FuGuEuler}{article}
			{
				title={Euler characteristics of the generalized Kloosterman sheaves for symplectic and orthogonal groups}, 
				author={Fu, Y.},
				author={Gu, M.},
				year={2024},
				eprint={2407.19700},
				archivePrefix={arXiv},
				primaryClass={math.NT},
				url={https://arxiv.org/abs/2407.19700}, 
			}

			\bib{HNY}{article}
			{
				Author = {Heinloth, J.},
				Author = {Ng\^{o}, B. C.},
				Author = {Yun, Z.},
				Title={Kloosterman sheaves for reductive groups}, 
				Year={2013}, 
				Journal={Ann. of Math. (2)},
				Volume={177},
				Pages={241--310},
			}

			\bib{HJrigid}{article}
			{
				title={Stokes phenomenon of Kloosterman and Airy connections}, 
				author={Hohl, A.},
				author={Jakob, K.},
				year={2026},
				eprint={2404.09582},
				archivePrefix={arXiv},
				primaryClass={math.AG},
				url={https://arxiv.org/abs/2404.09582}, 
				}

			\bib{JKY}{article}
			{
				AUTHOR = {Jakob, K.},
				Author = {Kamgarpour, M.},
				Author = {Yi, L.},
				TITLE = {Airy sheaves for reductive groups},
				JOURNAL = {Proc. Lond. Math. Soc. (3)},
				FJOURNAL = {Proceedings of the London Mathematical Society. Third Series},
				VOLUME = {126},
				YEAR = {2023},
				NUMBER = {1},
				PAGES = {390--428},
				ISSN = {0024-6115,1460-244X},
				MRCLASS = {14D24 (20G25 22E50 22E67)},
				MRNUMBER = {4535023},
				MRREVIEWER = {Jorge\ A.\ Vargas},
				DOI = {10.1112/plms.12494},
				URL = {https://doi.org/10.1112/plms.12494},
			}

			\bib{JYDeligneSimpson}{article}
			{
				title={A Deligne-Simpson problem for irregular $G$-connections over $\mathbb{P}^{1}$}, 
				author={Jakob, K.},
				Author={Yun, Z.},
				year={2023},
				eprint={2301.10967},
				archivePrefix={arXiv},
				primaryClass={math.AG},
				url={https://arxiv.org/abs/2301.10967}, 
			}

			\bib{Kaletha}{article}
			{
				AUTHOR = {Kaletha, T.},
				TITLE = {Epipelagic {$L$}-packets and rectifying characters},
				JOURNAL = {Invent. Math.},
				FJOURNAL = {Inventiones Mathematicae},
				VOLUME = {202},
				YEAR = {2015},
				NUMBER = {1},
				PAGES = {1--89},
				ISSN = {0020-9910,1432-1297},
				MRCLASS = {11S37 (11F66 22E50)},
				MRNUMBER = {3402796},
				MRREVIEWER = {Anton\ Deitmar},
				DOI = {10.1007/s00222-014-0566-4},
				URL = {https://doi.org/10.1007/s00222-014-0566-4},
			}

			\bib{KSRigid}{article}
			{
				AUTHOR = {Kamgarpour, M.},
				Author = {Sage, D.S.},
				TITLE = {Rigid connections on {$\Bbb P^1$} via the {B}ruhat-{T}its
					building},
				JOURNAL = {Proc. Lond. Math. Soc. (3)},
				FJOURNAL = {Proceedings of the London Mathematical Society. Third Series},
				VOLUME = {122},
				YEAR = {2021},
				NUMBER = {3},
				PAGES = {359--376},
				ISSN = {0024-6115,1460-244X},
				MRCLASS = {14D24 (14D05 20G25 22E67)},
				MRNUMBER = {4230058},
				MRREVIEWER = {Walter\ D.\ Freyn},
				DOI = {10.1112/plms.12346},
				URL = {https://doi.org/10.1112/plms.12346},
			}

		    \bib{KSGiff}{article}
		    {
		    	title={Differential Galois groups of $G$-connections with Coxeter singularities}, 
		    	AUTHOR = {Kamgarpour, M.},
		    	Author = {Sage, D.S.},
		    	year={2026},
		    	eprint={2309.11742},
		    	archivePrefix={arXiv},
		    	primaryClass={math.AG},
		    	url={https://arxiv.org/abs/2309.11742}, 
		    }

			\bib{KY}{article}
			{
				Author ={Kamgarpour, M.},
				Author ={Yi, L.},
				Title ={Geometric Langlands for Hypergeometric sheaves},
				JOURNAL ={Trans. Amer. Math. Soc.}
				Volume={374},
				Year ={2021},
				Pages={8435--8481},
			}

			\bib{KXY}{article}
			{
				title={Hypergeometric sheaves for classical groups via geometric Langlands}, 
				author={Kamgarpour, M.},
				Author={Xu, D.},
				Author={Yi, L.},
				year={2022},
				eprint={2201.08063},
				archivePrefix={arXiv},
				primaryClass={math.AG}
			}

			\bib{KatzDGal}{article}
			{
				AUTHOR = {Katz, Nicholas M.},
				TITLE = {On the calculation of some differential {G}alois groups},
				JOURNAL = {Invent. Math.},
				FJOURNAL = {Inventiones Mathematicae},
				VOLUME = {87},
				YEAR = {1987},
				NUMBER = {1},
				PAGES = {13--61},
			}

			\bib{RY}{article}
			{
				Author = {Reeder, M.},
				Author = {Yu, J.},
				TITLE = {Epipelagic representations and invariant theory},
				JOURNAL = {J. Amer. Math. Soc.},
				FJOURNAL = {Journal of the American Mathematical Society},
				VOLUME = {27},
				YEAR = {2014},
				NUMBER = {2},
				PAGES = {437--477},
			}

			\bib{RLYG}{article}
			{
				AUTHOR= {Reeder, M.},
				Author={Levy, P.},
				Author={Yu, J.},
				Author={Gross, B.},
				TITLE = {Gradings of positive rank on simple {L}ie algebras},
				JOURNAL = {Transform. Groups},
				FJOURNAL = {Transformation Groups},
				VOLUME = {17},
				YEAR = {2012},
				NUMBER = {4},
				PAGES = {1123--1190},	
			}

			\bib{Panyushev}{article}
			{
				AUTHOR = {Panyushev, D. I.},
				TITLE = {On invariant theory of $\theta$-groups},
				JOURNAL = {Journal of Algebra},
				FJOURNAL = {Journal of Algebra},
				volume={283},
				number={2},
				pages={655--670},
				year={2005},
			}

			\bib{Vinberg}{article} 
			{ 
				AUTHOR = {Vinberg, E. B.},
				TITLE = {The {W}eyl group of a graded {L}ie algebra},
				JOURNAL = {Izv. Akad. Nauk SSSR Ser. Mat.},
				FJOURNAL = {Izvestiya Akademii Nauk SSSR. Seriya Matematicheskaya},
				VOLUME = {40},
				YEAR = {1976},
				NUMBER = {3},
				PAGES = {488--526, 709},
			}

			\bib{XuYi}{article}
			{
				title={Frobenius structure on rigid connections and arithmetic applications}, 
				author={Xu, D.},
				author={Yi, L.},
				year={2026},
				eprint={2603.09252},
				archivePrefix={arXiv},
				primaryClass={math.NT},
				url={https://arxiv.org/abs/2603.09252}, 
			}

			\bib{XuZhu}{article}
			{
				Author = {Xu, D.},
				Author = {Zhu, X.},
				title  = {Bessel $F$-isocrystals for reductive groups},
				year={2022},
				Journal={Invent. math.}
			}

			\bib{YiFG}{article}
			{
				title={On the physically rigidity of Frenkel-Gross connection}, 
				author={Yi, L.},
				year={2022},
				eprint={2201.11850},
				archivePrefix={arXiv},
				primaryClass={math.AG}
			}

			\bib{YiToral}{article}
			{
				title={An explicit local geometric Langlands for supercuspidal representations: the toral case}, 
				author={Yi, L.},
				year={2025},
				eprint={2506.13179},
				archivePrefix={arXiv},
				primaryClass={math.RT},
				url={https://arxiv.org/abs/2506.13179}, 
			}

			\bib{YunEpipelagic}{article}
			{
				AUTHOR = {Yun, Z.},
				Title={Epipelagic representations and rigid local systems},
				Journal={Selecta Math. (N.S.)}, 
				Year={2016},
				pages={1195--1243},
				Volume={22}, 
			}

			\bib{YunKL}{article}
			{
				AUTHOR = {Yun, Z.},
				TITLE = {Minimal reduction type and the {K}azhdan-{L}usztig map},
				JOURNAL = {Indag. Math. (N.S.)},
				FJOURNAL = {Koninklijke Nederlandse Akademie van Wetenschappen.
					Indagationes Mathematicae. New Series},
				VOLUME = {32},
				YEAR = {2021},
				NUMBER = {6},
				PAGES = {1240--1274}
			}

			\bib{Zhu}{article}
			{
				AUTHOR = {Zhu, X.},
				TITLE = {Frenkel-{G}ross' irregular connection and
					{H}einloth-{N}g\^{o}-{Y}un's are the same},
				JOURNAL = {Selecta Math. (N.S.)},
				FJOURNAL = {Selecta Mathematica. New Series},
				VOLUME = {23},
				YEAR = {2017},
				NUMBER = {1},
				PAGES = {245--274},
			}
			
		\end{biblist}
	\end{bibdiv} 
	
\end{document}